\newcommand{\real}{\mathbb{R}}
\newcommand{\RR}{\mathbb{R}}
\newcommand{\R}{\mathbb{R}}
\newcommand{\cN}{{\cal N}}
\newcommand{\Dx}{\Delta x}
\newcommand{\Dv}{\Delta v}
\newcommand{\Dlambda}{\Delta \lambda}
\newcommand{\Ds}{\Delta s}
\newcommand{\epspf}{\epsilon_{\rm pf}}
\newcommand{\epsfoc}{\epsilon_{\rm foc}}
\newcommand{\epssoc}{\epsilon_{\rm soc}}
\newcommand{\epscs}{\epsilon_{\rm cs}}
\newcommand{\epspd}{\epsilon_{\rm pd}}
\newcommand{\alphac}{\zeta}
\mathchardef\mhyphen="2D % Define a "math hyphen"
\DeclarePairedDelimiter{\norm}{\lVert}{\rVert}
\DeclareMathOperator{\nullspace}{null}
\DeclareMathOperator{\diag}{diag}
\theoremstyle{remark}
\newtheorem{remark}{Remark}[section]
\newcommand{\Amap}{\mathcal{A}}
\newcommand{\dm}{d}
\newcommand{\twonorm}[1]{\left\|#1\right\|_2}
\newcommand{\onenorm}[1]{\left\|#1\right\|_{1}}
\newcommand{\infnorm}[1]{\left\|#1\right\|_{\infty}}
\newcommand{\fronorm}[1]{\left\|#1\right\|_F}
\let\originalleft\left
\let\originalright\right
\renewcommand{\left}{\mathopen{}\mathclose\bgroup\originalleft}
\renewcommand{\right}{\aftergroup\egroup\originalright}
\newcommand{\cA}{{\cal A}}
\newcommand{\cI}{{\cal I}}
\newcommand{\cJ}{{\overline{\cal I}}}
\newcommand{\cE}{{\cal E}}
\newcommand{\cD}{{\cal D}}
\newcommand{\cL}{{\cal L}}
\newcommand{\JSSV}{J_{\text{SSV}}}
\newcommand{\editContent}[1]{\color{black} #1 \color{black}}
\newcommand{\rev}[1]{{\color{black}#1} \color{black}}
\newcommand{\sw}[1]{{\color{purple}{[\textbf{Steve:} #1}]}}
\newcommand{\ld}[1]{{\color{red}{[\textbf{Lijun:} #1}]}}
\title{On Squared-Variable Formulations}
\author{Lijun Ding\thanks{Department of Mathematics, University of California San Diego, La Jolla, CA 92093, USA (\email{l2ding@ucsd.edu}).}
\and Stephen J. Wright\thanks{Department of Computer Sciences,
University of Wisconsin, Madison, WI 53706, USA (\email{swright@cs.wisc.edu}).
}
}
\begin{document}

\maketitle

\begin{abstract}
We revisit a formulation technique for inequality constrained optimization problems that has been known for decades: the substitution of squared variables for nonnegative variables.
Using this technique, inequality constraints are converted to equality constraints via the introduction of a squared-slack variable.
Such formulations have the superficial advantage that inequality constraints can be dispensed with altogether.
But there are clear disadvantages, not least being that first-order optimal points for the squared-variable reformulation may not correspond to first-order optimal points for the original problem, because the Lagrange multipliers may have the wrong sign. 
Extending previous results, this paper shows that  points satisfying approximate second-order  optimality conditions for the squared-variable reformulation also, under certain conditions, satisfy approximate second-order optimality conditions for the original formulation, and vice versa. 
Such results allow us to adapt complexity analysis of methods for equality constrained optimization to account for inequality constraints.
On the algorithmic side, we examine squared-variable formulations for several interesting problem classes, including bound-constrained quadratic programming, linear programming, and nonnegative matrix factorization. 
We show that algorithms built on these formulations are surprisingly competitive with standard methods.
For linear programming, we examine the relationship between the squared-variable approach and primal-dual interior-point methods.
\end{abstract}

\begin{keywords}
Inequality-Constrained Optimization, Bound-Constrained Optimization, Squared-Variable Formulations, Squared-Slack Variables
\end{keywords}
\begin{MSCcodes}
90C30, 90C60, 90C05
\end{MSCcodes}

\section{Introduction} \label{sec:intro}

Consider a smooth objective $f:\real^n \rightarrow \real$ and smooth vector function $c: \real^n \rightarrow \real^m$ that define an inequality constrained optimization problem
\begin{equation}
    \label{eq:f} \tag{NLP}
    \min_x \, f(x) \;\; \mbox{s.t.} \;\; c(x) \geq 0.
\end{equation}
A formulation of this problem using squared-slack variables (SSV) is
\begin{equation}
    \label{eq:ssv} \tag{SSV}
    \min_{x,v} \, f(x) \;\; \mbox{s.t.} \;\; c(x) - v\odot v=0,
\end{equation}
where $v \in \R^m$. 
Relatedly, we can use squared variables in place of nonnegative variables. 
Consider the problem
\begin{equation}
    \label{eq:bc} \tag{BC}
    \min_x \, f(x) \;\; \mbox{s.t.} \;\; x \ge 0.
\end{equation}
By directly substituting $x = v \odot v$, we obtain the unconstrained problem
\begin{equation} \label{eq:bc.ssv} \tag{DSS}
\min_v \, F(v) := f(v \odot v).
\end{equation}

Squared-slack variables have a long history in optimization as a means of converting a problem with inequality constraints into one with equality constraints, thereby avoiding the issues involved in determining which inequality constraints are active at the solution.
Despite this superficial appeal, there are potentially serious objections to the approach, including the following.
\begin{enumerate}
\item A first-order optimal point for \eqref{eq:ssv} may not yield a first-order point for \eqref{eq:f}, because Lagrange multipliers for the constraints in \eqref{eq:ssv} may have either sign, whereas Lagrange multipliers in \eqref{eq:f} must be nonnegative.
\item Any solution $x^*$ of \eqref{eq:f} corresponds to a multiplicity of solutions of \eqref{eq:ssv}, obtained by setting $v_i^* = \pm \sqrt{c_i(x^*)}$. That is, the sign of $v_i$ is not well defined.
\item Convexity of the original formulation is generally lost in the squared-variable formulations. 
\item The reformulated problem is ``more nonlinear." Linear inequality constraints become quadratic equality constraints, quadratic objectives may become quartic, and so on.
\item Certain issues involving conditioning and numerical stability (that depend on the choice of algorithm) arise in \eqref{eq:ssv} that do not appear in \eqref{eq:f}.
\end{enumerate}

Early papers of Tapia \cite{tapia74a,tapia80a} described Newton and quasi-Newton methods applied to the first-order optimality conditions for \eqref{eq:ssv}, but while mentioning the issue concerning the sign of the Lagrange multipliers, did not propose a strategy to address it.
Bertsekas~\cite[Section~3.3.2]{Ber99} uses the squared-slack formulation for theoretical purposes, in deriving optimality conditions for inequality-constrained optimization.  
% He shows that when first- and second-order necessary conditions hold for \eqref{eq:ssv},  then first-order conditions and a weak form of second-order necessary conditions (discussed below) hold for the original formulation \eqref{eq:f}, under a linear independence constraint qualification (LICQ). 
He shows that when the  primal-dual solution $(x^*,v^*,s^*)$ of \eqref{eq:ssv} ($s^*$ being the Lagrange multiplier for the equality constraint) satisfies second-order {\em necessary} conditions with $v^*\geq 0$, then $(x^*,s^*)$ satisfies the first- and a weak form of second-order necessary conditions for \eqref{eq:f}.
(In fact, it is trivial to drop the restriction $v^*\geq 0$.)
% (We show below that, among other things, the nonnegativity restriction on $v^*$ is not necessary.)
Fukuda and Fukushima~\cite{fukuda2017note} show that when the primal-dual solution $(x^*,v^*,s^*)$ of \eqref{eq:ssv} satisfies second-order {\em sufficient} conditions, then $(x^*,s^*)$ satisfies second-order sufficient conditions for \eqref{eq:f}. 
They also discuss the relationship between LICQ in the two formulations.

The direct-square-substitution approach \eqref{eq:bc.ssv} for the bound-constrained problem \eqref{eq:bc} is discussed in Gill, Murray, and Wright's text \cite[pp.~268-269]{GilMW81}. 
They point out (as we discuss in \Cref{sec:bound}) that spurious saddle points for the reformulated problem arise when some components of $v$ are zero. 

The paper \cite{li2021simplex} considers optimization over the simplex, in which the constraint $\mathbf{1}_n^Tx = 1$ is added to the nonnegativity constraints in \eqref{eq:bc}.
The paper explores the reformulation \eqref{eq:bc.ssv}, with the constraint $\sum_{i=1}^n v_i^2=1$ included, and treats the resulting problem as one of minimization over the Riemannian manifold $\mathcal{S}_{n-1} := \{ v \, | \, \sum_{i=1}^n v_i^2 =1 \}$, referring to this reformulation technique as the ``Hadamard parametrization."
They apply a perturbed form of Riemannian gradient descent to this formulation, with various steplength selection strategies, proving convergence to a second-order optimal point of the original problem, with a certain complexity guarantee $\mathcal{O}(\log^4 n/\epsilon)$ (under assumptions of a strict saddle property, nondegeneracy, and no spurious local minima.) 
In  \cite{levin2022effect}, settings beyond the simplex constraint are considered, such as the set of low-rank matrices, and show landscape results concerning 
exact first and second-order necessary points.

\subsection*{Motivation, Outline, Contributions}
In the paper we examine several aspects of squared-variable formulations. 
First, we examine points that satisfy second-order optimality conditions {\em approximately}  for the direct-square-substitution and squared-slack formulations, and discuss whether such points satisfy approximate second-order optimality conditions for the original formulations. 
This connection allows us to perform a complexity analysis for the inequality-constrained problem \eqref{eq:f} by applying recent results on complexity of equality-constrained optimization to the formulation \eqref{eq:ssv}.
Second, 
% we perform some numerical experiments to understand under what conditions the squared-variable formulations may be competitive with algorithms for the original formulations. 
% Third, 
we focus on a squared-variable formulation of linear programming, and draw connections between sequential quadratic programming applied to this formulation and the classical primal-dual interior-point method.
In a final section, we report preliminary numerical results for several formulations of interest, including bound-constrained convex quadratic programming, linear programming, \rev{nonnegative matrix factorization,} and least-squares problem with ``pseudo-norm" constraints (a problem class that arises in variable selection in linear statistical models and compressed sensing).
% , comparing the squared-variable formulation with a primal-dual interior-point method.

\subsection*{Notation}
The Hadamard product $\odot$ of two vectors $v$ and $w$ of identical length is defined as $v \odot w = [ v_i w_i]_{i=1}^n$. 
In particular, $v \odot v = [v_i^2]_{i=1}^n$. 
We use $\mathbf{1}_n$ to be the vector in $\R^n$ whose components are all $1$ \rev{ and $e_i$ to be the vector in $\R^n$  whose $i$th component is $1$ with all other components $0$.}
For a vector $v \in \R^n$, $\diag(v)$ is the $n \times n$ diagonal matrix whose $i$th diagonal element is $v_i$, $i=1,2,\dotsc,n$. 
We often abbreviate the term ``second-order necessary" by ``2N". A ``2N point" or ``2NP" is a point that satisfies 2N conditions.
\rev{A ``weak 2N point" or ``weak 2NP" is a point that satisfies weak 2N conditions.
A ``2S point" is a point that satisfies second-order sufficient conditions.}
We occasionally use ``1N" for first-order necessary conditions and ``1P" for  a point that satisfies first-order necessary conditions.

\section{Nonnegativity constraints} \label{sec:bound}

\subsection{Equivalence of 2N points for \eqref{eq:bc} and \eqref{eq:bc.ssv}} 
Here we discuss the relationship between optimality conditions for the original formulation and squared-variable reformulation of problems in which the variables are constrained to be nonnegative.

Consider the bound-constrained problem \eqref{eq:bc} and its reformulation \eqref{eq:bc.ssv}.
Using the notation $V = \diag(v)$, the gradients and Hessian of $F$ are, respectively,
\begin{subequations} \label{eq:Fopt}
\begin{align}
\label{eq:Fopt.1}
\nabla F(v) &= 2 V \nabla f(v \odot v) \rev{= 2 v \odot \nabla f(v \odot v),} \\
\label{eq:Fopt.2}
\nabla^2 F(v) &= 2 \diag(\nabla f(v \odot v)) + 4 V \nabla^2 f(v \odot v) V.
\end{align}
\end{subequations}

Optimality conditions for \eqref{eq:bc} are as follows.
\editContent{
\begin{definition} \label{def:bc}
We say that $x^*$ satisfies {\em first-order (1N) conditions} for \eqref{eq:bc} if it satisfies primal-dual feasibility $x^* \ge 0$, $\nabla f(x^*) \ge 0$ and complementarity \\ 
$x^*_i [\nabla f(x^*)]_i=0$, $i=1,2,\dotsc, n$.
Given $x^*$ satisfying such conditions, we define the following partition of $\{1,2,\dotsc,n\}$:
\begin{align*}
\mathcal{I} & = \{i\mid x_i^*>0\} & \; \mbox{(inactive indices)} \\
\mathcal{A} & = \{i \mid x^*_i=0, [\nabla f(x^*)]_i >0 \} & \; \mbox{(active indices)} \\
\mathcal{D} & = \{i \mid x^*_i=0, [\nabla f(x^*)]_i =0\}  & \; \mbox{(degenerate indices).} 
\end{align*}
The following terms are used in association with a point $x^*$ that satisfies first-order conditions and certain other conditions:
\begin{itemize}
\item {\em strict complementarity}: 1N conditions and $\mathcal{D}=\emptyset$.
\item {\rm degenerate first-order point}: 1N conditions and $\mathcal{D} \neq \emptyset$.
\item {\em weak second-order necessary (weak 2N) conditions}:  1N conditions and \\
$w^T \nabla^2 f(x^*) w \ge 0$ for all $w$ such that $w_i = 0$ for $i\in \mathcal{A}\cup \mathcal{D}$.
\item {\em second-order necessary (2N) conditions}: 1N conditions and 
$w^T \nabla^2 f(x^*) w \geq 0$ for all $w$ such that $w_i = 0$ for $i\in \mathcal{A}$ and $w_i\geq 0$ for $i\in \mathcal{D}$. 
\item {\em second-order sufficient (2S) conditions}: 1N conditions and 
$w^T \nabla^2 f(x^*) w > 0$ for all $w  \neq 0$ such that $w_i = 0$ for $i\in \mathcal{A}$, and $w_i\geq 0$ for $i\in \mathcal{D}$. 
\end{itemize}
\end{definition}
}

% \sw{I much prefer the original definition, but I guess the nice referee asked for something like this.}\ld{Yeah...I feel this is too long...}

The second-order necessary conditions correspond to a copositivity condition in general \editContent{(for degenerate solutions, when $\mathcal{D}$ is nonempty, the inequality $w^T \nabla f(x^*)w \geq 0$ holds for all $w $ over a \emph{cone} rather than a \emph{subspace}),}  which is hard to \rev{verify.}
The {\em weak} second-order necessary conditions require positive semidefiniteness to be satisfied on a subspace of $\R^n$, which is easy to check. 
% The two kinds of conditions differ when there is degeneracy, in the sense that  $x^*_i=[\nabla f(x^*)]_i = 0$ for some $i$.

We turn next to optimality conditions for \eqref{eq:bc.ssv}.
\begin{definition} \label{def:bc.ssv}
We say that $v^*$ satisfies {\em first-order necessary (1N) } conditions to be a solution of  \eqref{eq:bc.ssv} when $\nabla F(v^*)=0$. 
We say that $v^*$ satisfies {\em second-order necessary (2N) } conditions to be a solution of  \eqref{eq:bc.ssv} if $\nabla F(v^*)=0$ and 
$\nabla^2 F(v^*) \succeq 0$.
We say that $v^*$ satisfies {\em second-order sufficient (2S) } conditions to be a solution of  \eqref{eq:bc.ssv} if $\nabla F(v^*)=0$ and  $\nabla^2 F(v^*) \succ 0$.
\end{definition}

% \rev{The condition $\nabla^2 F(v^*)\succeq 0$ is the same as that the quadratic form $s^T \nabla^2 F(v^*) s$  is nonnegative for all $s \in \mathbb{R}^n$, and $\nabla^2 F(v^*)\succ 0 $ is the same as that $s^T \nabla^2 F(v^*) s$  is  positive for all $s\not =0$. For future reference, it is convenient to explicitly express $s^\top \nabla ^2 F(v^*)s$. Recall the notation $V^* := \diag(v^*)$. The quadratic form $s^\top \nabla^2 F(v^*)s$ for any $s\in \mathbb{R}^n$ is  }
\rev{For reference in the results below, by letting $x^* = v^* \odot v^*$,  we have from \eqref{eq:Fopt.2} and $x_i^*\not=0\iff v_i^*\not = 0$, for any $s \in \mathbb{R}^n$, that 
\begin{equation}\label{eq: q_form_F}
\begin{aligned}
s^T \nabla^2 F(v^*) s 
& = 
2 \sum_{i=1}^n s_i^2 [\nabla f(x^*)]_i + 4 \sum_{i,j \in \mathcal{I}} [\nabla^2 f(x^*)]_{ij}  (v^*_i s_i) (v^*_j s_j),
\end{aligned}
\end{equation}
where $\mathcal{I} = \{i\in [1,n]\mid x^*_i >0\}$ is the set of indices for inactive constraints.
}

The following result relates 2N  conditions for the formulations \eqref{eq:bc} and \eqref{eq:bc.ssv}.
\begin{theorem}
    \label{th:bc}
    $x^*$ satisfies weak second-order necessary (weak 2N) conditions for \eqref{eq:bc} if and only if $v^*$ defined by $v^*_i = \pm \sqrt{x^*_i}$ satisfies  second-order necessary (2N) conditions for \eqref{eq:bc.ssv}.
\end{theorem}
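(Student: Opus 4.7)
The plan is to work directly from the formulas for $\nabla F(v^*)$ in \eqref{eq:Fopt.1} and for $s^T\nabla^2 F(v^*)s$ in \eqref{eq: q_form_F}. Throughout I use the bijection $v^*_i = 0 \iff x^*_i = 0$ with $x^* := v^*\odot v^*$, so the partition of indices into $\mathcal{I}$, $\mathcal{A}$, $\mathcal{D}$ is unambiguous. The crux of the argument is the substitution $w = v^*\odot s$, which sets up a precise correspondence between test vectors $s$ for $\nabla^2 F(v^*)$ and test vectors $w$ supported on $\mathcal{I}$ for weak 2N of \eqref{eq:bc}.

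For the ``if'' direction, assume 2N for \eqref{eq:bc.ssv}. The equation $\nabla F(v^*) = 0$ combined with \eqref{eq:Fopt.1} yields $v^*_i[\nabla f(x^*)]_i = 0$ for every $i$, which forces $[\nabla f(x^*)]_i = 0$ on $\mathcal{I}$ and gives complementarity globally. To recover $\nabla f(x^*) \geq 0$, plug $s = e_i$ for $i \in \mathcal{A}\cup\mathcal{D}$ into \eqref{eq: q_form_F}; the cross-term sum vanishes because $v^*_i = 0$ there, leaving $2[\nabla f(x^*)]_i = e_i^T\nabla^2 F(v^*)e_i \geq 0$. For any $w$ with $w_i = 0$ on $\mathcal{A}\cup\mathcal{D}$, set $s_i := w_i/v^*_i$ on $\mathcal{I}$ and zero elsewhere; then \eqref{eq: q_form_F} collapses to $s^T\nabla^2 F(v^*)s = 4w^T\nabla^2 f(x^*)w$, and the 2N hypothesis on \eqref{eq:bc.ssv} delivers weak 2N for \eqref{eq:bc}.

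For the converse, start from weak 2N for \eqref{eq:bc}. Formula \eqref{eq:Fopt.1} gives $\nabla F(v^*) = 0$ immediately: on $\mathcal{I}$ the factor $[\nabla f(x^*)]_i$ vanishes by complementarity, and on $\mathcal{A}\cup\mathcal{D}$ the factor $v^*_i$ vanishes. For semidefiniteness, take an arbitrary $s$ and read \eqref{eq: q_form_F} term by term: the first sum is nonnegative because $[\nabla f(x^*)]_i = 0$ on $\mathcal{I}$ and $[\nabla f(x^*)]_i \geq 0$ on $\mathcal{A}\cup\mathcal{D}$; for the second sum, set $w_i := v^*_i s_i$, automatically supported on $\mathcal{I}$, so the second sum equals $4w^T\nabla^2 f(x^*)w \geq 0$ by weak 2N.

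The only conceptual point worth flagging is why the \emph{weak} form of 2N, rather than the stronger copositivity-style 2N on the cone mentioning $\mathcal{D}$, is what matches 2N for \eqref{eq:bc.ssv}. The formula \eqref{eq: q_form_F} is blind to components of $s$ on $\mathcal{A}\cup\mathcal{D}$ in its cross-term sum, since $v^*_i = 0$ there, and the diagonal contribution along those indices only tests the sign of $[\nabla f(x^*)]_i$. Thus the squared-variable reformulation can detect curvature of $f$ only on the subspace $\{w: w_{\mathcal{A}\cup\mathcal{D}} = 0\}$, and never on the larger cone $\{w: w_\mathcal{A} = 0,\; w_\mathcal{D}\geq 0\}$ that separates 2N from weak 2N in \eqref{eq:bc} — an echo of the spurious-saddle phenomenon that motivates the discussion in this section.
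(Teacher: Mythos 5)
Your proposal is correct and follows essentially the same route as the paper's proof: both directions rest on the identity \eqref{eq: q_form_F}, the test vectors $s=e_i$ for indices with $x_i^*=0$ to recover dual feasibility, and the substitution $s_i=w_i/v_i^*$ on $\mathcal{I}$ (equivalently $w=v^*\odot s$) to transfer the curvature condition. The closing remark on why only the \emph{weak} 2N condition can be captured is a accurate and worthwhile observation, consistent with the paper's discussion of spurious saddles.
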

\rev{
\begin{proof}
Suppose that $x^*$ satisfies weak 2N conditions for \eqref{eq:bc}, and define $v^*$ as in the theorem. Since $x^*_i \neq 0 \iff v^*_i \neq 0$, the complementary conditions $x_i^* [\nabla f(x^*)]_i = 0$ for all $i=1,\dots,n$ imply that $\nabla F(v^*) = v^* \odot  \nabla f(v^* \odot v^*) = 0$. 
Given any $s \in \R^n$, we have from  \eqref{eq: q_form_F} that $s^T \nabla ^2 F(v^*)s\geq 0$,
%\begin{equation} \label{eq:os1}
%s^T \nabla^2 F(v^*) s = 
%2 \sum_{i=1}^n s_i^2 [\nabla f(x^*)]_i + 4 \sum_{i \in \mathcal{I}} \sum_{j \in \mathcal{I}}  [\nabla^2 f(x^*)]_{ij}  (v^*_i s_i) (v^*_j s_j) \ge 0,
%\end{equation}
because $\nabla f(x^*) \ge 0$ (so the first summation is nonnegative) and $w^\top \nabla^2 f(x^*) w = \sum_{i,j\in \mathcal{I}} [\nabla^2 f(x^*)]_{ij} w_iw_j \geq 0$ for all $w$ with $w_i=0$ for $i\in \mathcal{A}\cup \mathcal{D}$ (so the second summation is also nonnegative).
Thus 2N conditions hold for \eqref{eq:bc.ssv} hold at $v^*$.

Suppose now that 2N conditions hold for \eqref{eq:bc.ssv} hold at $v^*$, and set $x^* = v^* \odot v^*$. 
We have $x^* \ge 0$ and that the complementarity conditions 
$x^*_i [\nabla f(x^*)]_i =0$, $i=1,2,\dotsc,n$  hold because $0 = \nabla F(x^*) = v^*\odot  \nabla f(x^*) \implies  x^*\odot  \nabla f(x^*) =0$.
For any index $i$ such that $x^*_i>0$, we have $[\nabla f(x^*)]_i =0$ from complementarity. 
To complete the proof of dual feasibility, we need to show that $[\nabla f(x^*)]_i  \ge 0$ for all $i$ such that $x^*_i=0$.
By setting $s = e_i$ for any such $i$ in \eqref{eq: q_form_F}, and using $s_j=0$ for all $j \neq i$, we obtain $0 \le s^T \nabla^2 F(v^*) s = 2 [\nabla f(x^*)]_i$, as required. Thus $x^*$ is a 1N point for \eqref{eq:bc}, and we can define sets $\mathcal{A}$, $\mathcal{I}$, and $\mathcal{D}$ according to Definition~\ref{def:bc}.

To prove that $x^*$ is in fact a weak 2N point for \eqref{eq:bc}, we need to show that $w^T \nabla^2 f(x^*) w \ge 0$ for all $w$ such that $w_i=0$ for $i \in \mathcal{A} \cup \mathcal{D}$, that is, $i \notin  \mathcal{I}$.
We define $s \in \mathbb{R}^n$ by $s_i=w_i/v^*_i$ for $i \in \mathcal{I}$ and $s_i=0$ for $i \in \mathcal{A} \cup \mathcal{D}$, and note that $s_i [\nabla f(x^*)]_i=0$ for all $i = 1,2,\dotsc,n$ for this $s$. 
By substituting this vector $s$ in \eqref{eq: q_form_F}, we obtain 
$ 0\le s^T \nabla^2 F(v^*) s =  4 \sum_{i \in \mathcal{I}} \sum_{j \in \mathcal{I}}  [\nabla^2 f(x^*)]_{ij}  w_i w_j  = 4 w^T \nabla^2 f(x^*) w,$
so weak 2N conditions are satisfied for \eqref{eq:bc} at $x^*$.
\end{proof}
}

We show next a relationship between points satisfying second-order sufficient conditions for \eqref{eq:bc} and \eqref{eq:bc.ssv}.
\begin{theorem}
    \label{th:bc.suff}
    $x^*$ satisfies 2S conditions and strict complementarity for \eqref{eq:bc} if and only if $v^*$ defined by $v^*_i = \pm \sqrt{x^*_i}$ satisfies  2S conditions for \eqref{eq:bc.ssv}.
\end{theorem}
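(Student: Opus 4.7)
The plan is to exploit the quadratic-form identity \eqref{eq: q_form_F} in both directions, in parallel to the proof of \Cref{th:bc}, with the extra bookkeeping that strict complementarity (i.e., $\mathcal{D} = \emptyset$) allows the index partition to be split cleanly into $\mathcal{A}$ and $\mathcal{I}$. Throughout, note that $x^*_i \neq 0 \iff v^*_i \neq 0$, so $\mathcal{I}=\{i\mid v^*_i \neq 0\}$.

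For the forward direction, assume $x^*$ satisfies 2S and strict complementarity for \eqref{eq:bc}. First-order stationarity $\nabla F(v^*)=0$ follows exactly as in \Cref{th:bc}. Given any $s \neq 0$, I would decompose $s = s_{\mathcal{A}} + s_{\mathcal{I}}$ according to the partition $\{1,\dots,n\} = \mathcal{A} \cup \mathcal{I}$. In \eqref{eq: q_form_F}, the first summation reduces to $2 \sum_{i \in \mathcal{A}} s_i^2 [\nabla f(x^*)]_i$, since $[\nabla f(x^*)]_i = 0$ on $\mathcal{I}$; by strict complementarity, $[\nabla f(x^*)]_i > 0$ on $\mathcal{A}$, so this term is nonnegative and is strictly positive when $s_{\mathcal{A}} \neq 0$. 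The second summation is $4 w^T \nabla^2 f(x^*) w$ with $w_i = v^*_i s_i$ for $i \in \mathcal{I}$ and $w_i=0$ otherwise; the 2S condition for \eqref{eq:bc} applies because $w$ vanishes on $\mathcal{A}$ (and $\mathcal{D}=\emptyset$), and yields a strictly positive value whenever $w \neq 0$, i.e., whenever $s_{\mathcal{I}} \neq 0$. Since $s \neq 0$ forces at least one of $s_{\mathcal{A}}, s_{\mathcal{I}}$ to be nonzero, $s^T \nabla^2 F(v^*) s > 0$.

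For the reverse direction, assume 2S for \eqref{eq:bc.ssv} at $v^*$, and set $x^* = v^* \odot v^*$. By \Cref{th:bc} we already have that $x^*$ is a weak 2N point, so in particular it is a 1N point. The crucial new step is to show strict complementarity, i.e., $\mathcal{D}=\emptyset$. Here I would argue by contradiction: if some $i \in \mathcal{D}$, then $v^*_i=0$ and $[\nabla f(x^*)]_i=0$, so setting $s = e_i$ in \eqref{eq: q_form_F} gives $s^T \nabla^2 F(v^*) s = 2[\nabla f(x^*)]_i + 0 = 0$, contradicting $\nabla^2 F(v^*) \succ 0$. With $\mathcal{D}=\emptyset$ established, strengthening weak 2N to 2S is done by the same substitution as in \Cref{th:bc}: for any nonzero $w$ with $w_i=0$ on $\mathcal{A}$, set $s_i = w_i/v^*_i$ for $i \in \mathcal{I}$ and $s_i=0$ otherwise; then $s \neq 0$, the first summation in \eqref{eq: q_form_F} vanishes, and $0 < s^T \nabla^2 F(v^*) s = 4 w^T \nabla^2 f(x^*) w$.

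The main obstacle is really just the reverse-direction argument for strict complementarity; everything else is a careful but routine repackaging of the proof of \Cref{th:bc}. The forward direction's only subtlety is realizing that the two summations in \eqref{eq: q_form_F} cannot both vanish when $s \neq 0$ precisely because strict complementarity makes the first summation a positive-definite quadratic in $s_{\mathcal{A}}$, while 2S of \eqref{eq:bc} makes the second a positive-definite quadratic in $s_{\mathcal{I}}$ (using that $v^*_i \neq 0$ on $\mathcal{I}$, so the map $s_{\mathcal{I}} \mapsto (v^*_i s_i)_{i \in \mathcal{I}}$ is a bijection).
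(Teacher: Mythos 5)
Your proof is correct and follows essentially the same route as the paper's: both directions rest on the quadratic-form identity \eqref{eq: q_form_F}, splitting the two summations across $\mathcal{A}$ and $\mathcal{I}$ exactly as the paper does. The one small difference is that you establish $\mathcal{D}=\emptyset$ by contradiction with $s=e_i$ for $i\in\mathcal{D}$, whereas the paper tests $s=e_i$ for indices with $x_i^*=0$ directly; these are the same computation, and your phrasing is if anything slightly cleaner.
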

\rev{
\begin{proof}
Suppose that $x^*$ satisfies 2S conditions and strict complementarity ($\mathcal{D} = \emptyset$) for \eqref{eq:bc}, and define $v^*$ as in the theorem. 
From Theorem~\ref{th:bc}, we have that $v^*$ satisfies weak 2N conditions and thus 1N conditions. Given any $s \in \R^n$, from strict complementarity, we know 
$\sum_{i=1}^n s_i^2 [\nabla f(x^*)]_i = \sum_{i \in \mathcal{A}} s_i^2 [\nabla f(x^*)]_i$. The expression in \eqref{eq: q_form_F} then becomes 
\[
s^T \nabla^2 F(v^*) s = 
2 \sum_{i \in \mathcal{A}} s_i^2 [\nabla f(x^*)]_i + 4 \sum_{i\in \mathcal{I} } \sum_{j \in \mathcal{I}} [\nabla^2 f(x^*)]_{ij}  (v^*_is_i) (v^*_js_j).
\]
Since $\nabla f(x^*)\geq 0$ and since $w^\top \nabla^2 f(x^*) w = \sum_{i,j\in \mathcal{I}} \nabla^2 f(x^*)w_iw_j\geq 0 $ for any $w$ with $w_i =0$ for $i \in \mathcal{A}$, both summation terms are nonnegative. 
If $s_i \neq 0$ for any $i \in \mathcal{I}$, the second term is strictly positive, by 2S conditions for \eqref{eq:bc}. 
If $s_i \neq 0$ for any $i \in \mathcal{A}$, we have by strict complementarity that $[\nabla f(x^*)]_i>0$, so term $i$ in the first summation is strictly positive. 
Thus, $s^T \nabla^2 F(v^*) s>0$ for all $s \ne 0$, proving that $v^*$ is a 2S point for \eqref{eq:bc.ssv}.

Suppose now that 2S conditions hold for \eqref{eq:bc.ssv} at $v^*$, and let $x^* = v^* \odot v^*$. 
From Theorem~\ref{th:bc}, we know $x^*$ satisfies 2N coniditions and thus 1N conditions.
By 2S conditions for \eqref{eq:bc.ssv}, we have $s^T \nabla^2 F(x^*) s  >0$ for all $s \neq 0$. 
% \begin{equation} \label{eq:if1}
% s^T \nabla^2 F(x^*) s  = 2 \sum_{i=1}^n s_i^2 [\nabla f(x^*)]_i + 4(V^*s)^T \nabla^2 f(x^*) (V^*s)>0,
% \end{equation}
% for all $s \neq 0$.  {eq: q_form_F}
Choosing any $i \in \mathcal{A}$,  we have $v^*_i=0$. 
For this $i$, set $s_i=1$, with $s_j=0$ for all $j \neq i$. 
From \eqref{eq: q_form_F}, we have $0<s^T \nabla^2 F(x^*) s = 2 [\nabla f(x^*)]_i$, verifying that strict complementarity holds. 
Now choosing any $s \neq 0$ such that $s_i=0$ for all $i \in \mathcal{A}$.
%, for which $v^*_i=0$. 
For any such $s$, from strict complementarity, we know 
$\sum_{i=1}^n s_i^2 [\nabla f(x^*)]_i = \sum_{i\in \mathcal{A}} s_i^2 [\nabla f(x^*)]_i =0$. Hence, 
from \eqref{eq: q_form_F}, we have for any such $s$,  
$
0<s^T \nabla^2 F(x^*) s = 4 \sum_{i,j \in \mathcal{I}} (v^*_is_i) (v^*_js_j) [\nabla^2 f(x^*)]_{ij},
$
which implies that 
$w^\top \nabla^2 f(x^*)w = 
\sum_{i,j\in \mathcal{I}} \nabla ^2 f(x^*)w_iw_j>0$ for all $w\not =0$ with $w_i=0$ for $i\in \mathcal{A}$, as $v_i^*,v_j^*>0$ for $i,j\in \mathcal{I}$. This fact, together with strict complementarity, implies that 2S conditions are satisfied for \eqref{eq:bc} at $x^*$, as claimed.
\end{proof}
}

\rev{
\subsection{Application: Finding a 2N point of \eqref{eq:bc} via gradient descent for \eqref{eq:bc.ssv}} \label{sec: 2NGD}
In this section, we demonstrate an algorithmic implication of Theorem \ref{th:bc}: For a wide class of problems, one can find a weak 2N point of \eqref{eq:bc} via ``vanilla" gradient descent (GD) for \eqref{eq:bc.ssv} from almost any initial point. 
On the other hand, finding a 2N point of \eqref{eq:bc} directly requires more sophisticated algorithms. 
In particular, it is not known whether a projected gradient algorithm for \eqref{eq:bc} can find a 2N point with high probability from a random initialization. 

From \cite{lee2016gradient,panageas2016gradient,shub2013global}, for unconstrained minimization of a smooth function $h:\mathbb{R}^n \to \mathbb{R}$, it is known that GD with almost any starting point will {\em not} converge to a point that satisfies 1N conditions but not 2N conditions when the Hessian of the objective $h$ is bounded, that is, there is $L>0$ such that $ \| \nabla ^2 h(x)\|_2 \leq L$ for any $x$.
% \footnote{We denote the spectral norm or the operator two norm of a matrix as $\| \cdot\|_2$.} 
That is, GD will either converge to a 2NP or diverge for almost all initializations. 
However, even if the objective $f$ in \eqref{eq:bc} has a bounded Hessian, it is not true in general that $F$ in \eqref{eq:bc.ssv} has a bounded Hessian.
(Consider $f(x) = x^2$ and $F(v) = v^4$.) 
Thus, we cannot apply this result directly to the reformulation \eqref{eq:bc.ssv}.
To circumvent this issue, we consider objective functions $h$ with compact sublevel sets. 
We also require $h$ to be semialgebraic \cite[Definition~5.1]{bolte2014proximal}, meaning that its graph is a finite union and intersection of semialgebraic sets.\footnote{For a function $H:\mathbb{R}^{n_1}\rightarrow \mathbb{R}^{n_2}$, its graph is $\{(x,H(x))\mid x\in \mathbb{R}^{n_1}\}$. A set $A$ in $\mathbb{R}^n$ is semialgebraic if $A =\{x\mid p_i(x)=0,\;q_j(x)<0,i=1,\dots,I, j= 1,\dots,J\}$ for some polynomials $p_i$ and $q_j$ and some integers $I,J>0$.} 
We introduce this property because (i) it is satisfied for a wide range of applications, including  matrix optimization \cite{absil2008optimization,edelman1998geometry} and modern neural networks with ReLU activation \cite{davis2020stochastic}, and (ii) it ensures the convergence of gradient descent for $h$ to a 1P if the iterates are bounded \cite[Theorem~1, 
Theorem~3]{bolte2014proximal}.\footnote{More generally, the convergence holds for functions definable in an $o$-minimal structure over $\mathbb{R}$ \cite{attouch2010proximal,van1998tame}. The results in this section continue to hold for definable functions.} 

\begin{definition} \label{def:semi}
    $\mathcal{F}_s$ is the set of all functions $f:\mathbb{R}^n\rightarrow \mathbb{R}$ such that (i) $f$ is semialgebraic, and (ii) for any $\alpha\in \mathbb{R}$, the sublevel set $\{x\mid f(x)\leq \alpha\}$ is compact. 
\end{definition}

Our first theorem shows that for any $h\in \mathcal{F}_s$, gradient descent from an initial point $x_0$ (with constant stepsize bounded by a positive quantity that may depend on  $x_0$) converges to a 2N point of $\min \, h(x)$ for almost all choices of  $x_0$.
\begin{theorem}\label{thm: gdcompactsub}
    Suppose $h: \mathbb{R}^n \rightarrow \mathbb{R}$ is second-order continuously differentiable and $h\in \mathcal{F}_s$. Then for almost any initialization $x_0$, there is a step size $\eta>0$ such that gradient descent (GD) for $h(x)$ with stepsize $\eta$ starting at $x_0$ will converge to a 2N point $x_s$ (that is, $\nabla h(x_s)=0$ and $\nabla^2 h(x_s)\succeq 0$).
\end{theorem}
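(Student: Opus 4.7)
The plan is to combine three ingredients: (i) the descent lemma applied on a compact sublevel set to bound the iterates, (ii) the Kurdyka--{\L}ojasiewicz (KL) convergence result for semialgebraic functions from \cite{bolte2014proximal} to pin down the limit, and (iii) the stable-manifold theorem of \cite{lee2016gradient,panageas2016gradient,shub2013global} to rule out convergence to strict saddles from almost every initialization.

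First I would fix an arbitrary $x_0$, set $\alpha = h(x_0)$, and observe that $S := \{x : h(x) \le \alpha\}$ is compact because $h \in \mathcal{F}_s$. Since $h \in C^2$, the quantity $L := \sup_{x \in S}\|\nabla^2 h(x)\|_2$ is finite. For any stepsize $\eta \in (0, 1/L)$, the standard descent lemma yields $h(x_{k+1}) \le h(x_k) - (\eta/2)\|\nabla h(x_k)\|_2^2$ whenever $x_k \in S$; an easy induction shows that every iterate stays in $S$, that $\{h(x_k)\}$ is nonincreasing, and that $(x_k)$ is bounded with summable successive differences. Since $h$ is semialgebraic, \cite[Theorem~1, Theorem~3]{bolte2014proximal} (i.e., the KL-based convergence analysis) then ensures that the entire sequence $(x_k)$ converges to a single limit $x_s$ with $\nabla h(x_s) = 0$.

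To promote $x_s$ from a 1P to a 2NP for almost every $x_0$, I would invoke strict-saddle avoidance: when $\nabla h$ is globally $L$-Lipschitz and $\eta < 1/L$, the set of initializations whose GD iterates converge to any strict saddle (a 1P with $\lambda_{\min}(\nabla^2 h) < 0$) has Lebesgue measure zero. Since $\nabla^2 h$ is only locally bounded in our setting, I would use a level-set truncation: for each integer $k$, construct a $C^2$ function $\tilde h_k$ with globally bounded Hessian of norm $L_k$ that agrees with $h$ on an open neighborhood of $S_k := \{x : h(x) \le k\}$ (e.g., through a mollified smooth cutoff applied outside $S_{k+1}$). For any $x_0 \in S_k$ and $\eta_k < 1/L_k$, the step-$\eta_k$ GD trajectory for $\tilde h_k$ coincides with that for $h$ by the descent argument of Step~1, and the Lee et al.\ theorem applied to $\tilde h_k$ gives a Lebesgue-null set $B_k \subseteq S_k$ of ``bad'' initializations. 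The union $B := \bigcup_{k \in \mathbb{N}} B_k$ remains null, and for any $x_0 \notin B$ one chooses $\eta := \eta_k$ for the smallest $k$ with $x_0 \in S_k$: the resulting GD iterates of $h$ converge to a 1P that is not a strict saddle, hence to a 2NP.

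The main obstacle is the third step: reconciling the globally Lipschitz-gradient hypothesis of the stable-manifold theorem with our only locally bounded Hessian. The level-set truncation is the natural workaround, but requires care on two points, namely that $\tilde h_k$ is genuinely $C^2$ with a uniform Hessian bound (standard via a mollified cutoff composed with $h$), and that the extraneous critical points introduced by the modification outside $S_k$ are irrelevant for initializations in $S_k$ because the corresponding iterates never leave $S_k$. The fact that the stepsize depends on $x_0$ through the level $k$ is consistent with the theorem's statement.
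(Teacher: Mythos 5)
Your proposal follows essentially the same route as the paper's proof: compactness of sublevel sets plus the descent lemma to trap the iterates, the Kurdyka--{\L}ojasiewicz convergence theory of Bolte et al.\ to get convergence to a first-order point, a smooth cutoff to produce a surrogate with globally bounded Hessian, the strict-saddle avoidance theorem applied to that surrogate, and a countable union over levels to get an ``almost every initialization'' statement with an initialization-dependent stepsize. The one place where your write-up is looser than the paper's is the very first step: to apply the descent lemma from $x_k$ to $x_{k+1}=x_k-\eta\nabla h(x_k)$ you need a bound on $\nabla^2 h$ along the whole segment joining them, and that segment can leave $S=\{h\le\alpha\}$, so the constant $L=\sup_{x\in S}\|\nabla^2 h(x)\|_2$ does not by itself justify the inequality (there is a mild circularity: the descent inequality is what keeps $x_{k+1}$ in $S$). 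The paper repairs this by introducing the radius $r_\alpha=\max_{x\in B_\alpha}\|x\|+\tfrac{1}{L_\alpha}\|\nabla h(x)\|$, noting that one GD step from $B_\alpha$ lands in the ball $B(r_\alpha)$, and taking the Hessian bound $L_{r_\alpha}$ over that larger compact set; you should do the same (or bound the Hessian on any compact set containing all one-step images of $S$). With that routine fix your argument is complete and matches the paper's.
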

\begin{proof}
For any $\alpha \in \mathbb{Z}$, define the sublevel set $B_\alpha := \{x\mid h(x)\leq \alpha\}$ and note that it is compact by \cref{def:semi}. 
Define $A_\alpha \subset B_{\alpha}$ to be the set of points $x \in B_\alpha$ such that there is a stepsize $\eta>0$ such that GD starting at $x$ with stepsize $\eta$ converges to a 2N point. 
We show in the remainder of the proof that the following claim holds: 

\begin{equation} \label{eq: GD_claim}
\parbox{\dimexpr\linewidth-4em}{%
For any $\alpha \in \mathbb{Z}$ and almost any $x_0 \in B_\alpha$, there is a stepsize $\eta>0$ such that 
    GD starting at $x_0$ with stepsize $\eta$ converges 
to a 2N point, or equivalently, for any $\alpha \in\mathbb{Z}$, the Lebesgue measure of $B_\alpha \setminus A_\alpha$ is $0$.}
\end{equation}
 Given this claim, and noting that $\cup_{\alpha \in \mathbb{Z}} B_\alpha = \mathbb{R}^n$ due to $h\in \mathcal{F}_s$, the set $\cup_{\alpha\in \mathbb{Z}} A_\alpha$ contains all starting points for which that GD (with a step length appropriate to that starting point) converges to a 2N point. 
 The claim \eqref{eq: GD_claim} implies that the complement of $\cup_{\alpha\in \mathbb{Z}} A_\alpha$ has measure $0$, which proves our theorem.

To complete the proof, we verify the claim \eqref{eq: GD_claim}. Given $\alpha \in \mathbb{Z}$, we introduce 
\[
L_{\alpha} := \max_{x\in B_\alpha} \twonorm{\nabla^2 h(x)}, \quad r_\alpha = \max_{x\in B_\alpha}\twonorm{x} + \frac{1}{L_{\alpha}}\twonorm{\nabla h(x)},
\]
and note that both quantities are finite, by compactness of $B_{\alpha}$.
Denoting by $B(r)$ the closed ball of radius $r$ centered at $0$, we define $L_{r_\alpha} := \max_{x\in B(r_\alpha)} \twonorm{\nabla^2 h(x)}$. 
Note that $B_\alpha \subset B(r_\alpha)$ and $L_{r_\alpha} \ge L_{\alpha}$.
Next, we consider following three claims, showing that they collectively imply  \eqref{eq: GD_claim}. 

\begin{enumerate}
    \item[(i)] For any $x_0 \in B_\alpha$, GD for $h$ with any stepsize in $(0,{1}/{L_{r_\alpha}})$ has a trajectory that stays inside in $B_\alpha$ and converges to a 1P for $h$.
    \item[(ii)] There is a smooth function $g_{\alpha}$ with a bounded Hessian such that $g_\alpha$ and $h$ are identical in $B(r_\alpha+1)$.
    \item[(iii)] For any smooth function $g$ with domain $\mathbb{R}^n$ and bounded Hessian, defining  $\hat{L}_{g} = \max_{x\in \mathbb{R}^n} \| \nabla ^2 g(x)\|_2$, for almost all $x_0\in\mathbb{R}^n$, GD for $g$ initialized at $x_0$ with stepsize $\eta  \in (0,{1}/{\hat{L}_g})$  will {\em not} converge to a point that is a 1P but not a 2NP of $g$. 
\end{enumerate}
Item (ii) implies that $g_{\alpha}$ and $h$ have identical 1P and 2N points within the set $B(r_\alpha)$, while item (i) and (ii) imply that GD initialized in $B_\alpha$ with any stepsize in $(0,1/L_{r_\alpha})$ will produce the same set of iterates for $g_{\alpha}$ and $h$.
Note that if we set $g = g_{\alpha}$ in (iii) and note that $\nabla^2 g_{\alpha}(x) = \nabla^2 h(x)$ for $x \in B(r_{\alpha}) \supset B_{\alpha}$, we have $\hat{L}_{g_{\alpha}} \ge L_{r_{\alpha}}$.
Hence, we see that by choosing a stepsize $\eta \in 
(0, 1/\hat{L}_{g_{\alpha}})$, GD initialized with almost any $x_0 \in B_\alpha$ converges to a 1P of $h$ (due to (i)), and this 1P is also a 2NP of $h$ (due to (i), (ii), and (iii)). 
Recall that the set $A_{\alpha} \subset B_{\alpha}$ contains all such points $x_0$. Hence, we have because of the ``almost any" property that $B_\alpha \setminus A_\alpha$ has Lebesgue measure zero.
This is exactly our claim \eqref{eq: GD_claim}. 
Since (iii) follows from \cite[Theorem 2]{panageas2016gradient}, we are left to show (i) and (ii).

For item (i), note that for any $x \in B_\alpha$, we have $x - \eta \nabla h(x) \in B(r_\alpha)$ provided that $\eta \leq {1}/{L_{\alpha}}$. Hence, for any $\eta \in (0, {1}/{L_{r_\alpha}})$, from  
%\begin{equation}
\[
h(x-\eta \nabla h(x))\leq 
    h(x) -\eta \twonorm{\nabla h(x)}^2 + \frac{L_{r_\alpha}\eta^2}{2} \twonorm{\nabla h(x)}^2\leq h(x),
\]
%\end{equation}
we know with an initial point $x_0 \in B_\alpha$, the trajectory of GD for $h$ with a stepsize smaller than ${1}/{L_{r_\alpha}}$ always lies in $B_\alpha$.  To show that GD converges, from the assumption that $h$ is semialgebraic, we know that the Kurdyka-{\L}ojasiewicz inequality holds for $h$ \cite[Theorem~3]{bolte2014proximal}. 
The iterates are  bounded because they are all in $B_\alpha$. 
Hence, from \cite[Theorem~1 and Remark~3 (iv)]{bolte2014proximal}, gradient descent for $h$ with any 
initial $x_0\in B_\alpha$ converges to a 1P for any stepsize in $\eta \in (0,{1}/{L_{r_\alpha}})$.

For item (ii), by a standard construction in analysis, there is a smooth nonnegative function $\phi:\mathbb{R}^n\rightarrow \mathbb{R}$ such that $\phi$ is $1$ for $x\in B(r_\alpha+1)$, $0$ for $x\in B(r_\alpha+2)$. 
%and is gradually decreasing from $B(r_\alpha+1)$ to $B(r_\alpha+2)$. 
The function $g = \phi h$ satisfies the requirement.
\end{proof}

With Theorem \ref{thm: gdcompactsub}, we reach the main conclusion of this section. 
\begin{corollary}
    If the objective $f$ in \eqref{eq:bc} belongs to $\mathcal{F}_s$, then for almost all initializations $v_0$, gradient descent with a small enough constant stepsize for $F$ in \eqref{eq:bc.ssv} converges a point $v_s$ such that $x_s = v_s \odot v_s$ is a 2N point of \eqref{eq:bc}.
\end{corollary}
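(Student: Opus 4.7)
The plan is to invoke Theorem \ref{thm: gdcompactsub} applied to $F$, and then convert the resulting 2N point of \eqref{eq:bc.ssv} back to a (weak) 2N point of \eqref{eq:bc} via Theorem \ref{th:bc}. So the main task is to verify that $F(v) = f(v\odot v)$ itself lies in $\mathcal{F}_s$, after which essentially everything follows by chaining the two theorems.

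First I would check the two conditions in Definition \ref{def:semi} for $F$. Semialgebraicity of $F$ follows because $f$ is semialgebraic by assumption and $v \mapsto v\odot v$ is a polynomial (hence semialgebraic) map, and compositions of semialgebraic functions are semialgebraic. For compactness of sublevel sets, fix $\alpha\in\mathbb{R}$ and observe that $\{v : F(v)\leq \alpha\}$ is the preimage under the continuous map $v\mapsto v\odot v$ of the set $S_\alpha := \{x \in \mathbb{R}^n_+ : f(x)\leq \alpha\}$. Since $f\in\mathcal{F}_s$, the set $\{x : f(x)\leq \alpha\}$ is compact, hence so is its intersection $S_\alpha$ with the closed orthant $\mathbb{R}^n_+$. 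Thus $S_\alpha$ is bounded, say by some $M>0$ in each coordinate, and every $v$ with $F(v)\leq\alpha$ satisfies $|v_i|\leq\sqrt{M}$. Combined with continuity of $F$ (so the sublevel set is closed), this gives compactness.

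With $F\in\mathcal{F}_s$ established, Theorem \ref{thm: gdcompactsub} applies directly: for almost every initialization $v_0\in\mathbb{R}^n$ there is a stepsize $\eta>0$ (small enough, possibly depending on the sublevel set containing $v_0$) such that GD on $F$ starting at $v_0$ with stepsize $\eta$ converges to a 2N point $v_s$ of \eqref{eq:bc.ssv}. Applying Theorem \ref{th:bc} to $v_s$ (in the direction 2N of \eqref{eq:bc.ssv} $\Longrightarrow$ weak 2N of \eqref{eq:bc}), we conclude that $x_s = v_s\odot v_s$ is a (weak) 2N point of \eqref{eq:bc}, which is exactly the claim.

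The only mildly subtle step is the compactness argument for sublevel sets of $F$, since one might initially worry that the two-to-one folding map $v\mapsto v\odot v$ could destroy compactness; the key observation is that this map is proper when restricted to preimages of bounded subsets of $\mathbb{R}^n_+$. Everything else is a direct citation of the preceding results, so no further machinery is needed.
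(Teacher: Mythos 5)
Your proposal is correct and follows essentially the same route as the paper: establish that $F\in\mathcal{F}_s$ (semialgebraicity via composition, boundedness of sublevel sets via $|v_i|\leq\sqrt{b_\alpha}$) and then chain Theorem \ref{thm: gdcompactsub} with Theorem \ref{th:bc}. Your compactness argument is a bit more carefully spelled out than the paper's (which only notes boundedness), but the substance is identical.
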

\begin{proof}
    With Theorem \ref{th:bc} and \ref{thm: gdcompactsub}, we need only to show that the function $F$ defined in \eqref{eq:bc.ssv} also belongs to $\mathcal{F}_s$. 
    The compact sublevel set property follows from the following argument. 
    For any $\alpha\in \mathbb{R}$,  $f(x)\leq \alpha$ implies that $|x_i| \le b_{\alpha}$ for some $b_{\alpha}>0$ and all $i=1,2,\dotsc,n$. Thus, $|v_i|\leq \sqrt{b_{\alpha}}$ for all $i$ when $x_i = v_i^2$. 
    Hence, the sublevel set of $F$ is also bounded. 
    The semialgebraic property for $F$ follows from the property that composition of semialgebraic functions is semialgebraic \cite[Example~5.1]{bolte2014proximal} and the fact that $v \mapsto v\odot v$ is semialgebraic because its graph is semialgebraic.  
\end{proof}
}

\subsection{Application: $\ell_1$ regularization}
\label{sec: app_L1}

\rev{
The $\ell_1$ norm is used in optimization formulations as a regularizer to encourage sparsity, often by adding  a multiple of $\| x \|_1$ to the original smooth objective function $h:\RR^{n} \rightarrow \RR$. 
Consider the regularized  optimization problem
\begin{equation}\label{eq: L1} %\tag{$L_1$}
    \min_{x \in \mathbb{R}^n} \, h(x) + \lambda \onenorm{x},
\end{equation}
where $\lambda>0$ is a tuning parameter. 
It is well-known the following constrained formulation is equivalent to the original formulation in terms of global minimizers:
\begin{equation}\label{eq: L1bc} %\tag{$L_1$-BC}
\min_{\bar{x} = (x_+,x_-) \in \mathbb{R}^{2n}} \, \bar{h}(\bar{x}) := h(x_+-x_-) + \lambda \left( \mathbf{1}_n^\top x_+ + \mathbf{1}_n^\top x_-\right)\;\; 
    \mbox{s.t.} \;\; \bar{x}\geq 0.
\end{equation}
This problem has the form of \eqref{eq:bc}.
From Lemma \ref{lem: 1PL1L1bc}, this equivalence actually continues to hold for 1P (points satisfying first-order necessary conditions). 
Consider now the direct substitution $\bar{x} = v\odot v$ for 
$
v :=  \begin{bmatrix}
    v_+ \\ 
    v_-
\end{bmatrix} \in \RR^{2n}.
$
We can reformulate \eqref{eq: L1bc}  in the manner of \eqref{eq:bc.ssv} as
\begin{equation}\label{eq: L1bc.ssv} 
     \min_{v_+,v_-\in \RR^{n}} \,  h(v_+\odot v_+ - v_-\odot v_- ) + {\lambda}\big(\twonorm{v_+}^2+\twonorm{v_-}^2\big).
\end{equation}
Theorem~\ref{th:bc} can be applied to show a direct correspondence between 2N points for \eqref{eq: L1bc} and \eqref{eq: L1bc.ssv}. 
When $h$ is convex, a 2NP (indeed, even a 1P) of \eqref{eq: L1bc} is a global optimum.
Thus, our result shows that we can find the global optimum of  \eqref{eq: L1bc} (as well as the global optimum of \eqref{eq: L1}) by finding a 2NP of \eqref{eq: L1bc.ssv} if $h$ is convex. 
Theorem~\ref{thm: gdcompactsub} suggests that such points could be found by applying gradient descent to \eqref{eq: L1bc.ssv} from a random starting point, provided that $h$ is also semialgebraic.
For general smooth $h$, we have from 
Theorem~\ref{th:bc} and Theorem \ref{thm: gdcompactsub} that  we can find a 
2NP of \eqref{eq: L1bc} 
by finding a 2NP of \eqref{eq: L1bc.ssv}, by using gradient descent (GD) if $h$ is semialgebraic. 
Furthermore, due to equivalence of 1Ps between \eqref{eq: L1} and \eqref{eq: L1bc} (from Lemma \ref{lem: 1PL1L1bc}), the 2NP  found by GD   also gives a 1P of \eqref{eq: L1} . 

In Section \ref{sec: cls}, we investigate numerically a constrained form of \eqref{eq: L1} where $h$ is a sum-of-squares function. We also extend the squared-variable  approach to tackle pseudo-norms. 
}

\section{Properties of the SSV reformulation} \label{sec:ssv.theory}

In this section, we examine properties of the formulation \eqref{eq:ssv} and its relation to the inequality constrained optimization problem \eqref{eq:f}, focusing on second-order necessary (2N) optimality conditions and approximate versions of these conditions. 
We begin in \Cref{sec:2on} with some definitions and discussion of the exact form of the 2N conditions.
\Cref{sec:app2N} \rev{discusses} approximate forms of these conditions for both \eqref{eq:f} and \eqref{eq:ssv}, and the relationships between them.
\Cref{sec:cq} discusses constraint qualifications for the two formulations, while \Cref{sec:complex} discusses \rev{a complexity result} for solving \eqref{eq:f} by applying an algorithm for equality constrained optimization to \eqref{eq:ssv}.

\subsection{Second-order necessary (2N) conditions}
\label{sec:2on}

The Lagrangians of the formulations \eqref{eq:f} and \eqref{eq:ssv}  are 
\begin{align}\label{eq:Lo_and_L}
\cL_o(x,s) = f(x) - \sum_{i=1}^m s_i c_i(x),   \quad \text{and} \quad 
\cL(x,v,s) = f(x) - \sum_{i=1}^m s_i (c_i(x)-v_i^2),
\end{align}
respectively.  The second-order necessary (2N) conditions for \eqref{eq:f} can be defined in terms of the ``original" Lagrangian, which we denote by  $\cL_o$.
\begin{definition}[2N conditions for \eqref{eq:f}]\label{def: sospOriginalProblem} 
A point $(x,s) \in \R^n \times \R^m$ satisfies {\em second-order necessary (2N) conditions} for \eqref{eq:f} if it satisfies the first-order necessary (KKT) conditions along with an additional condition (stated last) involving $\nabla^2_{xx} \cL_o(x,s)$. 
The KKT conditions are:
\begin{subequations}
\label{eq: originalG.kkt}
\begin{align}
    & \nabla_x \cL_o (x,s) = \nabla f(x) - \sum_{i=1}^ m s_i \nabla c_i(x) = 0,  \\
    & s \geq 0, \quad c(x) \geq 0,\\ 
    & s_i c_i(x) = 0, \quad i=1,2,\dotsc,m.
\end{align}
\end{subequations}
Given $(x,s)$, we define the index sets $\cI$, $\cA$, and $\cD$ to be a partition of $\{1,2,\dotsc,m\}$  satisfying
\begin{subequations}
\label{eq:Gbc.kkt}
\begin{align}
    i \in \cI & \Rightarrow \; c_i(x)>0, \; s_i =0, \\
    i \in \cA & \Rightarrow \; c_i(x) = 0 , \; s_i >0, \\
    i \in \cD & \Rightarrow \; c_i(x) =0, \; s_i =0.
\end{align}
\end{subequations}
To complete the definition of 2N conditions, we need that for all $w$ with $\nabla c_i(x)^\top w=0$ for $i \in \cA$ and $\nabla c_i(x)^\top  w  \ge 0$ for $i \in \cD$, we have
\begin{equation} \label{eq:2o.strong}
w^T \nabla^2 _{xx} \cL_o(x,s ) w \ge 0.  
\end{equation}

We say that $(x,s)$ satisfies {\em weak second-order necessary (weak 2N) conditions} if it satisfies the KKT condition \eqref{eq: originalG.kkt} and the following (less restrictive) condition on $\nabla^2_{xx} \cL_o(x,s)$:
\begin{equation} \label{eq:2o.weak}
w^T \nabla^2_{xx} \cL_o (x,s) w \ge 0, \quad \mbox{for all $w$ with $\nabla c_i(x)^\top w=0$ for $i \in \cA \cup \cD$.}    
\end{equation}
\end{definition}

Next we define 2N conditions for \eqref{eq:ssv}.
\begin{definition}[2N conditions for \eqref{eq:ssv}] \label{def: sospSSV}
A point $(x,v,s) \in \R^n \times \R^m \times \R^m$ satisfies {\em second-order necessary (2N) conditions} for \eqref{eq:ssv} if it satisfies the following KKT condition:
\begin{subequations}
\label{eq: Gbc.ssv.kkt}
\begin{align}
    \nabla_x \cL (x,v,s) &= \rev{\nabla_x \cL_o (x,v,s) = }\nabla f(x) - \sum_{i=1}^ m s_i \nabla c_i(x) = 0,  \\
 \nabla_v \cL (x,v,s) &= 2 s \odot v= 0,  \\
     c_i ( x) &= v_i^2, \quad i=1,2,\dotsc,m,
\end{align}
\end{subequations}
along with the condition that for all $(w,z)\in \real^{n + m}$ satisfying $\nabla c_i(x)^\top w = 2v_i z_i$ for $i=1,2,\dotsc,m$, we have
\begin{equation} \label{eq:Gbc.ssv.soc}
w^T \nabla^2_{xx}\cL (x,v,s ) w  + 2\sum_{i=1}^m s_i z_i^2 
\rev{= w^T \nabla^2_{xx}\cL_o (x,v,s ) w  + 2\sum_{i=1}^m s_i z_i^2 }
\ge 0.
\end{equation}
\end{definition}

% We use the terminology ``2N points" to refer to points that satisfy 2N conditions.
The following result relates 2N points for \eqref{eq:ssv} to weak 2N points for \eqref{eq:f}.
\begin{theorem}\label{thm: SOCequavalence}
If  $(x,v,s)$ is a 2N point of \eqref{eq:ssv} then $(x,s)$ is a weak 2N point for \eqref{eq:f}. 
Conversely, if $(x,s)$ is a weak 2N point for \eqref{eq:f}, then any point $(x,v,s)$ with $v_i ^2 = c_i(x)$ is a 2N point for \eqref{eq:ssv}. 
\end{theorem}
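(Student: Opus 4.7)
The plan is to prove both directions by exploiting the identity $\nabla^2_{xx}\cL(x,v,s) = \nabla^2_{xx}\cL_o(x,s)$ (the squared-slack term $\sum s_i v_i^2$ is independent of $x$), and to translate between the critical cone for \eqref{eq:f} at $(x,s)$ and the linearized constraint set $\{(w,z): \nabla c_i(x)^\top w = 2 v_i z_i\}$ for \eqref{eq:ssv} at $(x,v,s)$. Throughout, I use the active/degenerate/inactive partition of Definition~\ref{def: sospOriginalProblem}, noting that $i\in \cI$ corresponds to $v_i \neq 0$ while $i \in \cA \cup \cD$ corresponds to $v_i=0$.

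\textbf{Direction: 2N of \eqref{eq:ssv} $\Rightarrow$ weak 2N of \eqref{eq:f}.} First I would extract the KKT conditions of \eqref{eq:f} from \eqref{eq: Gbc.ssv.kkt}: primal feasibility $c_i(x) = v_i^2 \geq 0$ is immediate, complementarity $s_i c_i(x) = s_i v_i^2 = v_i(s_i v_i) = 0$ follows from $s\odot v = 0$, and stationarity of $\cL_o$ matches that of $\cL$. The nontrivial piece is dual feasibility $s \geq 0$, which I recover from the second-order condition by testing with $(w,z) = (0,e_i)$ for any $i$ with $v_i=0$: the linearization constraint $\nabla c_i(x)^\top w = 2 v_i z_i$ reduces to $0=0$, so \eqref{eq:Gbc.ssv.soc} yields $2 s_i \geq 0$. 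For the weak curvature condition, given any $w$ with $\nabla c_i(x)^\top w = 0$ for $i \in \cA \cup \cD$, I construct a matching $z$ by $z_i = \nabla c_i(x)^\top w / (2v_i)$ for $i \in \cI$ and $z_i = 0$ otherwise. Then $s_i z_i^2 = 0$ for every $i$ (complementarity kills the $\cI$ terms and $z_i=0$ kills the rest), so \eqref{eq:Gbc.ssv.soc} collapses to $w^\top \nabla^2_{xx} \cL_o(x,s) w \geq 0$, which is precisely \eqref{eq:2o.weak}.

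\textbf{Direction: weak 2N of \eqref{eq:f} $\Rightarrow$ 2N of \eqref{eq:ssv}.} Choose $v$ with $v_i^2 = c_i(x)$. The KKT conditions \eqref{eq: Gbc.ssv.kkt} are immediate: stationarity in $x$ is inherited, $c_i(x)=v_i^2$ holds by definition, and $s_i v_i = 0$ follows from $s_i c_i(x)=0$. For the curvature condition, fix any $(w,z)$ satisfying $\nabla c_i(x)^\top w = 2v_i z_i$. For $i \in \cA \cup \cD$ we have $v_i = 0$, so the constraint forces $\nabla c_i(x)^\top w = 0$; hence $w$ lies in the critical cone of \eqref{eq:2o.weak} and $w^\top \nabla^2_{xx} \cL_o(x,s) w \geq 0$. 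The remaining term $2 \sum_i s_i z_i^2$ splits by the partition: terms with $i\in \cI \cup \cD$ vanish because $s_i = 0$, while terms with $i \in \cA$ contribute $2 s_i z_i^2 \geq 0$ (with $z_i$ unconstrained but $s_i > 0$). Summing yields \eqref{eq:Gbc.ssv.soc}.

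\textbf{Main obstacle.} The only delicate step is recovering dual nonnegativity $s_i \geq 0$ at indices with $v_i=0$ in the first direction: first-order stationarity of \eqref{eq:ssv} leaves the sign of $s_i$ completely undetermined there, and it is only the second-order condition, probed in the ``pure $z$'' direction $(0,e_i)$, that pins down the sign. The other potential subtlety --- that on $\cA$ the auxiliary variables $z_i$ are free in the linearized constraint yet contribute $s_i z_i^2$ to the quadratic form --- actually works in our favor in both directions, since $s_i>0$ on $\cA$ makes these terms nonnegative rather than threatening the inequality.
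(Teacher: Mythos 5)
Your proposal is correct and follows essentially the same route as the paper's own proof: both directions use the identity $\nabla^2_{xx}\cL = \nabla^2_{xx}\cL_o$, recover $s_i \ge 0$ at indices with $v_i=0$ by testing \eqref{eq:Gbc.ssv.soc} with $(w,z)=(0,e_i)$, and pass between the two curvature conditions via the substitution $z_i = \nabla c_i(x)^\top w/(2v_i)$ on $\cI$ and $z_i=0$ elsewhere. If anything, your write-up is slightly cleaner in the converse direction, since you correctly invoke only the weak condition \eqref{eq:2o.weak} (which is all the theorem assumes), whereas the paper's text there cites the stronger condition \eqref{eq:2o.strong} even though the weak one suffices.
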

\begin{proof}
The first claim is essentially shown in Bertsekas~\cite[Section~3.3.2]{Ber99}. (We need only observe that the requirement in that proof that $v_i \ge 0$ is unnecessary.) 
%
%\iffalse
Here, we provide proof of the first claim to make the paper self-contained.

Let us prove the direction that if $(x,v,s)$ is a 2N point of \eqref{eq:ssv}, then $(x,s)$ is a weak 2N point for \eqref{eq:f}.

We adopt the notation of Definitions \ref{def: sospOriginalProblem} and \ref{def: sospSSV}. Note that the last two conditions in  \eqref{eq: Gbc.ssv.kkt} imply that $c(x)\geq 0$ and $s _i c_i(x) = 0$ for all $i$. Hence,
\begin{equation} \label{eq: Gbc.ssv.kkt.a}
    c_i(x) >0 \Leftrightarrow v_i \neq 0 \Rightarrow s_i=0, \;\forall \,i \in \cI.
\end{equation}
However, the KKT condition itself does not guarantee $s_i \geq 0$ for $i \in \cA \cup \cD$.

Due to \eqref{eq: Gbc.ssv.kkt.a}, the condition \eqref{eq:Gbc.ssv.soc} reduces to the requirement that for all $(w,z)\in \real^{n + m}$ with $\nabla c_i(x)^\top w = 0$ for $i\in \cA \cup \cD$ and $\nabla c_i(x)^\top w = 2v_i z_i$ for $i\in \cI$, we have
\begin{equation}\label{eq:Gbc.socReduced}
 w^T \nabla^2_{xx}\cL (x,v,s ) w  + 2\sum_{i \in \cA \cup \cD} s_i z_i^2 \ge 0.
\end{equation}
Now for each $i \in \cA \cup \cD$, we set $w=0$ and $z_i = 1$, and set the other entries of $z$ to $0$. 
We thus obtain from \eqref{eq:Gbc.socReduced} that $s_i \geq 0$ for each $i \in \cA \cup \cD$. 

Note that $\nabla_x \cL(x,v,s) = \nabla _x \cL_o(x,s)$ and $\nabla_{xx} \cL(x,v,s) = \nabla _{xx} \cL_o(x,s)$. To prove the second order condition \eqref{eq:2o.weak}, for any $w$ for which $\nabla c_i(x)^Tw=0$ for all $i \in \cA \cup \cD$, we set $z_i = \nabla c_i(x)^\top w/(2v_i)$ for $i\in \cI$ and $z_i = 0$ for $i\in \cA \cup \cD$ in \eqref{eq:Gbc.socReduced}. Such a choice of $z_i$ in \eqref{eq:Gbc.socReduced} gives us 
\begin{equation}\label{eq:Gbc.socReducedTwo}
\begin{aligned} 
    &w^T \nabla^2_{xx}\cL_o (x,s ) w
    = w^T \nabla^2_{xx}\cL (x,v,s ) w  \ge 0, \\
    & \mbox{for all $w\in \real^{n}$ with $\nabla c_i(x)^\top w = 0$ for $i\in \cA \cup \cD$.} 
\end{aligned}
\end{equation}
Thus we have shown that a 2N point for \eqref{eq:ssv} is a weak 2N point for \eqref{eq:f}.
%\fi

For the reverse implication, we let $(x,s)$ be a 2N point for \eqref{eq:f}.
The first-order conditions \eqref{eq: Gbc.ssv.kkt} for \eqref{eq:ssv} are immediate from the KKT conditions \eqref{eq: originalG.kkt} for \eqref{eq:f}. 
Since $s_i \ge 0$, $i=1,2,\dotsc,m$, the second (summation) term in \eqref{eq:Gbc.ssv.soc} is always nonnegative.
For the first term, we note that any $(w,z)$ satisfying $\nabla c_i(x)^\top w = 2v_i z_i$ for $i=1,2,\dotsc,m$ actually satisfies $\nabla c_i(x)^Tw=0$ for $i \in \cA \cup \cD$, since $v_i=0$ for such $i$. 
Therefore $w^T \nabla_{xx}^2 \cL(x,v,s) w = w^T \nabla_{xx}^2 \cL_o(x,s) w \ge 0$ for such $w$, from \eqref{eq:2o.strong} and 
\editContent{$\nabla_{xx}^2 \cL(x,v,s) =  \nabla_{xx}^2 \cL_o(x,s)$.}
Thus the first term in \eqref{eq:Gbc.ssv.soc} is also nonnegative for all $(w,z)$ satisfying $\nabla c_i(x)^\top w = 2v_i z_i$ for $i=1,2,\dotsc,m$.
Thus $(x,s,v)$ is a 2N point for \eqref{eq:ssv}, as claimed.
\end{proof}

Note that the index set $\cD$ is empty if strict complementarity holds for the original problem \eqref{eq:f}. 

\subsection{Approximate 2N conditions}
\label{sec:app2N}
In this section, we define approximate forms of the 2N conditions for both \eqref{eq:f} and \eqref{eq:ssv}, and prove that approximate 2N points of \eqref{eq:ssv} are approximate 2N points of \eqref{eq:f}.
% Recall the Lagrangian of \eqref{eq:f} is $\cL_o(x,s) = f(x) - \sum_{i=1}^m s_i c_i(x)$ and the one of \eqref{eq:ssv} is $\cL(x,v,s) = f(x) - \sum_{i=1}^m s_i (c_i(x)-v_i^2)$.

\begin{definition}[Approximate 2N point for \eqref{eq:f}] \label{def:opt.f}
Define the $\alphac$-active set for a point $x$: 
\begin{equation} \label{eq:defAc}
\cA_{\alphac} = \{i\mid c_i(x)\leq \alphac\}.
\end{equation} 
A point $(x,s)$ satisfies
{\em  $(\epsfoc,\epspf,\epscs,\epspd,\epssoc,\alphac)$-approximate 2N conditions} for \\
\eqref{eq:f} if it satisfies the following approximate KKT conditions:
\begin{subequations} \label{eq: Gbc.kkt.approx}
\begin{align}
\label{eq: Gbc.kkt.approx.A}
    \twonorm{
    \nabla_x \cL_o (x,s)
    }&\leq \epsfoc  & \hfill \text{(first order condition)},\\
    \label{eq: Gbc.kkt.approx.B}
    c_i(x)&\geq -\epspf \text{  for all $i$}& \hfill \text{(primal feasibility)},\\
    \label{eq: Gbc.kkt.approx.C}
    \twonorm{s \odot c(x)}& \leq \epscs & \hfill \text{(complementary slackness)},\\
    \label{eq: Gbc.kkt.approx.D}
    s_i + \rev{a_i  }c_i(x)& \geq -\epspd \text{  for all $i$}& \hfill \text{(primal-dual feasibility)},
\end{align}
\end{subequations}
\rev{for some $a_i \ge 0, i=1,\dots, n$ independent of $(\epsfoc,\epspf,\epscs,\epspd,\epssoc)$ (but possibly depending on $x$ and $s$),} 
in addition to the following condition: For all $w \in \R^n$ such that  $\nabla c_i(x)^\top  w=0$ for all $i\in \cA_{\alphac}$, the following inequality holds:
\begin{equation} \label{eq:Gbc.soc.approx}
w^T \nabla^2_{xx}\cL_o (x,s ) w 
\ge -\epssoc \twonorm{w}^2.
\end{equation}
\end{definition}

If the parameters $\epsfoc,\epspf,\epscs,\epspd,\epssoc,\alphac$ all equal to $0$, our definition reduces to the weak 2N conditions. 
\rev{
% Since the quantity $a_i$ in the definition is implicit, the condition \eqref{eq: Gbc.kkt.approx.D} is not directly checkable. 
% However,  
If the Jacobian of the active constraints, $J_{\cA_{\alphac}}(x) = [\nabla c_i^\top (x)]_{i \in \cA_{\alphac}}$, has full row rank,  we show in Theorem~\ref{thm: approximateSOC} that an approximate 2N point of \eqref{eq:ssv} (Definition~\ref{def:ssv.approx}) gives an approximate 2N point of \eqref{eq:f} (Definition~\ref{def:opt.f}). 
% Hence, it might not be necessary to check them directly if one can check the approximate 2N conditions for  \eqref{eq:ssv}. Moreover, for any $i\in \cA_{\alphac}$, 
Under the condition that $J_{\cA_{\alphac}}(x)$ has full row rank, an explicit formula for $a_i$, $i \in \cA_{\alphac}$ is given in \eqref{eq: approx_soc_notations}.
(For $i \not \in \cA_{\alphac}$, we can set $a_i=0$, as shown in the proof of Theorem \ref{thm: approximateSOC}.)
}

%In the above definition, we have both primal-dual feasibility and primal feasibility. 

\begin{definition}[Approximate 2N point for \eqref{eq:ssv}]
\label{def:ssv.approx}
A point $(x,v,s)$ satisfies {\em $(\epsilon_1,\epsilon_2,\epsilon_3)$-approximate 2N conditions for \eqref{eq:ssv}} if it satisfies the following approximate KKT condition:
\begin{subequations} \label{eq: Gbc.ssv.kkt.approx}
\begin{align}
\label{eq: Gbc.ssv.kkt.approx.A}
    \twonorm{
    \begin{bmatrix}
    \nabla_x \cL (x,v,s)\\
    \nabla_v \cL (x,v,s))
    \end{bmatrix}
    } =
    \twonorm{
    \begin{bmatrix}
    \nabla f(x) - \sum_{i=1}^ m s_i \nabla c_i(x)\\
   2 s \odot v 
    \end{bmatrix}
    }&\leq \epsilon_1\\
    \label{eq: Gbc.ssv.kkt.approx.B}
    \twonorm{c(x)- v\odot v}&\leq \epsilon_2,
\end{align}
\end{subequations}
along with the following second order condition: 
For all $(w,z)\in \R^n \times \R^m$ with $\nabla c_i(x)^\top w = 2v_i z_i$, $i=1,2,\dotsc,m$, we have
\begin{equation} \label{eq:Gbc.ssv.soc.approx}
w^T \rev{\nabla^2_{xx}\cL_o} (x,v,s ) w  + 2\sum_{i=1}^m s_i z_i^2 
\ge -\epsilon_3 \twonorm{(w,z)}^2.
\end{equation}
\end{definition}

We are ready to state the theorem that relates approximate 2N points of \eqref{eq:f} and  \eqref{eq:ssv} to each other.
\begin{theorem}\label{thm: approximateSOC}
Suppose that $(x,v,s)$ is an $(\epsilon_1,\epsilon_2,\epsilon_3)$-approximate 2N point for \eqref{eq:ssv} \rev{with $\epsilon_i \in (0,1]$, $i=1, 2, 3$.}
Choose $\alphac \ge 2 \epsilon_2$ to define an approximately active set $\cA_{\alphac}$ from \eqref{eq:defAc}, and suppose that the active constraint Jacobian \rev{$J_{\cA_{\alphac}}(x) := [\nabla c_i^\top (x)]_{i \in \cA_{\alphac}}$} corresponding to $\cA_{\alphac}$ has full rank.
Then there are positive quantities $\epsfoc$, $\epspf$, $\epscs$, $\epspd$, and $\epssoc$ satisfying
\begin{equation}
\label{eq:bw1}
\begin{aligned}
 &    \epsfoc =\epsilon_1, \quad \epspf = \epsilon_2, \quad \epscs = O(\epsilon_1+\epsilon_2), \\
 &  \epspd = O\Big( \frac{\epsilon_1}{\alphac^{1/2}} + \epsilon_2 + \epsilon_3 \Big), \quad
 \epssoc= O \Big( \epsilon_3 + \frac{\epsilon_3}{\alphac} + \frac{\epsilon_1}{\alphac^{3/2}} \Big),
\end{aligned}\end{equation}
such that $(x,s)$ is an  $(\epsfoc,\epspf,\epscs,\epspd,\epssoc,\alphac)$-approximate 2N point for \eqref{eq:f}. 
Here the notation $O$ omits dependence on the constraint $c(x)$, the Jacobian matrix of the constraints, the Lagrange  multiplier vector $s$, and the Hessian matrix $\nabla ^2_{xx}\mathcal{L}_o(x,s)$.
%, and the smallest singular value of $J_{\cA_{\alphac}}(x)$.
\end{theorem}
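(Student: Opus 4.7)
My plan is to verify each of the four approximate-optimality conditions for \eqref{eq:f} in turn. The approximate first-order condition and primal feasibility are immediate: $\nabla_x \cL = \nabla_x \cL_o$ yields $\epsfoc=\epsilon_1$, and $c_i(x)\ge v_i^2-\epsilon_2\ge -\epsilon_2$ yields $\epspf=\epsilon_2$. For complementarity, I would split $s_ic_i(x) = (s_iv_i)v_i + s_i(c_i(x)-v_i^2)$ and use \eqref{eq: Gbc.ssv.kkt.approx.A}--\eqref{eq: Gbc.ssv.kkt.approx.B} to obtain $\twonorm{s\odot c(x)} \le \infnorm{v}\,\epsilon_1/2 + \infnorm{s}\,\epsilon_2 = O(\epsilon_1+\epsilon_2)$.

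The heart of the argument is the approximate primal-dual inequality, which I would split on whether $i\in\cA_\alphac$. For $i\notin\cA_\alphac$, the hypothesis $\alphac\ge 2\epsilon_2$ combined with \eqref{eq: Gbc.ssv.kkt.approx.B} yields $v_i^2\ge c_i(x)-\epsilon_2\ge \alphac/2$, so $|s_i|=|s_iv_i|/|v_i| \le \epsilon_1/\sqrt{2\alphac}$ and we may take $a_i=0$. For $i\in\cA_\alphac$ I would use the full-row-rank hypothesis on $J_{\cA_\alphac}$ to build a test direction that singles out coordinate $i$: let $w = 2v_i\, J_{\cA_\alphac}^{\top}(J_{\cA_\alphac}J_{\cA_\alphac}^{\top})^{-1} e_i$, so that $\nabla c_j(x)^\top w = 2v_i\delta_{ij}$ for $j\in\cA_\alphac$; then take $z_i=1$, $z_j=0$ for $j\in\cA_\alphac\setminus\{i\}$ (feasible regardless of whether $v_j=0$ since $\nabla c_j^\top w=0$), and $z_j=\nabla c_j(x)^\top w/(2v_j)$ for $j\notin\cA_\alphac$ (well-defined because $|v_j|\ge\sqrt{\alphac/2}$ there). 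The degenerate case $v_i=0$ is handled separately by the simpler choice $w=0$, $z=e_i$. Plugging this $(w,z)$ into \eqref{eq:Gbc.ssv.soc.approx} and bounding $\twonorm{w}^2=O(v_i^2)$, $z_j^2=O(v_i^2/\alphac)$ and $|s_j|\le \epsilon_1/\sqrt{2\alphac}$ for $j\notin\cA_\alphac$, I would obtain
\[
2s_i \;\ge\; -K\,v_i^2 \;-\; O(\epsilon_3)\,v_i^2(1+1/\alphac) \;-\; \epsilon_3 \;-\; O(\epsilon_1)\,v_i^2/\alphac^{3/2},
\]
where $K$ is $\epsilon$-independent and depends only on $\nabla^2_{xx}\cL_o$ and the conditioning of $J_{\cA_\alphac}$. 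Setting $a_i=K/2$ and using $v_i^2\le c_i(x)+\epsilon_2\le 2\alphac$ then collapses the $\epsilon$-independent piece into $a_ic_i(x)$ and yields $s_i+a_ic_i(x)\ge -O(\epsilon_1/\alphac^{1/2}+\epsilon_2+\epsilon_3)$, as claimed.

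For the approximate SOC, given any $w$ with $\nabla c_i(x)^\top w=0$ for $i\in\cA_\alphac$, I would define $z_i=0$ for $i\in\cA_\alphac$ and $z_j=\nabla c_j(x)^\top w/(2v_j)$ for $j\notin\cA_\alphac$; the constraint $\nabla c_j^\top w = 2v_jz_j$ then holds automatically for all $j$. Substituting into \eqref{eq:Gbc.ssv.soc.approx} and using $\twonorm{z}^2=O(\twonorm{w}^2/\alphac)$ together with $\sum_{j}|s_j|z_j^2 = O(\epsilon_1)\twonorm{w}^2/\alphac^{3/2}$ gives $w^\top\nabla^2_{xx}\cL_o(x,s)w \ge -O(\epsilon_3+\epsilon_3/\alphac+\epsilon_1/\alphac^{3/2})\twonorm{w}^2$, matching the claimed $\epssoc$.

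The main obstacle is the Case-B construction in the primal-dual step: producing a single test direction $(w,z)$ that is feasible for the SOC constraint $\nabla c_j^\top w = 2v_jz_j$, singles out the $i$-th coordinate of $s$, and handles both $v_i=0$ and $v_i\ne 0$ inside $\cA_\alphac$ without special-casing the rest of the argument. One must then track $\alphac$- and $\epsilon$-dependence carefully enough so that the $\epsilon$-independent $O(v_i^2)$ contribution from $w^\top\nabla^2_{xx}\cL_o w$ is absorbed into $a_ic_i(x)$ via $v_i^2\le c_i(x)+\epsilon_2$, while all remaining terms collapse into the advertised $\epspd$.
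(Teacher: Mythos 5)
Your proposal is correct and follows essentially the same route as the paper's proof: the same treatment of the first-order, feasibility, and complementarity conditions, the same split on $i\in\cA_{\alphac}$ versus $i\notin\cA_{\alphac}$ with $a_i=0$ off the active set, the same pseudoinverse test direction $w=2v_i J_{\cA_{\alphac}}^{\dagger}e_i$ with $z_{\cA_{\alphac}}=e_i$ (the paper's $\eta_0,\xi_0$ construction, which already subsumes your $v_i=0$ special case), the same absorption of the Hessian quadratic form into $a_i c_i(x)$ via $c_i(x)\ge v_i^2-\epsilon_2$, and the same choice of $z$ for the approximate second-order condition. The only cosmetic difference is that the paper bounds $v_i^2\le\infnorm{c(x)}+1$ rather than $2\alphac$ and takes $a_i=2\max\{0,\eta_0^\top\nabla^2_{xx}\cL\,\eta_0\}$ explicitly, which is what your constant $K$ amounts to.
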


Before proving this result, We state a corollary for the case of  $\epsilon_1 = \epsilon_2 =\epsilon_3=\epsilon$.
\begin{corollary}
Suppose a point $(x,v,s)$ is a $\epsilon_1 = \epsilon_2 =\epsilon_3=\epsilon$-approximate 2N point of \eqref{eq:ssv}, for \rev{$\epsilon \in (0,1]$} and that \rev{$\alphac$ satisfies $\alphac \ge 2 \epsilon$ but otherwise is independent of $\epsilon$} in \cref{thm: approximateSOC}.
\rev{Then if the conditions of \cref{thm: approximateSOC} are satisfied for this choice of $(\epsilon_1,\epsilon_2,\epsilon_3)$, we have  that the conclusions of \cref{thm: approximateSOC} hold with 
$\epsfoc$, $\epspf$, $\epscs$,  $\epspd$, and  $\epssoc$ all $O(\epsilon)$ in \cref{def:opt.f}.}
\end{corollary}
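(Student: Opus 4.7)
The plan is to apply \cref{thm: approximateSOC} verbatim with the specialization $\epsilon_1 = \epsilon_2 = \epsilon_3 = \epsilon$ and then simplify each of the five bounds in \eqref{eq:bw1}, using the fact that $\alphac$ is a fixed positive constant independent of $\epsilon$. First, I would verify that the hypotheses of the theorem are met by the corollary's assumptions: the input accuracies lie in $(0,1]$ because $\epsilon \in (0,1]$, the threshold condition $\alphac \geq 2\epsilon_2 = 2\epsilon$ is precisely the standing assumption of the corollary, and the full-row-rank assumption on $J_{\cA_{\alphac}}(x)$ is inherited from the hypothesis that the conclusions of \cref{thm: approximateSOC} apply.

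Next, I would substitute $\epsilon_1 = \epsilon_2 = \epsilon_3 = \epsilon$ line by line into \eqref{eq:bw1}. The first two bounds give $\epsfoc = \epsilon$ and $\epspf = \epsilon$ directly. The complementary-slackness bound yields $\epscs = O(\epsilon + \epsilon) = O(\epsilon)$. For the primal-dual feasibility bound,
\[
\epspd = O\!\left( \tfrac{\epsilon}{\alphac^{1/2}} + \epsilon + \epsilon \right),
\]
and because $\alphac$ is a fixed positive constant, the factor $1/\alphac^{1/2}$ is absorbed into the $O$-constant, leaving $\epspd = O(\epsilon)$. An identical treatment applied to
\[
\epssoc = O\!\left( \epsilon + \tfrac{\epsilon}{\alphac} + \tfrac{\epsilon}{\alphac^{3/2}} \right)
\]
collapses the expression to $O(\epsilon)$. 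Assembling these five statements gives the claim.

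The only subtle point, and the one worth being explicit about, is that the $O$-notation in \cref{thm: approximateSOC} already hides only quantities depending on $f$, $c(x)$, the constraint Jacobian, $s$, and $\nabla^2_{xx}\cL_o(x,s)$, and in particular does not hide anything depending on $\epsilon$ or on $\alphac$. Thus, once $\alphac$ is frozen at a value independent of $\epsilon$, the factors $\alphac^{-1/2}$, $\alphac^{-1}$, and $\alphac^{-3/2}$ legitimately fold into the $O$-constants without any blow-up as $\epsilon \to 0$. There is no genuine obstacle in this argument beyond tracking constants; the corollary is a direct specialization of \cref{thm: approximateSOC}.
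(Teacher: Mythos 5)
Your proposal is correct and matches the paper's treatment: the paper offers no separate proof, presenting the corollary as an immediate specialization of \cref{thm: approximateSOC} obtained by setting $\epsilon_1=\epsilon_2=\epsilon_3=\epsilon$ in \eqref{eq:bw1} and absorbing the fixed powers of $\alphac$ into the $O$-constants, exactly as you do. Your explicit remark that the hidden constants depend only on $c(x)$, the Jacobian, $s$, and $\nabla^2_{xx}\cL_o(x,s)$ (and not on $\epsilon$ or $\alphac$) is the right justification and is consistent with the theorem's stated convention.
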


\begin{proof}[Proof of \Cref{thm: approximateSOC}]
We first collect two inequalities on $v$ for future use. 
\rev{Using \eqref{eq: Gbc.ssv.kkt.approx.B} and
% $\twonorm{c(x)-v\odot v}\leq \epsilon_2$ and that 
$\cA_{\alphac}$ defined in \eqref{eq:defAc}, we have}
\begin{subequations}
\begin{alignat}{2}
&v_i^2\leq \epsilon_2 +\alphac,\quad && \text{for all}\quad i \in \cA_{\alphac},\label{eq: UpperBoundOnActiveV}\\ 
&v_i^2\geq -\epsilon_2 + \alphac,\quad && \text{for all}\quad i \notin \cA_{\alphac}.\label{eq: lowerBoundOnInactiveV}
\end{alignat}
\end{subequations}

\rev{We verify the conditions in \cref{def:opt.f} in turn.}
\paragraph{First order condition \cref{eq: Gbc.kkt.approx.A}} 
\rev{The condition $\twonorm{\nabla_x \cL_o(x,s)}\leq \epsfoc = \epsilon_1$ is immediate from \eqref{eq: Gbc.ssv.kkt.approx.A} when we note that $\nabla_x \cL (x,v,s) = \nabla_x \cL_o(x,s)$.
}

\paragraph{Feasibility of $x$ \cref{eq: Gbc.kkt.approx.B}} 
\rev{Setting $\epspf=\epsilon_2>0$, the claim \cref{eq: Gbc.kkt.approx.B} holds immediately when $c_i(x) \ge 0$.
For $i$ such that $c_i(x)<0$, we have from \eqref{eq: Gbc.ssv.kkt.approx.B} that $c_i(x) \ge c_i(x) - v_i^2 \ge -\epsilon_2 = -\epspf$, so the claim holds in this case as well}

\paragraph{Complementary slackness \cref{eq: Gbc.kkt.approx.C}} 
%First note that the complementary slackness measured by $\twonorm{s \odot c(x)}$ differs from 
%$\twonorm{s\odot v}$ by a factor of $v$ if $c(x) = v\odot v$. 
\rev{From $\twonorm{c(x)-v\odot v}\leq \epsilon_2$,  we have
\begin{equation}\label{eq: norm_v}
\infnorm{v} \leq \max_{1\leq i\leq m }\{ \sqrt{|c_i(x)|+\epsilon_2}\},
\end{equation}
% \footnote{We denote the infinity norm $\infnorm{z} = \max_{1\leq i\leq n}\{|z_i|\}$ for any $z\in \mathbb{R}^n$.}
Hence, we have 
\begin{equation}
\begin{aligned} 
\twonorm{s \odot c(x)}
&\leq \twonorm{s \odot v\odot v} + \twonorm{s\odot (c(x) - v\odot v)}\\
& \leq \twonorm{s\odot v}\infnorm{v} + 
\twonorm{c(x) - v\odot v} \infnorm{s}
\\
& \leq \tfrac{1}{2}\epsilon_1\max_{1\leq i\leq m}\sqrt{|c_i(x)|+\epsilon_2}+ \epsilon_2 \infnorm{s},
\end{aligned} 
\end{equation}
where in the final inequality we use \eqref{eq: norm_v}, the inequality 
$2\twonorm{s\odot v}\leq \epsilon_1$ from \eqref{eq: Gbc.ssv.kkt.approx.A}, and \eqref{eq: Gbc.ssv.kkt.approx.B}.
}

\paragraph{Primal-dual feasibility \cref{eq: Gbc.kkt.approx.D}} 
\rev{We consider first the indices $i \not\in \Amap_{\alphac}$. Using 
$|2s_{i}||v_{i}|\leq  \twonorm{2s\odot v} \leq  \epsilon_1$ from \eqref{eq: Gbc.ssv.kkt.approx.A}, together with the inequality \eqref{eq: lowerBoundOnInactiveV}, we know 
\begin{equation}\label{eq: dualfeasinactive}
|s_{i}|\leq \frac{\epsilon_1}{2\sqrt{\alphac-\epsilon_2}} 
\le \frac{\epsilon_1}{\sqrt{2 \alphac}},
\end{equation}
where the final bound follows from $\alphac \ge 2 \epsilon_2 \Rightarrow \alphac-\epsilon_2 \ge \tfrac12 \alphac$.}
%When $c_i(x) \ge 0$, we have $c_i(x) + s _i \geq -\frac{\epsilon_1}{\sqrt{2\alphac}}$, while for  $c_i(x) <0$, we have from \eqref{eq: Gbc.ssv.kkt.approx.B} that $c_i(x) \ge c_i(x) - v_i^2 \ge -\epsilon_2$, so that $c_i(x) + s _i \geq -\frac{\epsilon_1}{\sqrt{2\alphac}} - \epsilon_2$. In both cases, \cref{eq: Gbc.kkt.approx.D} is satisfied with $a_i=1$ for a choice of $\epspd$ satisfying \eqref{eq:bw1}.
%}
%
\rev{Thus, $s_i \ge -\frac{\epsilon_1}{\sqrt{2 \alphac}}$ for all $i \not\in \Amap_{\alphac}$, so \cref{eq: Gbc.kkt.approx.D} is satisfied with $a_i=0$ for a choice of $\epspd$ satisfying \eqref{eq:bw1}.}

\rev{We now examine the indices $i\in \cA_{\alphac}$. 
For any $a_i\geq 0$, from \eqref{eq: Gbc.ssv.kkt.approx.B}, we have  
\begin{equation}\label{eq: pdsv}
    s_i + a_i c_i(x) 
    \geq  s_i + a_i v_i^2 -a_i\epsilon_2.
\end{equation}
Thus, our task is to define $a_i \ge 0$ and find a lower bound on $s_i +a_i v_i^2$. 
We define $J(x) := [\nabla c_j^\top (x)]_{j=1}^m$, the constraint Jacobian of \eqref{eq:f} at $x$. 
Recalling $J_{\cA_{\alphac}}(x) = [\nabla c_j^\top (x)]_{j \in \cA_{\alphac}}$, let
$J_{\cA^c_{\alphac}}(x) = [\nabla c_j^\top (x)]_{j\in \cA_{\alphac}^c}$ be the its complement in $J(x)$. 
We partition the constraint Jacobian matrix $\JSSV(x,v)$ of \eqref{eq:ssv} as follows:
\begin{equation} \label{eq:JSSV}
    J_{\text{SSV}}(x,v) := \begin{bmatrix}
    J_{\cA_{\alphac}}(x) & -2\diag(v_{\cA_{\alphac}}) &  0 \\ 
    J_{\cA^c_{\alphac}}(x) & 0  &  -2\diag(v_{\cA_{\alphac}^c}) 
    \end{bmatrix}.
\end{equation}
Then the approximate second order condition for the SSV problem is that for all $(w,z)$ with $J(x)w = 2\diag (v)z$, there holds the inequality \eqref{eq:Gbc.ssv.soc.approx}. Also, introduce 
the notations\footnote{Here $J_{\cA_{\alphac}}^\dagger = J_{\cA_{\alphac}}^T (J_{\cA_{\alphac}}J_{\cA_{\alphac}}^T)^{-1}$, which is well defined as  $J_{\cA_{\alphac}}(x)$ is full-rank.} 
\begin{equation}\label{eq: approx_soc_notations}
\eta_0 =J_{\cA_{\alphac}}^\dagger e_i,\quad \xi_0 = J_{\cA^c_{\alphac}}\eta_0,\quad \text{and}\quad a_i = 2\max\{0, 
\eta_0 ^\top \nabla ^2 _{xx}\mathcal{L} (x,v,s) \eta_0\}.
\end{equation}
We shall utilize the inequality \eqref{eq:Gbc.ssv.soc.approx} to obtain a lower bound on $s_i + a_i v_i^2$. 
To this end, set $z_{\cA_{\alphac}} = e_i$, 
 %From $J_{\cA_{\alphac}}(x)w = 2\diag %(v_{\cA_{\alphac}})z_{\cA_{\alphac}}$,
 %we could set 
 %This choice of $w$ then 
 $z_{\cA^c_{\alphac}}=  v_i \diag(v_{\cA^c_{\alphac}}^{-1})\xi_0$, 
 and $w =  2v_i\eta_0$.
 It can be verified such $(w,z)$ satisfies 
 $J(x)w = 2\diag (v)z$ using \eqref{eq: approx_soc_notations}.  With this choice of $(w,z)$, the left hand side of \eqref{eq:Gbc.ssv.soc.approx} is 
 \begin{equation}
2s_i + 2\sum_{j\in \cA_{\alphac}^c}s_j z_j^2+ 4v_i^2 \eta_0^T \nabla_{xx}^2 \cL(x,v,s)\eta_0 \leq 2s_i + 2\sum_{j\in \cA_{\alphac}^c}s_j z_j^2 + 2 a_i v_i^2.
 \end{equation}
 Combining the above bound and the condition \eqref{eq:Gbc.ssv.soc.approx} with the choice of $(w,z)$ above,  and with some rearranging, we obtain
 \begin{equation}
\begin{aligned} \label{eq: dualSlackapproximate}
    2s_i + 2 a_i v_i^2
    \ge & -\epsilon_3 (\twonorm{w}^2 +\twonorm{z}^2)
    - 2\sum_{j\in \cA_{\alphac}^c}s_j z_j^2 \\
     \ge & 
     - \epsilon_3 \left(4v_i^2 \twonorm{\eta_0}^2 + 1+ \big\|z_{{\cA^c_{\alphac}}} \big\|_2^2\right)
     - \frac{\sqrt{2}\epsilon_1}{
     \sqrt{\alphac}} \big\|z_{{\cA^c_{\alphac}}}\big\|_2^2,
\end{aligned} 
\end{equation}
where the second inequality uses the bound on $s_j$, $j \in \cA^c_{\alphac}$ from \eqref{eq: dualfeasinactive}. 
From its definition $z_{\cA^c_{\alphac}}= v_i \diag(v_{\cA^c_{\alphac}}^{-1}) \xi_0$, we can bound $\big\|z_{\cA^c_{\alphac}} \big\|_2$ as follows:
 \begin{equation}\label{eq: zinactivebound}
 \big\|z_{\cA_{\alphac}^c}\big\|_2^2
 \le
 \frac{v_i^2}{\min_{j \in \cA^c_{\alphac}} v_j^2}\twonorm{\xi_0}^2 
\le
\frac{\alphac+\epsilon_2}{\alphac-\epsilon_2}  \twonorm{\xi_0}^2 
\le 3 \twonorm{\xi_0}^2,
 \end{equation}
 where the second inequality is due to  the bound on $v$ in \eqref{eq: UpperBoundOnActiveV} and \eqref{eq: lowerBoundOnInactiveV}, while the final inequality follows from $\alphac \ge 2 \epsilon_2$, which implies $\alphac + \epsilon_2 \le  3\alphac/2$ and $\alphac-\epsilon_2 \ge \alphac/2$. We may also bound $v_i^2 \leq \infnorm{c(x)}+ \epsilon_2\leq \infnorm{c(x)}+ 1$ from \eqref{eq: norm_v} and $\epsilon_2 \leq 1$.
Thus, by combining this bound with \eqref{eq: dualSlackapproximate}, \eqref{eq: pdsv}, and \eqref{eq: zinactivebound}, we see the primal-dual inequality \eqref{eq: Gbc.kkt.approx.D} holds with the value of $a_i$ defined in \eqref{eq: approx_soc_notations} and 
for $\epspd$ defined as follows,
 \begin{equation}
\begin{aligned} \label{eq: dualSlackapproximate_two}
    \epspd:=
     \frac{\epsilon_3}{2} \left(4(\infnorm{c(x)}+1) \twonorm{\eta_0}^2 + 1+ 3\twonorm{\xi_0}^2\right)
     + \frac{3\sqrt{2}\epsilon_1}{2
    \sqrt{\alphac}}\max\{\norm{\xi_0}_2^2 ,1\}+a_i \epsilon_2.
\end{aligned} 
\end{equation} 
Note that $\epspd = O({\epsilon_1}/\sqrt{\alphac} +\epsilon_2 + \epsilon_3)$, as claimed
% by recalling $\eta_0$, $\xi_0$, and $a_i$ defined in \eqref{eq: approx_soc_notations}.
}

\paragraph{Approximate positive semidefiniteness of $\nabla^2_{xx}\mathcal{L}_o(x,s)$ over a subspace of \eqref{eq:defAc}}
\rev{}
For any $w$ s.t. $\nabla c_i(x)^\top  w=0$ for $i\in \cA_{\alphac}$, we 
take $z_i =\frac{ \nabla c_i^\top  w}{2v_i}$ for $i\in \cA^c_{\alphac}$, and set $z_i= 0$ for  $i\in \cA_{\alphac}$. Using this $z$ in the condition \eqref{eq:Gbc.ssv.soc.approx} of \eqref{eq:ssv}, we have 
 \begin{equation}\label{eq: SOCapproxSSVApproxOriginalStep1}
 \begin{aligned} 
w^T \nabla^2_{xx}\cL (x,v,s ) w 
& \ge -\epsilon_3\twonorm{w}^2- 2\sum_{i=1}^m (s_i+\epsilon_3) z_i^2 \\
&= 
-\epsilon_3\twonorm{w}^2- 2\sum_{i \in \cA^c_{\alphac}}(s_i+\epsilon_3)\frac{(\nabla c_i^\top  w)^2}{2v_i^2}.
\end{aligned}
\end{equation}
 \rev{From \eqref{eq: dualfeasinactive}, we have $|s_i|\leq \frac{\epsilon_1}{\sqrt{2\alphac}}$ for any $i \not \in \mathcal{A}_{\alphac}$, while \eqref{eq: lowerBoundOnInactiveV} implies that 
 $v_i^2 \geq\alphac -\epsilon_2 \ge \alphac/2$ for $i\in \cA_{\alphac}^c$}. 
 Thus, by noting that $\nabla^2_{xx}\cL(x,v,s) = \nabla^2_{xx}\cL_0(x,s)$,  we conclude from \eqref{eq: SOCapproxSSVApproxOriginalStep1} that 
\begin{equation}
\begin{aligned} \label{eq: approximateSOC}
 w^T \nabla^2_{xx}\cL_o (x,s ) w \geq -\left(\epsilon_3 + \norm{ J_{\cA_{\alphac}^c}}^2 \left( \frac{2\epsilon_3}{\alphac}+\frac{\sqrt{2} \epsilon_1}{\alphac^{3/2}} \right)\right)\norm{w}^2,
\end{aligned} 
\end{equation}
\rev{verifying our claim about \eqref{eq:Gbc.soc.approx} and completing the proof.}
\end{proof}

\begin{remark}
\rev{If there are equality constraints in Problem \eqref{eq:f}, the conclusion of Theorem \ref{thm: approximateSOC} remains valid, with minor modifications to the proof.
% The extra assumption needed is that the Jacobian for the equality and active constraints needs to have full row rank.
}
\end{remark}

\begin{remark}
It is also possible to add squared-slack variables for just some (not all) inequality constraints. 
The 1N and 2N conditions can be defined accordingly, and the results about (approximate) 2N points in this section and the previous section continue to hold.
\end{remark}

\subsection{Constraint qualifications}
\label{sec:cq}

Our results of the previous sections discuss the relationships between optimality conditions for the original and squared-variable formulations. 
To ensure that points that satisfy these conditions are actually local solutions of the problems in question, we need constraint qualifications to hold.
In this section, we discuss how constraint qualifications (CQ) for the original and squared-variable formulations are related.
Note that in \cref{thm: approximateSOC}, a kind of linear independence constraint qualification (LICQ) was required to show that approximate 2N points for \eqref{eq:ssv} are approximate 2N points for \eqref{eq:f}.

We denoted the constraint Jacobian for  \eqref{eq:ssv} as $\JSSV$, defined as
\begin{equation}\label{eq: JSSV}
    J_{\text{SSV}} (x,v) := \begin{bmatrix}
    J(x) & -2\diag(v)
    \end{bmatrix} \in \real^{m \times (n+m)}
\end{equation}
where $J(x) \in \real^{m\times n}$ is the Jacobian matrix for the vector function $c$ at $x$. 
We denote the set of indices for active constraints of a feasible $x$ as $\mathcal{A}_0 = \{i\mid  c_i(x) =0\}$. The partitioned version of $\JSSV$ in \eqref{eq:JSSV} remains valid with $\mathcal{A}_{\alphac}$ replaced by $\mathcal{A}_0$.

\subsubsection{LICQ and MFCQ for \eqref{eq:f} and \eqref{eq:ssv}}
% Let us consider LICQ  and MFCQ for \eqref{eq:f} and \eqref{eq:ssv}. We start with LICQ. 
\rev{Recall that LICQ for \eqref{eq:f} means that $\nabla c_i(x)$, $i\in \mathcal{A}_0$ are linearly independent vectors (or equivalently, $J_{\mathcal{A}_0}(x)$ has full row rank), while LICQ for  \eqref{eq:ssv} means that $\JSSV(x,v)$ has full row rank. 
For $\alphac=0$ that $v_i=0$ for $i \in \cA_0$ and $v_i \neq 0$ for $i \in \cA_0^c$, so  from \eqref{eq:JSSV}, we have 
\[
    J_{\text{SSV}}(x,v) := \begin{bmatrix}
    J_{\cA_0}(x) & 0 &  0 \\ 
    J_{\cA_0^c}(x) & 0  &  -2\diag(v_{\cA_0^c}) 
    \end{bmatrix}.
\]
Since $\diag(v_{\cA_0^c})$ is a nonsingular diagonal matrix, it is clear that $J_{\text{SSV}}(x,v)$ has full row rank if and only if $J_{\cA_0}(x)$ has full row rank. 
Thus the LICQ conditions are identical for \eqref{eq:f} and \eqref{eq:ssv}.
% From \eqref{eq:JSSV}, for any index $i\not \in \mathcal{A}_{0}$, we have $v_i>0$. 
% Hence, from the partitioned form of $\JSSV(x,v)$ in \eqref{eq:JSSV}, we see that row $i$ (and no other rows) has a nonzero entry in $n+i$-th column, for $i\not \in \mathcal{A}_{0}$, so row $i$ is linearly independent of all other rows.
% Thus in relating LICQ between \eqref{eq:f} and \eqref{eq:ssv}, we only need to consider the rows $i \in \mathcal{A}_{0}$, which are linearly independent if and only if $\nabla c_i(x)$, $i \in \mathcal{A}_{0}$ is a linearly indepedent set. 
% In summary, LICQ holds at a feasible point $x$ for \eqref{eq:f} if and only if LICQ  holds at $(x,v)$ where  $v_i^2 = c_i(x)$, for \eqref{eq:ssv}.}
}

The situation is quite different for the Mangasarian-Fromovitz constraint qualification (MFCQ) 
\rev{\cite[Definition 12.5]{nocedal1999numerical}.}
Since all constraints in \eqref{eq:ssv} are equalities, MFCQ is identical to LICQ for this problem.
For \eqref{eq:f}, by constrast, MFCQ is a less demanding condition.

\begin{comment}
Suppose MFCQ holds for \eqref{eq:f}, this means that there exists a vector $w$ such that
\begin{equation}\label{eq: MFGbc}
    \nabla c_i(x)^\top w >0, \quad \makebox{for all $i$ such that  $c_i(x) =0$.}
\end{equation}
% Suppose the above inequality  also holds for \eqref{eq:ssv}. 
%We cannot conclude linear independence between the rows $i$ of $J$ for which $v_i=0$. 
However, MFCQ does not guarantee the second order stationarity condition  of a local minimizer $x$ of \eqref{eq:f}, see \cite{anitesc$*$Udegenerate}. To avoid this situation \cite{andreani2007second}, we add the weak constant rank condition (WCR) at a point $x$, i.e., 
\begin{equation} 
\text{the rank of $\{\nabla c_i(y), c_i(x) \;\text{is active}\}$ does not change for all $y$ near $x$.}
\end{equation}
\end{comment} 

We summarize the relationship between these constraint qualifications as follows:
\begin{multline*}
\text{LICQ for \eqref{eq:f} $\iff$ LICQ for \eqref{eq:ssv} $\iff$ MFCQ for \eqref{eq:ssv}} \\
\text{$\implies$} \text{\rev{MFCQ for \eqref{eq:f}.}}
%& \text{MFCQ for \eqref{eq:f} $\iff$ \eqref{eq: MFGbc} holds for \eqref{eq:ssv}.}  
\end{multline*}
% \sw{I added the last part.}

\subsubsection{CQ for \eqref{eq:f} and local minimizer of \eqref{eq:ssv}}
Since we aim to solve \eqref{eq:f} via the reformulation \eqref{eq:ssv}, it is natural to pose CQ on \eqref{eq:f} for local solutions $x$ of \eqref{eq:f} instead of on \eqref{eq:ssv} for its local solutions. Indeed, it is possible that a local solution of \eqref{eq:ssv} does not satisfy any common CQ. Yet, it is still a 2NP because the corresponding local solutions of \eqref{eq:f} is regular enough.

Fortunately, from Theorem \ref{thm: SOCequavalence}, posing CQ on local minimizers of \eqref{eq:f} so that they are weak 2N points of \eqref{eq:f} is enough for local minimizers \eqref{eq:ssv} to be 2N points of \eqref{eq:ssv}. Indeed, suppose $(x,v)$ is a local solution of \eqref{eq:ssv}. We aim to show that $(x,v)$ is a 2N point of \eqref{eq:ssv} based on CQ of \eqref{eq:f} for local solutions. Since $(x,v)$ is a local solution of \eqref{eq:ssv}, the point $x$ is a local solution of \eqref{eq:f}. \footnote{Indeed,
if $(x,v)$ is a local solution of \eqref{eq:ssv}, then $f(x)\leq f(y)$ for all feasible (in the sense of \eqref{eq:ssv}) $(y,w)\in B_x(\epsilon)\times B_v(\epsilon)$ . Here $B_x(\epsilon)$ and $B_v(\epsilon)$ are $\ell_2$ norm balls with centers $x$ and $v$ respectively with a small enough radius $\epsilon$. Thus, $f(x)\leq f(y)$ for all feasible (in the sense of \eqref{eq:f}) $y\in B_x(\epsilon_1)$ for some $\epsilon_1 \leq \epsilon$.} Next, 
from Theorem \ref{thm: SOCequavalence}, if {\em any} CQ guaranteeing 2N conditions is satisfied at a local solution $x$ of  \eqref{eq:f}\footnote{
For example, MFCQ and constant rank condition for \eqref{eq:f}, or the constraints for \eqref{eq:f} are simply linear. These CQs are weaker than or different from LICQ and guarantee that 1N and 2N conditions hold.}, then $(x,v)$ satisfies the 2N conditions for \eqref{eq:ssv}. Moreover, the Lagrangian multipliers are the same for the two problems.\footnote{We also note that any properties of Lagrange multipliers deduced from constraint qualification of \eqref{eq:f} continue to hold in \eqref{eq:ssv}.
}

%
% Here we would like to note that we only need constraint qualification for the original formulation \eqref{eq:f} to guarantee that local minimizers of \eqref{eq:ssv} are second order stationary. 
%

%Note that the Lagrange multiplier for  \eqref{eq:ssv} is also the multiplier for 
%\eqref{eq:f}. Hence, the multipliers for  \eqref{eq:ssv} are also bounded due to MFCQ for \eqref{eq:f}, see \cite{gauvin1977necessary}.

\begin{comment}
Perhaps a stronger condition than MFCQ but weaker condition than LICQ for \eqref{eq:f} is that for an open neighborhood of the feasible region, we have 
\begin{equation}\label{eq: LCCQ}
    \twonorm{\nabla c_\epsilon (x)^\top c_\epsilon(x)} \geq \mu \twonorm{c_\epsilon(x)}, \quad I = \{i\mid |c_i(x)|\leq \epsilon\}.
\end{equation}
\end{comment}

\subsubsection{Equality and inequality constraints}
Suppose there are both equality and inequality constraints, denoted by
\begin{equation}\label{eq: constraints}
    c_i(x) = 0, \quad i\in \cE, \quad c_i(x)\geq 0, \quad i \in \cI.  
\end{equation}
The Jacobian of the constraints for the SSV version is (with self-explanatory notation)
\begin{equation}
    J_{\text{SSV}} : \, = \begin{bmatrix}
    J_{\mathcal{\cE}}(x)  & 0 \\ 
    J_{\mathcal{\cI}}(x) & - 2\diag(v)
    \end{bmatrix} \in \RR^{(|\cI|+|\cE|)\times  (n+|\cI|+|\cE|)}.
\end{equation}
If the constraint qualification requires the active constraints to be linearly independent, then $J_{\text{SSV}}$ also has linearly independent rows.

For the special case in which the inequality constraints are all bounds (that is, $c_i(x) = x_i \ge 0$ for $i =1,2,\dotsc,n$ and $\cI = \{1,2,\dotsc,n\}$), we may use direct substitution $x = v\odot v$. In this case, the Jacobian from the direct substitution approach is 
\begin{equation}
    J_{\text{DS}} (v) :\, = 
2  J_{\mathcal{E}}(v\odot v) 
 \diag(v) \in \RR^{|\cE|\times n}.
\end{equation}
Hence LICQ for \eqref{eq:f} again implies LICQ for \eqref{eq:ssv}.

\subsection{Implications for algorithmic complexity}
\label{sec:complex}

The literature on algorithmic complexity for solving equality-constrained problems is vast. 
Here we consider the algorithm ProximalAL \cite[Algorithm 2]{xie2021complexity}, which takes the augmented Lagrangian approach and uses Newton-CG procedure \cite{royer2020newton} as a subroutine. 
The main result \cite[Theorem 4]{xie2021complexity} measures the complexity in terms of the total number of iterations of the Newton-CG procedure, whose main cost for each iteration is \rev{the cost of a matrix-vector product involvng the Hessian (with respect to $(x,v)$) of the function} $\cL(x,v,s) + \rho \frac{\|c(x)-v\odot v\|^2}{2}$ for some $\rho>0$.
By applying this result to the SSV problem \eqref{eq:ssv}, we obtain that within $O(\epsilon^{-7})$ iterations of Newton-CG procedure, ProximalAL outputs an $(O(\epsilon),O(\epsilon),O(\epsilon))$-approximate 2N point  $(x,s, v)$ of  the SSV formulation \eqref{eq:ssv}. 
Hence, by Theorem \ref{thm: approximateSOC}, we know the same $(x,s)$ is an approximate 2N point for  \eqref{eq:f} with approximation measure  $\epsfoc = \epspf = \epscs = \epspd = \epssoc = O(\epsilon)$,  \rev{provided that the full-rank condition on the active constraint Jacobian $J_{\cA_{\alphac}}(x)$ holds}.

\section{Linear programming: SSV and interior-point methods} \label{sec:lpssv}

Here we \rev{describe} a squared-variable reformulation for linear programming in standard form, and discuss solving it with sequential quadratic programming (SQP). 
Steps produced by this approach are closely related to those from a standard primal-dual interior-point method.
Computational performance and comparisons between the two approaches are given in \Cref{sec:computation}.

\subsection{SSV formulation}

Consider the LP in standard form for variables $x \in \real^n$:
\begin{equation}
    \label{eq:lp} \tag{LP}
    \min_x \, c^Tx \quad \mbox{s.t.  $Ax=b$, $x \ge 0$,}
\end{equation}
and suppose this problem is feasible and bounded (and hence has a primal and dual solution, by strong duality).
We introduce variables $v \in \real^n$ to obtain the following nonlinear equality constrained problem:
\begin{equation}
    \label{eq:lp.ssv} \tag{LP-Sq}
    \min_{x,v} \, c^Tx \quad \mbox{s.t. $Ax=b$, $x - v \odot v=0$.}
\end{equation}
The KKT conditions for \eqref{eq:lp.ssv} are that there exist Lagrange multipliers $\lambda \in \real^m$ and $s \in \real^n$ such that
\begin{subequations} \label{eq:lp.kkt}
\begin{align}
\label{eq:lp.kkt.1}
    A^T \lambda + s - c &= 0, \\
    \label{eq:lp.kkt.2}
    Ax - b &=0, \\
    \label{eq:lp.kkt.3}
    x - v \odot v &= 0, \\
    \label{eq:lp.kkt.4}
    2 s \odot v &=0.
\end{align}
\end{subequations}
Consistent with the conventions in interior-point methods, we use the notation
\[
S := \diag (s), \quad V := \diag(v), \quad \text{and}\quad \rev{X:=\diag(x).}
\]
The Lagrangian is
\begin{equation}
    \label{eq:lp.lagr}
    \cL(x,v,\lambda,s) = c^Tx -\lambda^T(Ax-b) - s^T(x-v \odot v).
\end{equation}
%\end{equation}
The Jacobian of the (primal) constraints is
%\begin{equation}
%    \label{eq:lp.cj}
$
J_{\text{LP-Sq}}:\,=    \left[ \begin{matrix} A & 0  \\ I & -2 V \end{matrix} \right],
$
%\end{equation}
and its  null space is characterized as follows:
\begin{align}
    \label{eq:cj.null}
   \cN & :=  \nullspace (J_{\text{LP-Sq}}) =
    \left\{ \left[ \begin{matrix} d \\ g \end{matrix} \right]  \, : \, d \in \nullspace (A), \;
    v_i=0 \Rightarrow d_i=0, \; v_i \neq 0 \Rightarrow g_i = \frac{d_i}{2v_i} \right\}.
\end{align}
Note that for indices $i$ for which $v_i=0$, the component $g_i$ of any vector $\left[ \begin{matrix} d \\ g \end{matrix} \right]  \in \cN$ is arbitrary. The Hessian of $\cL$ with respect to its primal variables is
%\begin{equation}
%    \label{eq:lp.lagr.h}
$
    \nabla^2_{(x,v),(x,v)} \cL(x,v) = \left[ \begin{matrix} 0 & 0 \\ 0 & 2 S \end{matrix}  \right].
$ 
Thus, the 2N conditions for \eqref{eq:lp.ssv} require that 
\rev{
\begin{equation}
\label{eq:lp.2o}
\left[ \begin{matrix} d \\ g \end{matrix} \right]^T  \left[ \begin{matrix} 0 & 0 \\ 0 & 2 S \end{matrix}  \right]
\left[ \begin{matrix} d \\ g \end{matrix} \right] \ge 0
\Leftrightarrow 
\sum_{i=1}^n s_i g_i^2 \ge 0, \quad \mbox{for all } \left[ \begin{matrix} d \\ g \end{matrix} \right] \in \cN.  
\end{equation}
For any index $i$ s.t. $v_i =0$, we set $d=0$ and $g =e_i$ in \eqref{eq:cj.null}, 
%and $g = e_i$ 
%\[
%\left[ \begin{matrix} 0 \\ g \end{matrix} \right]  \in \cN \Rightarrow g_i=0  \Rightarrow s_i g_i^2 =0 \;\; \mbox{for all $i$ with $v_i \neq 0$,}
%\]
%so that \eqref{eq:lp.2o} is satisfied in the case $v_i \neq 0$. 
%For indices $i$ with $v_i=0$, since $g_i$ is arbitrary for such $i$, 
the conditions  \eqref{eq:lp.2o} then imply that $s_i \ge 0.$ 
}

\begin{theorem} \label{th:lp.ssv} \ 
% For \eqref{eq:lp} and \eqref{eq:lp.ssv}, we have the following two statements.
\begin{itemize}
    \item[(a)] Let $(\bar{x},\bar{v},\bar{\lambda},\bar{s})$ satisfy first-order conditions \eqref{eq:lp.kkt} and second-order necessary conditions \eqref{eq:lp.2o} for \eqref{eq:lp.ssv}. Then $\bar{x}$ is a solution of \eqref{eq:lp} and $(\bar\lambda,\bar{s})$ is a solution of the dual problem 
\begin{equation} \label{eq:lp.dual} \tag{LP-Dual}
    \max_{\lambda,s} \, b^T \lambda \quad \mbox{s.t. } A^T \lambda +s = c, \;\; s \ge 0.
\end{equation}
\item[(b)] Suppose that $\bar{x}$ solves \eqref{eq:lp} and $(\bar\lambda,\bar{s})$ solves \eqref{eq:lp.dual}. Then defining $\bar{v}_i = \pm \sqrt{\bar{x}_i}$, $i=1,2,\dotsc,n$, we have that $(\bar{x},\bar{v},\bar\lambda,\bar{s})$ satisfies second-order conditions to be a solution of \eqref{eq:lp.ssv}.
\end{itemize}
\end{theorem}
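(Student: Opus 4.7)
The two parts of this theorem amount to showing that the KKT and 2N conditions for \eqref{eq:lp.ssv} are equivalent to the KKT conditions for the LP primal-dual pair \eqref{eq:lp}--\eqref{eq:lp.dual}, which (by LP strong duality) characterize optimality. My plan is to translate each condition directly, with the 2N condition playing the sole role of forcing the correct sign on $\bar s$.

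For part (a), I start from \eqref{eq:lp.kkt}. The equations \eqref{eq:lp.kkt.1} and \eqref{eq:lp.kkt.2} are already dual feasibility (up to sign of $\bar s$) for \eqref{eq:lp.dual} and primal feasibility of $A\bar x = b$ for \eqref{eq:lp}. Equation \eqref{eq:lp.kkt.3} gives $\bar x = \bar v \odot \bar v \geq 0$, supplying the remaining primal feasibility. Combining \eqref{eq:lp.kkt.3} and \eqref{eq:lp.kkt.4} yields $\bar s \odot \bar x = \bar s \odot (\bar v \odot \bar v) = (\bar s \odot \bar v) \odot \bar v = 0$, which is complementary slackness for the LP primal-dual pair. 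The only remaining obligation is $\bar s \geq 0$: for indices $i$ with $\bar v_i \neq 0$, \eqref{eq:lp.kkt.4} already forces $\bar s_i = 0$; for indices with $\bar v_i = 0$, I invoke the 2N condition \eqref{eq:lp.2o} with the choice $d = 0$, $g = e_i$, which lies in $\mathcal{N}$ by the characterization \eqref{eq:cj.null} (the constraint $d_i = 0$ when $v_i = 0$ is satisfied, and $g_j$ is unconstrained when $v_j = 0$). This yields $\bar s_i \geq 0$. Thus $(\bar x, \bar \lambda, \bar s)$ is primal-dual feasible and complementary for the LP pair, so by LP strong duality it is optimal.

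For part (b), I define $\bar v_i = \pm \sqrt{\bar x_i}$. The KKT conditions \eqref{eq:lp.kkt} for \eqref{eq:lp.ssv} follow immediately: \eqref{eq:lp.kkt.1} and \eqref{eq:lp.kkt.2} are the dual and primal feasibility of the LP pair; \eqref{eq:lp.kkt.3} holds by construction; and \eqref{eq:lp.kkt.4} follows because LP complementarity $\bar s_i \bar x_i = 0$ together with $\bar v_i^2 = \bar x_i$ forces $\bar s_i \bar v_i = 0$ componentwise. For the 2N condition \eqref{eq:lp.2o}, I take any $\left[\begin{smallmatrix} d \\ g \end{smallmatrix}\right] \in \mathcal{N}$ and split the sum $\sum_i \bar s_i g_i^2$ by cases: if $\bar v_i \neq 0$ then $\bar x_i > 0$ and so $\bar s_i = 0$ by LP complementarity; if $\bar v_i = 0$ then $\bar s_i \geq 0$ by LP dual feasibility. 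In both cases the summand is nonnegative, giving \eqref{eq:lp.2o}.

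There is no real obstacle here; both directions are essentially bookkeeping once one notices that the only nontrivial content of the 2N condition is the sign constraint on those components $\bar s_i$ whose ``complementary'' partner $\bar v_i$ vanishes. In both directions this is exactly the role played by the unconstrained $g_i$ coordinates in the description \eqref{eq:cj.null} of $\mathcal{N}$. The one subtle point I would emphasize in writing is that LP strong duality (needed in part (a)) requires the feasibility and boundedness assumption already imposed on \eqref{eq:lp}, so this hypothesis is genuinely used.
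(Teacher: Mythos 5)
Your proof is correct and follows essentially the same route as the paper's: the key step in both is to take $d=0$, $g=e_i$ in the null-space characterization \eqref{eq:cj.null} to extract $\bar{s}_i \ge 0$ from \eqref{eq:lp.2o} when $\bar{v}_i = 0$, with the rest being direct verification of the LP primal-dual optimality conditions. One minor remark: the final appeal to strong duality is unnecessary — once you have primal feasibility, dual feasibility, and $\bar{s}^T\bar{x}=0$, the objectives coincide and weak duality alone certifies optimality of both, so the feasibility/boundedness hypothesis is not actually load-bearing in part (a).
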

\begin{proof}
(a) From \eqref{eq:lp.kkt.3} we have that $\bar{x} \ge 0$. Moreover from  \eqref{eq:lp.kkt.3} and \eqref{eq:lp.kkt.4} we have that $\bar{x}_i >0 \Rightarrow \bar{v}_i \neq 0 \Rightarrow \bar{s}_i=0$. We showed above, invoking second-order conditions, that $\bar{x}_i=0 \Rightarrow \bar{v}_i=0 \Rightarrow \bar{s}_i \ge 0$. Hence, we have shown  that the complementarity conditions $0 \le \bar{x} \perp \bar{s} \ge 0$  hold.
Since in addition we have $A^T \bar\lambda + \bar{s}=c$ and $A \bar{x} = b$ (from \eqref{eq:lp.kkt.1} and \eqref{eq:lp.kkt.2}), it follows that all conditions for $\bar{x}$ to be a primal solution of \eqref{eq:lp} and for $(\bar\lambda,\bar{s})$ to be a dual solution are satisfied.

For (b), note that \eqref{eq:lp.kkt} are satisfied when $\bar{v}$ is defined as given. 
The second-order conditions \eqref{eq:lp.2o} are satisfied trivially, since $\bar{s}_i \ge 0$.
\end{proof}

\subsection{SQP applied to SSV, and relationship to interior-point methods}
\label{sec:sqplp}

Consider SQP applied to the SSV formulation \eqref{eq:lp.ssv}.
The step  $(\Dx,\Dv,\Dlambda,\Ds)$ satisfies
\begin{equation}
    \label{eq:lp.ssv.sqp}
    \begin{aligned}
    \min_{\Dx,\Dv} \, c^T \Dx + & \sum_{i=1}^n s_i \Dv_i^2 \\
    \mbox{s.t.} \quad  A \Dx & = (b-Ax), \quad \text{and}\quad 
 \Dx - 2 v \odot \Dv  = -(x-v\odot v).
    \end{aligned}
\end{equation}
with $\Dlambda$ and $\Ds$ coming from the Lagrange multipliers for the equality constraints in this subproblem.
Equivalently, we can linearize the KKT conditions \eqref{eq:lp.kkt} to obtain the following linear system for the SQP step:
\begin{subequations} \label{eq:lp.sqp}
\begin{align}
\label{eq:lp.sqp.1}
    A^T \Dlambda + \Ds &= r_\lambda := c - A^T\lambda - s, \\
    \label{eq:lp.sqp.2}
    A\Dx  &= r_x := b-Ax, \\
    \label{eq:lp.sqp.3}
    \Dx - 2V \Dv &= r_v := -x + v \odot v, \\
    \label{eq:lp.sqp.4}
    S \Dv + V \Ds &=r_{sv} := -s \odot v.
\end{align}
\end{subequations}
By a sequence of eliminations, we can reduce this linear system to the form $A DA^T \Dlambda = r$ for a certain diagonal matrix $D$ {\em provided} that $s$ has all components positive and $v$ has all components nonzero. 
(In fact, as we discuss below, it is easy to maintain positivity of all components of $s$ and $v$ at every iteration.)
More precisely, we have from \eqref{eq:lp.sqp} that
\begin{equation}\label{eq: lp.sqp.lambda}
    A(S^{-1} V^2)A^\top \Delta \lambda = \frac{1}{2} r_x + A(S^{-1} V^2 )r_\lambda - AS^{-1} Vr_{sv} - \frac{1}{2} Ar_v.
\end{equation}
\rev{Given $\Delta \lambda$, the other quantities $\Delta x$, $\Delta v$, and $\Delta s$ can be obtained from \eqref{eq:lp.sqp}.} 
We refer to the method based on this step as ``SSV-SQP."
%
%We note that in the special case that $x = v\odot v$ and $v>0$, then we can replace $\Delta v$ in \eqref{eq:lp.sqp.4} using \eqref{eq:lp.sqp.3} and obtain 
%\begin{equation}\label{eq:lp.sqp.5}
%    S\Delta x + 2X \Delta s = -2 s\odot x.
%\end{equation}
%

%% detailed steps and something on positivity of x and v being positive. 

\paragraph{Interior-point methods: PDIP and MPC}
Primal-dual interior-point (PDIP) methods are derived from the optimality conditions for \eqref{eq:lp}, which are 
\begin{subequations} \label{eq:lp.kktip}
\begin{align}
\label{eq:lp.kktip.1}
    A^T \lambda + s - c &= 0, \\
    \label{eq:lp.kktip.2}
    Ax - b &=0, \\
    \label{eq:lp.kktip.3}
    s \odot x &= 0, \quad x \ge 0, \; s \ge 0.
\end{align}
\end{subequations}
(Note the similarity to \eqref{eq:lp.kkt}.)
The search direction for a path-following PDIP method satisfies the following linear system:
\begin{subequations} \label{eq:lp.pdip}
\begin{align}
\label{eq:lp.pdip.1}
    A^T \Dlambda + \Ds &= r_\lambda := c - A^T\lambda - s, \\
    \label{eq:lp.pdip.2}
    A\Dx  &= r_x := b-Ax, \\
    \label{eq:lp.pdip.4}
    S \Dx + X \Ds &=r_{sx} := -s \cdot x + \sigma (s^Tx/n),
\end{align}
\end{subequations}
for some ``centering" parameter $\sigma\in [0,1]$. After a sequence of eliminations, we obtain
\begin{equation}\label{eq: lp.ip.lambda.update}
A(S^{-1} X) A^\top \Delta \lambda =   r_x + A(S^{-1}X)r_\lambda - AS^{-1}r_{sx}.  
\end{equation}

In \Cref{sec:lp_computation}, we compare this SSV-SQP method to the classic Mehrotra predictor-corrector (MPC), which forms the basis of most pract8cal interior-point codes for linear programming. 
This method was described originally in \cite{mehrotra1992implementation}; see also \cite[Chapter~10]{Wri97}. 
Essentially, MPC takes steps of the form \eqref{eq:lp.pdip}, but chooses the centering parameter $\sigma$ {\em adaptively} by first solving for an ``affine-scaling" step (a pure Newton step on the KKT conditions, obtained by setting $\sigma=0$ in \eqref{eq:lp.pdip}) and calculating how much reduction can be obtained in the complementarity gap $s^Tx$ along this direction before the nonnegativity constraints $x \ge 0$, $s \ge 0$ are violated. 
When the affine-scaling step yields a large reduction in $s^Tx$, a more aggressive (smaller) choice of $\sigma$ is made; otherwise $\sigma$ is chosen to be closer to $1$. 
%A more precise specification of the MPC procedure can be found in \Cref{sec: app.mpc}.

\paragraph{Relationship between SSV-SQP and PDIP} 
The linear systems to be solved for SSV-SQP and PDIP are similar in some respects.
To probe the relationship, we consider first the simple case in which $x = v\odot v$ (that is, the final constraint in \eqref{eq:lp.ssv}  is satisfied exactly), and $\sigma=0$ in \eqref{eq:lp.pdip.4}. 
In the notation of  \eqref{eq:lp.sqp} and \eqref{eq:lp.pdip}, we have  $V r_{sv} = r_{sx}$, $r_v=0$ and $V^2 = X$. 
Thus, the only difference between \eqref{eq: lp.sqp.lambda} and  \eqref{eq: lp.ip.lambda.update} in this case is the coefficient  $\frac{1}{2}$ on $r_x$ in \eqref{eq: lp.sqp.lambda}. 
This difference disappears when $x$ is feasible, that is, $r_x=0$.
Even in this ideal case, the {\em next} iterates of SSV-SQP and PDIP  differ in general. 
Even though $\Delta \lambda$ and $\Delta s$ are the same, the $\Delta x$ step for  PDIP \eqref{eq:lp.pdip}  is 
\begin{equation}\label{eq: lp.pdip.deltax}
    \mbox{PDIP:} \quad \Delta x = S^{-1} r_{sx} -S^{-1}X\Delta s.
\end{equation}
The $\Delta v$ and $\Delta x$ for SQP \eqref{eq:lp.sqp} are 
    \begin{subequations}\label{eq:lp.sqp.deltav&deltax} 
    \begin{align}
        \mbox{SSV-SQP:} \quad \Delta v & = S^{-1} r_{sv} - S^{-1}V\Delta s,\label{eq: eq:lp.sqp.deltav}\\
        \mbox{SSV-SQP:} \quad \Delta x  & = 2 V\Delta v =  2S^{-1}r_{sx} - 2 S^{-1}X \Delta s.\label{eq: eq:lp.sqp.deltax}
    \end{align}
    \end{subequations}
That is, the $\Delta x$ component of the step for SSV-SQP is twice as long as for PDIP. 
Moreover, we no longer have $r_v =0$ for SSV-SQP at the next iteration, since at the new iterates $x^+ := x + \alpha \Delta x$ and $v^+ = v + \alpha \Delta v$, we have
\begin{equation}\begin{aligned}
\mbox{SSV-SQP:} \quad 
v^+ \odot v^+ %& =  (v+\alpha \Delta v) \odot (v+\alpha \Delta v) \\
& = 
x + 2\alpha v\odot \Delta v + \alpha ^2 \Delta v \odot \Delta v  \\
& =
x + \alpha \Delta x + \frac{1}{4} \alpha^2 X^{-1} \Delta x \odot \Delta x \\
& =
x^+ + \frac{1}{4} \alpha^2 X^{-1} \Delta x \odot \Delta x 
\end{aligned}\end{equation}
where the second step in this derivation follows from \eqref{eq: eq:lp.sqp.deltax}.
Hence for the new $r_v$ residual, we have
\[
r_v^+ = v^+\odot v^+ - x^+ = \frac{1}{4} \alpha^2 X^{-1} \Delta x \odot \Delta x,
\]
which is in general not $0$. The difference arises from  nonlinearity of $x = v\odot v$.

\paragraph{Stepsize choice} For the stepsize choice in PDIP,
we choose the stepsize to ensure the primal iterate $x$ and dual iterate $s$ are always nonnegative.  We also use different stepsize for primal and dual variables for better practical performance. Precisely, we set the maximum step for primal and dual as 
\begin{equation}
\begin{aligned}\label{eq: pdipstepmaxlength}
     \mbox{PDIP:} \quad  \alpha_P^{\max} & = \min\left( 1, \max\{\alpha\mid x+\alpha \Delta x \geq 0\}\right), \\
     \mbox{PDIP:} \quad  \alpha_D^{\max} & = \min\left( 1,\max\{\alpha\mid s+\alpha \Delta s \geq 0\}\right),
\end{aligned}
\end{equation}
then choose the actual steplengths to be
\begin{equation} \label{eq: actual_pdipstepmaxlength}
    \mbox{PDIP:} \quad \alpha_P = \tau  \alpha_P^{\max},\quad \text{and}\quad 
    \alpha_D = \tau  \alpha_D^{\max},
\end{equation}
for some parameter $\tau\in [0,1)$. 
Typical values of $\tau$ are $.99$ and $.995$. 
(Values closer to $1$ are more aggressive.)
The updated $x$ iterate is then
%\begin{equation}
%\begin{aligned}\label{eq: pdipUpdate}
$x^+ =  x + \alpha_P \Delta x$.
We make similar updates to $\lambda$ and $s$, using stepsize $\alpha_D$ for both.

For the stepsize in SSV-SQP, we again use two different stepsizes for primal variables $(x,v)$ and dual variables $(\lambda,s)$. We set the maximum step for primal and dual as 
\begin{equation}
\begin{aligned}\label{eq: sqp_stepmaxlength}
    \mbox{SSV-SQP:} \quad \alpha_P^{\max} & = \min\left( 1, \max\{\alpha\mid v+\alpha \Delta v \geq 0\}\right), \\
    \mbox{SSV-SQP:} \quad  \alpha_D^{\max} & = \min\left( 1,\max\{\alpha\mid s+\alpha \Delta s \geq 0\}\right).
\end{aligned}
\end{equation}
% (By contrast with \eqref{eq: pdipstepmaxlength}, 
The actual step lengths are set to
\begin{equation} \label{eq: actual_sqpstepmaxlength}
    \mbox{SSV-SQP:} \quad \alpha_P = \tau \alpha_P^{\max},\quad \text{and}\quad 
    \alpha_D = \tau \alpha_D^{\max},
\end{equation}
where $\tau \in (0,1]$ is a fixed parameter, as for PDIP.
The primal variables $x$ and $v$ are updated using $\alpha_P$ while $\lambda$ and $s$ are updated with $\alpha_D$.
\rev{This choice of steplength maintains positiveness of elements of $v$, which ensures that our derivation in \eqref{eq:lp.sqp} and \eqref{eq: lp.sqp.lambda} results in a well-posed linear system at each iteration.}

We tried a version of SSV-SQP in which the value of $x$ was reset to $v \odot v$ as a final step within each iteration, but it was worse in practice than the more elementary strategy outlined here.

\paragraph{Extension to Nonlinear Programming}
Effective and practical primal-dual interior-point methods have been developed for nonlinearly constrained optimization; see in particular IPOPT \cite{WacB06}.
These are natural extensions of the PDIP and MPC techniques outlined in this section, but with many modifications to handle the additional issues caused by nonlinearity and nonconvexity.
We believe that the SSV-SQP approach described here could likewise be extended to nonlinear programming. 
The many techniques required to make SQP practical for {\em equality constrained} problems would be needed, with additional techniques motivated by our experience with linear programming above, for example, maintaining positivity of the squared variables.
One possible theoretical advantage is that if the SQP algorithm can be steered toward points that satisfy second-order conditions for the squared-variable (equality constrained) formulation, then our theory of \Cref{sec:ssv.theory} ensures convergence to second-order points of the original problem.
This contrasts with the first-order guarantees available for interior-point methods (and most other methods for nonlinear programming).
The practical implications of this theoretical advantage are however minimal.

\section{Computational Experiments} 
\label{sec:computation}
% \lnote{The following paragraph is new.}

In this section, we test squared-variable formulations on four classes of problems: convex quadratic programming with bounds in \Cref{sec: cqpb}, linear programming in \Cref{sec:lp_computation}, \rev{nonnegative matrix factorization in \Cref{sec: NMF},} and constrained least squares in \Cref{sec: cls}. 
All experiments are performed using implementations in Matlab \cite{MATLAB}  on a 2020 Macbook Pro with an Apple M1 chip and 8GB of memory.

\subsection{Convex Quadratic Programming with Bounds}
\label{sec: cqpb}
We test the DSS approach on randomly generated convex quadratic programs with nonnegativity constraints. 
These problems have the form
\begin{equation} \label{eq:exp.1a}
    \min_{x \ge 0} \, f(x) := \frac12 x^TQx + b^Tx,
\end{equation}
where $Q$ is symmetric positive definite with eigenvalues log-uniformly distributed in the range $[\kappa^{-1},1]$ for a chosen value of $\kappa>1$, and eigenvectors oriented randomly. 
The vector $b$ is chosen such that the unconstrained minimizer of the objective is at a point $x_{\text{ref}}$, whose components are drawn i.i.d from the unit normal distribution $N(0,1)$.
We use the direct substitution $x = v \odot v$ to obtain the DSS formulation.
\begin{equation}
    \label{eq:exp.1b}
    \min_v \, F(v) := \frac12 (v \odot v)^T Q (v \odot v) + b^T (v \odot v).
\end{equation}
The \rev{common starting point} for the algorithms for \eqref{eq:exp.1a} is a vector with elements $\max(\phi,0)+1$, where $\phi$ follows the standard gaussian distribution.
Similarly, for \eqref{eq:exp.1b}, the initial $v$ is the vector whose elements are $\sqrt{\max(\phi,0)+1}$, where $\phi$ follows 
the standard gaussian distribution.\footnote{Note that we cannot
start from a point with $v_i=0$ for any $i$, since this component of the gradient will always be zero so a gradient method will never move away.}

We compare a gradient projection algorithm for \eqref{eq:exp.1a} with a gradient descent algorithm for \eqref{eq:exp.1b}.
The algorithms are as follows.
\begin{itemize}
    \item[] \textbf{PG.} Gradient projection for \eqref{eq:exp.1a}, where each step has the form $x^+ = \max(x-\alpha g,0)$, where $g = Qx+b$  is the gradient and $\alpha$ is a steplength chosen according to a backtracking procedure with backtracking factor $0.5$. The initial value of $\alpha$ at the next iteration is $1.5$ times the successful value of $\alpha$ at the current iteration. 
%    \item[] \textbf{PG: Scaled.}  Two-metric gradient projection for \eqref{eq:exp.1a}. The approach is the same as for PG, but the components of the gradient that correspond to free variables (components of $x$ that are positive) are multiplied by the inverse of the corresponding diagonal element of $Q$.
%     \item[] \textbf{GD.} Gradient descent for \eqref{eq:exp.1b}. The algorithm takes a normal backtracking gradient descent step, with parameters for manipulating $\alpha$ chosen as in PG. 
    \item[] \textbf{GD: Scaled.} Scaled gradient descent for \eqref{eq:exp.1b}. When $\| \nabla F(v) \| > .1$, the algorithm takes a normal backtracking gradient descent step, with parameters for manipulating $\alpha$ chosen as in PG. When $
    \| \nabla F(v) \| \le .1$, a unit step is tried in the direction $D^{-1} \nabla F(v)$, where $D = \diag (\nabla^2 F(v)) + \lambda I$, with $\lambda$ being the smallest positive value such that all elements of the diagonal matrix $D$ are at least $10^{-5}$. If this step yields a decrease in $F$, it is accepted; otherwise, a normal gradient descent step with backtracking is used.
\end{itemize}
Note that the cost of diagonal scaling is relatively small; it would be dominated in general by the cost of calculating the gradient. 
\rev{We also tested a version of PG for \eqref{eq:exp.1a} with similar diagonal Hessian scaling to GD: Scaled, but its results were similar to but slightly worse than PG, so we omit them. Likewise, we omit results for the non-scaled version of GD, whose performance was much worse than the scaled version of this algorithm.}

In all cases the algorithms are terminated when 
\[
\|x-\max(x-(Qx+b),0)\| \le \text{tol},
\]
for a small positive value of $\text{tol}$.
% , or the number of iteration reaches \rev{$2000$.} 
(For the formulation \eqref{eq:exp.1b}, we set $x = v \odot v$ for purposes of this test.)
\rev{The quantity $x-\max(x-(Qx+b),0)$ is the difference between consecutive iterates of PG when stepsize is $1$, and is also known as the proximal gradient mapping. 
If this condition is satisfied with $\text{tol}=0$ then $x$ is optimal.}

Results are shown in \Cref{fig: two_alg_QP}. We report on two values for condition number $\kappa$ (10 and 100) and two values of convergence tolerance $\text{tol}$ ($10^{-4}$ and $10^{-6}$), for dimension $n$ taking on five values between 100 and 2000. 
\rev{The plots show the number of iterations of each method, averaged over 25 random trials  in each case, together with the standard deviation (the shaded area) in iteration count over the trials.}
Importantly, the DSS formulation, despite its nonconvexity, always yielded a global solution  of the original problem.

\iffalse 
\begin{figure}[h]
    \centering
\includegraphics[width=0.8\textwidth]{}
    \caption{Iterations of projected gradient and scaled projected gradient applied to \eqref{eq:exp.1a}, compared with gradient descent and  diagonally-scaled gradient descent applied to \eqref{eq:exp.1b}, for several  values of dimension $n$, two values each of condition number $\kappa(Q)$, and convergence tolerance $\text{tol}$. Each line shows means (with standard deviation in shade) over 25 random trials. \label{fig: four_alg_QP}
    }
\end{figure}
\fi 

\begin{figure}
    \centering
  \includegraphics[width=0.8\textwidth]{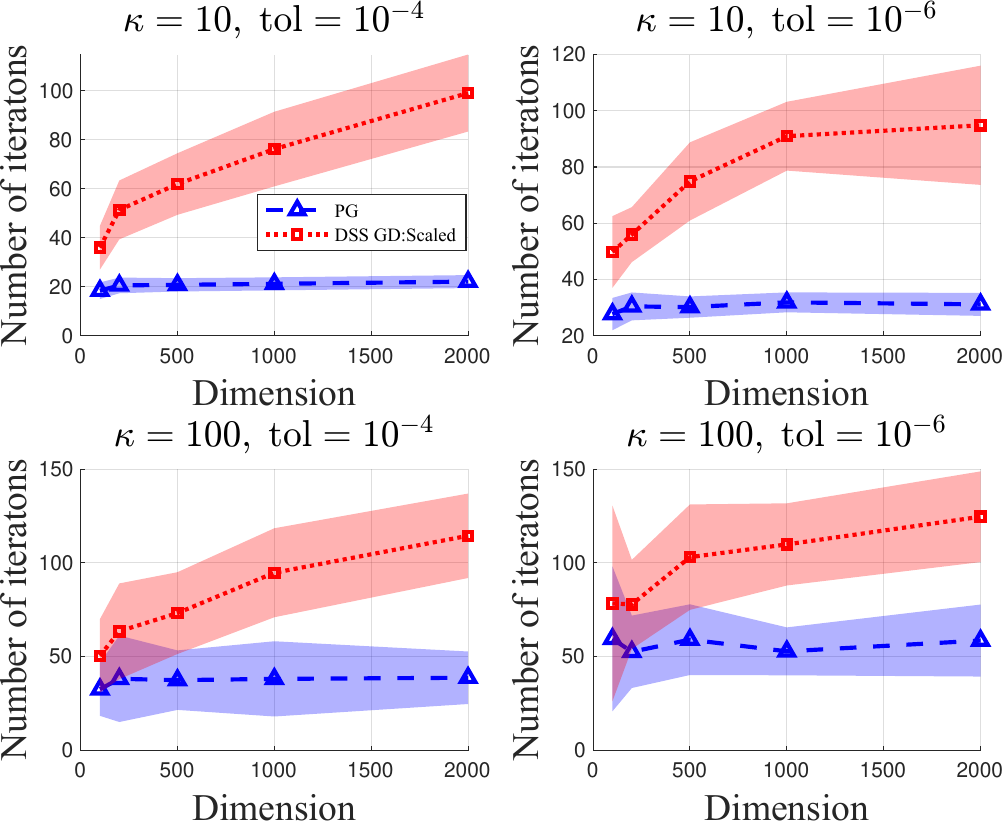}
    \caption{Iterations of gradient projection (PG) and scaled gradient descent (DSS GD: Scaled) applied to \eqref{eq:exp.1a}, compared with gradient descent and  diagonally-scaled gradient descent applied to \eqref{eq:exp.1b}, for several  values of dimension $n$, two values each of condition number $\kappa(Q)$, and convergence tolerance $\text{tol}$. Each line shows means (with plus and minus standard deviations in shade) over 25 random trials. \label{fig: two_alg_QP}
    }
\end{figure}
We note that for higher dimension and larger condition number, 
the PG approach is still faster than GD: Scaled, but by a (to us) surprisingly small factor, 
% \rev{(due to the poor performance of GD, which is almost always worse than GD: Scaled by a factor of at least 10),} 
ranging from $1.2$ to about $3$.
This factor is lower (that is, DSS is more competitive) for smaller problems, larger condition number, and more accurate solutions.

\subsection{Linear Programming}
\label{sec:lp_computation}
We consider linear programs \eqref{eq:lp} with upper bounds on some variables:
\begin{equation}\label{eq:lpup} \tag{LP-u}
\min_x \, c^T x \;\; \mbox{s.t.} \;\; Ax=b, \;\; 0 \le x_{\cI} \le u, \;\; 0 \le x_{\cJ}.
\end{equation}
Here $\cI$ is a subset of $[n]$ and $\cJ$ is its complement. The vector $u\in \R^{|\cI|}$ is the vector of upper bounds. Upper bounds are common in real applications (see \Cref{sec:netlib}) 
%(see discussion in \Cref{sec:netlib}) 
and treating them explicitly can lead to more efficient implementations. 
%(see technical report \cite[Appendix A]{ding2023squared} for details). 
%Many of the Netlib problems discussed in \Cref{sec:netlib} are solved more efficiently when upper bounds are allowed in the formulation.

By introducing variables $w$ for the upper bound constraints, we consider a form equivalent to \eqref{eq:lpup}:
\begin{equation}\label{eq:lpupw} \tag{LP-uw}
\min_{x,w} \, c^T x \;\; \mbox{s.t.} \;\; Ax=b, \;\; x_{\cI} +w= u, \;\; (x,w) \ge 0,
\end{equation}
which is in standard form for a linear program (linear equality constraints, with all variables nonnegative). 
In the SSV form of \eqref{eq:lpupw}, the nonnegativity requirements on $x$ and $w$ are replaced by equality constraints $x = v \odot v$ and $w = y \odot y$.
We apply the  MPC and SSV-SQP approaches introduced in \Cref{sec:lpssv}. 
Our solvers exploit the structure of the linear constraints in \eqref{eq:lpupw} that arise from the presence of upper bounds. 
Details are given in Appendix \ref{app: lpup}. 
%\rev{Details of formulation of the linear systems associated with this formulation are given in Appendix B of the technical report accompanying this paper \cite{ding2023squared}.}

\paragraph{Residuals and stopping criterion} To define the primal-dual residuals, we denote the dual variables for the constraints $x\geq 0$ and $w\geq 0$ as $s \in \real^{n}$ and $t\in \real^{|\cI|}$ respectively, and $\lambda$ is the dual variable for the linear constraint $Ax = b$. 
Define the following residuals:
\begin{subequations} \label{eq:lpupw.residual}
\begin{align}
\label{eq:lpupw.residual.1a}
     r_{c_\cI} & := -\left(c_\cI -s_\cI +t -A_\cI ^\top \lambda\right) , 
     %\\
    %\label{eq:lpupw.residual.1b}
    & r_{c_\cJ} & := -\left(c_\cJ -s_\cJ -A_\cJ ^\top \lambda\right) \\
    \label{eq:lpupw.residual.1c}
     r_x & := -\left(b-Ax\right), 
     %\\
    %\label{eq:lpupw.residual.1d}
     &r_u  &:= -\left(u -x_\cI -w\right), \\
    \label{eq:lpupw.residual.1e}
    r_{xs} &:=  x\odot s,
    %\\ 
    % \label{eq:lpupw.residual.1f}
   &r_{rw} &:=  t\odot w.
 \end{align}
\end{subequations}
The residual used to define the stopping criteria is defined as
\begin{equation}\label{eq: lp.res}
    \text{res} :\,= \frac{\norm{(r_{c_\cI}, r_{c_\cJ},r_x, r_u, r_{xs}, r_{rw},x_-,w_-)}}{1+\max\{\norm{b},\norm{c}\}}.
\end{equation}
Here $x_-,w_-$ are the negative parts of $x$ and $w$, respectively.
Note that we did not consider the nonnegativity of the dual variables $s$ and $t$ because both MPC and SSV-SQP maintain nonnegativity of these variables. 
We terminate the algorithm when $\text{res}\leq \epsilon$ for some threshold $\epsilon>0$, or when it reaches $500$ iterations, or $750$ seconds of wall-clock time, whichever comes first.

\paragraph{Parameter choice and initialization} 
For the step scaling parameter $\tau$, we choose $\tau =\{0.15,0.5,0.75,0.9\}$ for SSV-SQP and $\tau = \{0.9, 0.95,0.995\}$ for MPC. For MPC, to initialize $(x,w,\lambda,s,t)$, we define
$M := \max\{\infnorm{A},\infnorm{b},\infnorm{c}\}$ and set
$x_0 = s_0 = 100 M \textbf{1}_n $ (where $\textbf{1}_n$ is the vector of length $n$ whose elements are all $1$), $w_0= t_0 = 100 M \textbf{1}_{|\cI|}$, and $\lambda = 0$. For SSV-SQP, we initialize $(x,w,\lambda,s,t)$ in the same way as MPC.
The initial value of the squared variable is taken to be the square root of the corresponding original variable, calculated entrywise.

In the following sections, we  report only the number of iterations for MPC and SSV-SQP, not the runtime. 
The actual runtime for the algorithms to reach a certain accuracy is usually more favorable to SSV-SQP, since it only needs to solve one linear system per iteration, while our naive implementation of MPC factors the coefficient matrix twice at each iteration.
Our codes for MPC and SSV-SQP are identical to the extent possible; \rev{both are based on  PCx \cite{czyzyk1999pcx}.}

% \sw{We used a simple Matlab code for MPC. We modified it for SSV-SQP. Uses standard Matlab linear algebra routines. Not a production code so lacks all the bells and whistles to be robust, but on many problems it tracks the behavior of a real code.}

% \sw{put more details about parameter choices and convergence criteria and tolerance, value of steplength $\tau$. Also, the fact that we take different step sizes in primal and dual in both PDIP and SSV.}

\subsubsection{Random Linear Programs}
\label{sec: RLP}
We compare the performance of MPC and SSV-SQP on random instances of \eqref{eq:lpup}. 
We choose $n \in \{500,1000,2500\}$, while  $m = 0.1n$, $0.25n$, and $0.5n$. 
The number of upper bounds $| \cI|$ is set to be $\lfloor 0.05m \rfloor$.
We generate the upper-bounded component indices $\cI$ by randomly drawing from  the set $\{1,2,\dotsc,n\}$ without replacement, and choose the upper bounds on these components randomly from a uniform distribution on $[1,21]$.
For each combination of $(m,n)$, we randomly generate ten trials of $(A,b,c,\cI,u)$.  
The matrix $A$, vector $c$, and primal feasible point $\tilde{x}$  have entries drawn randomly and i.i.d. from a uniform distribution over $[0,1]$. We set  $b := A \tilde{x}$.

For each choice of parameters and dimensions, we report the average number of iterations over 10 trials to reach a residual smaller than $10^{-8}$ (together with the standard deviation over the 10 trials) for SSV-SQP with $\tau = .5,.75,.9$ and MPC with $\tau = .995$. \footnote{Since SSV-SQP with $\tau = .15$ is slow for larger $n$ and $m$, we do not report the results here. Among the values of $\tau$ tried for MPC,  $\tau =.995$ was generally superior.}

Results for random LPs are shown in Table \ref{tb: comparisonMPCvsSSV}.  The symbol $*$ indicates that the method fails to reach the desired accuracy in at least one of the ten trials.

\begin{comment}
\begin{table}[h]
\centering
\begin{tabular}{|c|c|c|c|c|}
\hline
$(n,m)$ & MPC & SSV ($\tau=.5$)  &  SSV ($\tau=.75$)  & SSV ($\tau=.9$)  \\  \hline
(500,50) & 15.5 (0.71) & 61 (2.5) & 38.5 (1.8) & $*$ \\  \hline
(500,125) & 15.8 (0.42) & 60.9 (2.4) & 39.2 (1.8) & $*$ \\  \hline
(500,250) & 15.9 (0.32) & 61.3 (3.4) & 39.4 (2.2) & 32 (2.2) \\  \hline
(1000,100) & 17.1 (0.74) & 62.6 (2.2) & 39.8 (1.5) & 32.5 (1.4) \\  \hline
(1000,250) & 17.4 (0.70) & 64.8 (3.1) & 42.2 (2.8) & 34.7 (2.5) \\  \hline
(1000,500) & 17.7 (0.48) & 64.3 (1.6) & 42.4 (0.97) & 36.1 (1.4) \\  \hline
(2500,250) & 18.2 (0.63) & 67.4 (3.4) & 43.7 (2.3) & 37 (2.6) \\  \hline
(2500,625) & 18.7 (0.68) & 70.1 (2.8) & 46.3 (2.2) & 38.1 (1.4) \\  \hline
(2500,1250) & 18.7 (0.48) & $*$ & 45.4 (2.1) & 39.4 (2.8) \\  \hline
\end{tabular}
\caption{Performance of  MPC and SSV-SQP for three different values of the parameter $\tau$. Each entry shows average number of iterations together with the standard deviation over ten trials. Missing entries correspond to cases in which convergence was not attained in one or more of the trials.} \label{tb: comparisonMPCvsSSV}
\end{table}
\end{comment}

\begin{table}[h]
\centering
\begin{tabular}{|c|c|c|c|c|}
\hline
$(n,m)$ & MPC & SSV ($\tau=.5$)  &  SSV ($\tau=.75$)  & SSV ($\tau=.9$)  \\  
\hline 
\hline
(500,50) & 15.5 (0.71) & 61.0 (2.5) & 38.5 (1.8) & $*$ \\ 
\hline
(500,125) & 15.8 (0.42) & 60.9 (2.4) & 39.2 (1.8) & $*$ \\ 
\hline
(500,250) & 15.9 (0.32) & 61.3 (3.4) & 39.4 (2.2) & 32.0 (2.2) \\ 
\hline
(1000,100) & 17.1 (0.74) & 62.6 (2.2) & 39.8 (1.5) & 32.5 (1.4) \\ 
\hline
(1000,250) & 17.4 (0.70) & 64.8 (3.1) & 42.3 (2.9) & 35.1 (2.8) \\ 
\hline
(1000,500) & 17.7 (0.48) & 64.3 (1.6) & 42.4 (1.0) & 36.1 (1.4) \\ 
\hline
(2500,250) & 18.2 (0.63) & 67.4 (3.4) & 43.7 (2.3) & 37.0 (2.6) \\ 
\hline
(2500,625) & 18.7 (0.68) & 70.1 (2.8) & 46.3 (2.2) & 38.1 (1.4) \\ 
\hline
(2500,1250) & 18.7 (0.48) & 68.3 (2.2) & 45.4 (2.1) & 39.4 (2.8) \\ 
\hline
\end{tabular}
\caption{Performance of  MPC and SSV-SQP for three different values of the parameter $\tau$. Each entry shows average number of iterations together with the standard deviation over ten trials. Missing entries correspond to cases in which convergence was not attained in one or more of the trials.} \label{tb: comparisonMPCvsSSV}
\end{table}

We note from this table that SSV is consistently slower than MPC, by a factor of 2-4 in the number of iterations.
However, this factor was surprisingly  modest to us, given the relatively high sophistication and widespread use of the MPC approach (despite our rather elementary implementation), and the lack of tuning of the SSV method.\footnote{\rev{Our surprise also comes from a naive SSV approach to \eqref{eq:lpupw}: using  \texttt{fmincon} in Matlab \cite{ma2021beyond} to solve the SSV form of \eqref{eq:lpupw}. The performance of SSV-SQP  is much better than that approach, e.g., SSV-SQP with $\tau = 0.75$ is almost always faster than  \texttt{fmincon} by a factor of $10$ or more.}
}For SSV, the less aggressive value $\tau=.5$ was more reliable in finding a solution, whereas the more aggressive value $\tau=.9$ required approximately half as many iterations, but was less reliable, with the iterates diverging for some trials.
(Similar phenomena were noted for non-random problems, as we report below.)
The standard deviation of iteration count across trials was slightly higher for SSV than for MPC.

It is possible that the SSV approach could be made significantly more efficient with more sophisticated techniques in SSV-SQP, including an adaptive, varying scheme for choosing the step scaling parameter $\tau$, and other modifications to the basic SQP approach.

\iffalse

 \begin{table}
 \begin{center}
 \begin{tabular}{|c|c|c|} 
 \hline 
 $(n,m)$ & SSV Iter  & MPC Iter  \\ 
 \hline 
 \hline 
 $(500,50)$ & 28.6 (1.76)  & 15.3(.737)\\ 
 $(500,125)$& 30.1 (2.2045) & 15.9 (.812)\\ 
  $(500,250)$ & 30 (1.44)  & 15.8 (.625)\\ 
 $(1000,100)$ &31.3 (1.93) & 16.4 (.707)\\ 
$(1000,250)$ &  38.5 (23.3) & 16.7 (.557)\\ 
 $(1000,500)$ &  34.4 (2.36)  & 16.9 (.400)\\ 
$(2500,250)$ &  35.3 (2.07)  & 17.7 (0.557)\\ 
 $(2500,625)$ &  37.7 (1.46)  & 17.6 (0.490)\\ 
$(2500,1250)$ &  38.8 (2.27) & 17.7 (0.458)\\ 
$(5000,500)$ &  39.0 (1.74) & 18.6 (0.707)\\ 
$(5000,1250)$ & 45.8 (21.8)  & 18.0800 (.277)\\ 
$(5000,2500)$ & 43.2 (2.38) & 17.7200 (0.458) \\
 \hline 
 \end{tabular}
 \end{center}
 \caption{Performance comparison of MPC for LP and SQP for SSV for random LP. Entries are averaged over 25 trials. We choose the stepsize so that $v$ and $s$ are always positive. The numbers in the parentheses for the right three columns are the standard deviations. \label{tb: comparisonMPCvsSSV}}
 \end{table}

 \fi

   \subsubsection{Netlib Test Set}
   \label{sec:netlib}
We test the performance of SSV-SQP against MPC on  the celebrated Netlib test set \cite{gay1985electronic}.
which played an important role in the development of interior-point codes for linear programs in the period 1988-1998.

Since the problem data of Netlib usually have some redundancies and degeneracies, which hurts code performance and makes comparisons unreliable, we use the presolver from PCx \cite{czyzyk1999pcx} on each problem.
Presolvers typically reduce the problem sizes, eliminate redundancies of many types, and improve the conditioning. This presolver outputs problems of the upper-bound form \eqref{eq:lpup}.

We report the number of instances solved to several accuracies $\epsilon$ for SSV-SQP and MPC in Table~\ref{table: SSV-MPC-netlib-suc-count}.\footnote{We report the results for MPC with $\tau =.90$ as it solves most of the problems among the choices of $\tau$ and only requires $\le 5$ iterations more than other choices of $\tau$. }. 
We also report the average number of iterations for the solved instances in Table~\ref{table: SSV-MPC-netlib-suc-mean}. 
A detailed report on the performance of SSV-SQP and MPC  for each instance in Netlib with two of these accuracy levels --- $10^{-2}$ and $10^{-5}$ --- can be found in 
%the technical report 
%\cite[Appendix C]{ding2023squared}.
\Cref{sec: netlib_MPCvsSSV}.

\begin{table}[h]
\centering 
\begin{tabular}{|c||cccccccc|}
\hline 
 $\epsilon$ & $10^{-1}$ & $10^{-2}$ & $10^{-3}$ & $10^{-4}$ &  $10^{-5}$ & $10^{-6}$ & $10^{-7}$ & $10^{-8}$ \\ 
\hline \hline 
$\tau = 0.15$ & 88 & 88 & 87 & 85 & 83 & 73 & 50 & 22\\ 
\hline 
$\tau = 0.5$ & 88 & 86 & 85 & 83 & 71 & 49 & 26 & 12\\ 
\hline 
$\tau = 0.75$ & 88 & 86 & 81 & 71 & 54 & 28 & 14 & 10\\ 
\hline 
$\tau = 0.9$ & 86 & 82 & 72 & 51 & 31 & 18 & 8 & 2\\ 
\hline \hline
MPC & 88 & 86 & 82 & 81 & 78 & 75 & 72 & 68 \\
\hline 
\end{tabular}
\caption{The number of cases solved to accuracy $\epsilon$ among $94$ instances for Netlib problems: SSV-SQP with $\tau = 0.15$, $0.5$, $0.75$, $0.9$;  and MPC with $\tau = 0.9$ (last row).}
\label{table: SSV-MPC-netlib-suc-count}
\end{table}

\begin{table}[h]
\centering 
\begin{tabular}{|c||cccccccc|}
\hline 
$\epsilon$ & $10^{-1}$ & $10^{-2}$ & $10^{-3}$ & $10^{-4}$ &  $10^{-5}$ & $10^{-6}$ & $10^{-7}$ & $10^{-8}$ \\ 
\hline 
\hline
$\tau = 0.15$ & 177.2 & 212.2 & 242.0 & 274.2 & 288.3 & 315.8 & 328.4 & 332.3 \\
\hline
$\tau = 0.5$ & 58.3 & 67.3 & 79.5 & 88.5 & 96.7 & 94.4 & 86.8 & 109.6 \\
\hline
$\tau = 0.75$ & 41.5 & 52.8 & 59.6 & 72.4 & 73.4 & 53.1 & 55.4 & 58.1 \\
\hline
$\tau = 0.9$ & 42.8 & 55.1 & 60.2 & 56.4 & 47.3 & 45.8 & 38.6 & 27.0 \\
\hline \hline
MPC  & 18.8 & 21.3 & 23.2 & 25.3 & 27.3 & 28.5 & 30.1 & 30.8 \\
\hline
% $\tau = 0.15$ & 177.19 & 212.22 & 241.97 & 274.18 & 288.34 & 315.78 & 328.38 & 332.32 \\
% \hline
% $\tau = 0.5$ & 58.27 & 67.28 & 79.49 & 88.49 & 96.73 & 94.37 & 86.85 & 109.58 \\
% \hline
% $\tau = 0.75$ & 41.52 & 52.80 & 59.65 & 72.44 & 73.35 & 53.11 & 55.43 & 58.10 \\
% \hline
% $\tau = 0.9$ & 42.80 & 55.06 & 60.21 & 56.43 & 47.29 & 45.78 & 38.63 & 27 \\
% \hline \hline
% MPC  & 18.85 & 21.27 & 23.23 & 25.33 & 27.33 & 28.48 & 30.13 & 30.78 \\
% \hline
\end{tabular}
\caption{The average number of iterations among the solved instance with accuracy $\epsilon$ for Netlib problems: SSV-SQP with $\tau = 0.15$, $0.5$, $0.75$, $0.9$; and MPC with $\tau = 0.9$.}
\label{table: SSV-MPC-netlib-suc-mean}
\end{table}

Table~\ref{table: SSV-MPC-netlib-suc-count} shows that as the step-scaling parameter $\tau$ decreases, SSV-SQP is able to solve more problems but requires more iterations. 
SSV-SQP with $\tau =.15,.5,.75$ is even able to solve more problems than MPC for less stringent  tolerances $(10^{-1}$-$10^{-2})$. 
SSV-SQP with $\tau = .5$ is also able to solve at least as many instances as MPC for accuracies $10^{-1}$-$10^{-4}$.
Table~\ref{table: SSV-MPC-netlib-suc-mean} shows that for $\tau=0.5$, the number of iterations of SSV-SQP is within a factor of $4$ of that for MPC. 
For $\tau=0.9$, among solved instances, SSV-SQP is even more competitive with MPC, but for smaller values of $\epsilon$, many problem instances are not solved by SSV-SQP.
The number of iterations required by SSV-SQP for the most conservative setting $\tau=.15$ is much worse, averaging a factor of about 10 greater than MPC.

%Details of the experience with Netlib problems for two values of $\epsilon$, $10^{-2}$ and $10^{-5}$, are shown in Table~\ref{tb: netlib_MPCvsSSV} in Appendix~\ref{sec: netlib_MPCvsSSV}.

% We note that there were problems that MPC and not SSV solved, and vice versa.

We note the low success rates for SSV-SQP with higher accuracies (smaller $\epsilon$). 
\rev{Overall, although the results for SSV-SQP are considerably worse overall  than for MPC, we see reasons to investigate the approach further. First, the difference in performance and robustness of the two approaches is least for low-accuracy solutions, and low accuracy suffices for some applications. Second,
we believe that tuning we could improve the performance of SSV-SQP, for which (by contrast with MPC) there is no ``folklore" concerning heuristics that improve efficiency.}
\footnote{In the experiments, we observed that SSV-SQP fails to converge when the complementarity measure $\|r_{xs}\|_1+\|r_{rw}\|_1$ is small while the infeasibility measure increases. 
Convergence tends to occur when these two measures decrease at similar rates.
Modifying SSV-SQP steplength strategy to ensure this property could be the key to improving the algorithm.}

\rev{
\subsection{Nonnegative matrix factorization (NMF)}
\label{sec: NMF}
We consider nonnegative matrix factorization (NMF), a nonconvex problem type that arises in a variety of applications (for example, recommendation systems \cite{luo2014efficient}). 
We consider the formulation
\begin{equation}\label{eq: nmfOG}
    \min_{X\in \mathbb{R}^{n\times r}, X\geq 0} f(X):=\fronorm{XX^\top - M}^2,
\end{equation}
where $X \ge 0$ indicates element-wise nonnegativity.
We test the DSS approach on randomly generated problems in which $M = UU^\top$ for some $U\in \mathbb{R}^{n\times r}$ and $U\ge 0$, where
each entry of $U$ is drawn uniformly from $[0,1]$. 
We use the  direct substitution $X = V\odot V$ for some $V\in \mathbb{R}^{n \times r}$ to obtain the DSS formulation:
\begin{equation}\label{eq: nmfDSS}
    \min_{V \in \mathbb{R}^{n\times r}} F(V):=\fronorm{(V\odot V)(V\odot V)^\top - M}^2
\end{equation}
Note that the optimal objective value in this problem is $0$, since there is a solution $V$ whose elements are the square roots of the corresponding element of $U$. 
We consider five methods for \eqref{eq: nmfDSS}: 
\textbf{PG} for \eqref{eq: nmfOG} and \textbf{GD} for \eqref{eq: nmfDSS}, both with backtracking line search, 
as described in Section \ref{sec: cqpb}.  For better numerical performance, the backtracking factor $\alpha$ is $0.35$ and the initial $\alpha$ at the next iteration is 2 times the successful value of $\alpha$ at the current iteration. In addition, we consider the following three methods:
\begin{itemize}
\item \textbf{PG-Polyak.} Gradient projection with the stepsize, $\frac{f(X)} {2\fronorm{\nabla f(X)}^2}$ for \eqref{eq: nmfOG}.
\item \textbf{GD-Polyak.} Gradient descent with the stepsize, $\frac{F(V)}{2\fronorm{\nabla F(V)}^2}$ for \eqref{eq: nmfDSS}.
\item \textbf{L-BFGS.} L-BFGS implemented by \texttt{fminunc} in Matlab for \eqref{eq: nmfDSS}.
\end{itemize}
The step size for \textbf{PG-Polyak} and \textbf{GD-Polyak} is half the Polyak stepsize. 
We use a more conservative stepsize because the problem is nonconvex. 
We declare convergence when $\text{acc} \le \epsilon$, where
\begin{equation}
    \text{acc} :=  \frac{\fronorm{XX^\top -M}^2}{\fronorm{M}^2},
\end{equation}
%
%
%%%%%
%%
%
%
and  $\epsilon = 10^{-2}$ or $10^{-4}$. 
(For the formulation \eqref{eq: nmfDSS}, we set $X = V\odot V$.) We choose dimension 
$n \in \{1000,2000,3000,4000,5000\}$ and the rank $r \in \{10,\,20\}$. 
For each combination of $r$ and $n$, we generate $20$ random versions of $U$ 
and record the number of iterations required by each of the two approaches to reach the required accuracy.  

The results are shown in \Cref{fig: nmf}. 
PG based on the original formulation \eqref{eq: nmfOG} is generally more effective for low accuracy $(\epsilon=10^{-2})$  than the methods based on \eqref{eq: nmfDSS}. 
For medium accuracy $(\epsilon=10^{-2})$, the methods based on \eqref{eq: nmfDSS} are generally better.
For example, GD-Polyak improves on PG and PG-Polyak by factors of $1.5-5$. 
The number of iterations of GD-Polyak, PG, and PG-Polyak ranges from $250-2500$, so the saving in absolute terms is significant. 
The method L-BFGS applied to \eqref{eq: nmfDSS}  is highly effective in this regime, improving on all other methods by factors of $3-15$. 
In additional experiments (not shown here) with $\epsilon=10^{-5}$, L-BFGS is even better. 

These results demonstrate the potential of the squared-variable approaches for constrained matrix optimization problems. They may be an alternative or complement to existing standard approaches.

\begin{figure}
    \centering
\includegraphics[width= 0.9\textwidth]{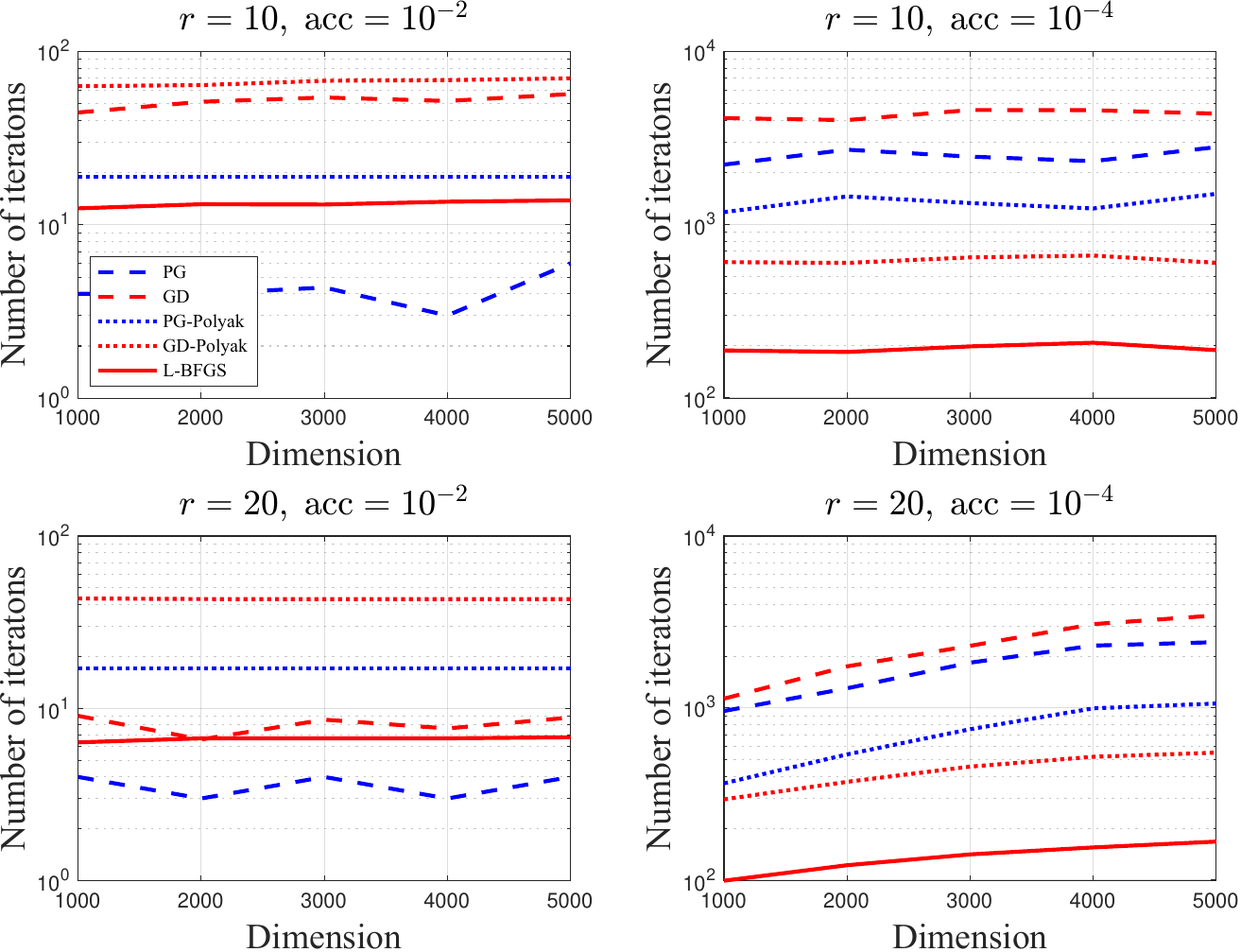}
    \caption{Number of iterations to reach the required accuracy for nonnegative matrix factorization. The dimension varies from $1000$ to $5000$, and $r = 10$ and $20$. The lines are averaged over $20$ random trials. Red lines are the algorithms based on \eqref{eq: nmfDSS} while blue lines are based on \eqref{eq: nmfOG}. Note the $y$-axis is on a log scale. 
    \label{fig: nmf}}
\end{figure}
}

\subsection{Constrained Least Squares}
\label{sec: cls}
In this subsection, we consider the constrained least square problem with a sparse solution:
\begin{equation}
    \label{eq:exp.2a}
    \min \, \|Ax-b \|_2^2 /2\quad \mbox{s.t.} \; \|x\|_\tau \le R_{\tau}^{\frac{1}{\tau}},
\end{equation}
where $A \in \R^{m \times n}$ with $m<n$ and $R_{\tau}>0$ is given. 
Here the pseudo-norm $\|\cdot\|_\tau$ is defined as  $\|x\|_\tau = (\sum_{i=1}^\dm |x_i|^\tau)^{\frac{1}{\tau}}$. 
We choose  $\tau \in (0,1]$ to obtain a sparse solution. 
For $\tau = 1$, the problem \eqref{eq:exp.2a} is the LASSO problem \cite{tibshirani1996regression}.

Substituting $x = v \odot v - w \odot w $ and using the fact that in order to represent $x_i$, only one of $v_i$ and $w_i$ needs to be nonzero, we see that \eqref{eq:exp.2a} is equivalent to
\begin{equation}
    \label{eq:exp.2b}
    \min \, \|A(v \odot v - w \odot w)-b \|_2^2 /2\quad \mbox{s.t.} \; \sum_{i=1}^n (v_i^2)^\tau + (w_i^2)^\tau  \le R_\tau.
\end{equation}
For $\tau =1$, the feasible region of \eqref{eq:exp.2b} is an $\ell_2$-norm ball.
Projection onto this set  can be performed in $\mathcal{O}(n)$ time. 
Using the analysis in \cite{li2021simplex} or extending our analysis here, one can show  that 2NP for \eqref{eq:exp.2b} are 1P for \eqref{eq:exp.2a} and therefore optimal for \eqref{eq:exp.2a}.

Projecting to the feasible set for  \eqref{eq:exp.2b} is difficult for general values of  $\tau$ in $[0,1)$. To deal with this issue, we consider $\tau = 1/L$ for even integer values of  $L$. 
Using the substitution $x = \underbrace{v\odot \dots \odot v}_{L\;\text{times}} -\underbrace{w\odot \dots \odot w}_{L\;\text{times}} 
= v^{\odot L} -w ^{\odot L}$, and the fact that in order to represent $x_i$, only one of $v_i$ and $w_i$ needs to be nonzero, we obtain an equivalent problem of \eqref{eq:exp.2a} (in terms of global solution):
\begin{equation}
    \label{eq:exp.2c}
    \min \, g_L(v,w):=  \|A(v^{\odot L} - w ^{\odot L})-b \|_2^2 /2\quad \mbox{s.t.} \; \sum_{i=1}^n |v_i| + |w_i|  \le R_{1/L}.
\end{equation}
Note that the objective, in this case, is still smooth and the constraint set is an $\ell_1$ norm ball, for which projection is efficient.

We now explain how $A$ and $b$ are generated and what algorithm we use to solve \eqref{eq:exp.2b} and \eqref{eq:exp.2c}. In our experiments, $A$ has i.i.d. standard Gaussian entries. We generate a sparse vector $\beta_0\in \real^n$ with given sparsity $s$ whose nonzero entries are drawn from the uniform distribution over $[-1,1]$. Setting a  noise level  $\sigma =1$, we define $b = A\beta_0 + e_{\sigma}$, where $e_{\sigma} \sim N(0,\sigma^2 I)$.
We choose $\tau = \left\{1, \frac{1}{2},\frac{1}{4},\frac{1}{6},\frac{1}{8} \right\}$ (corresponding to $L=1,2,4,6,8$), in order.
For each value of $\tau$, the parameter $R_\tau$ is chosen to be the value appropriate to the reference solution $\beta_0$, that is, $R_\tau = \sum_{i=1}^n |\beta_{0i}|^\tau$. 
We use projected gradient (PG) for solving \eqref{eq:exp.2b} and \eqref{eq:exp.2c} with Armijo line search  and terminate  the algorithm when
\begin{equation} \label{eq:fskhj7}
    \twonorm{(v,w) - \mathcal{P}\left[(v,w)-\nabla g_L(v,w)\right]}\leq 10^{-6}.
\end{equation} 
Here $\mathcal{P}$ is the projection operator of the $\ell_2$ norm ball with radius $R_{1}$ in $\real^{2n}$ for $\tau =1$, and $\mathcal{P}$ is the projection operator of the $\ell_1$ norm ball with radius $R_{1/L}$ in $\real^{2n}$  for other choices of $\tau$.
For each value of $\tau$, we let PG run for at most $T=200$ iterations, or when the test \eqref{eq:fskhj7} is satisfied, whichever comes first.
We start with a random initialization for $\tau=1$.
Then, \rev{for} each subsequent value of $\tau$,  we initialize at the final iterate of PG for the previous value of $\tau$.

In Figure~\ref{fig: RandomUniformTerm}, we present the numerical results of the relative objective value $g_L(v,w)/\twonorm{b}^2$ and  relative recovery error $\twonorm{v^{\odot L}-w^{\odot L}-\beta_0}/\twonorm{\beta_0}$ against the number of iterations, averaged over $500$ trials.
A marker ``$+$" appears on each trace in these figures corresponding to the average number of iterates required for termination, for each value of $\tau$.
After each of the $500$ trials terminates, the values of $g_L(v,w)/\twonorm{b}^2$ and  $\twonorm{v^{\odot L}-w^{\odot L}-\beta_0}/\twonorm{\beta_0}$ are taken to be their respective values at the iterate of termination.
In these tests, we set $n=1000$, $m=207$, $s=5$, and $\sigma=1$. .

Note that the use of pseudo-norms with $\tau<1$ significantly improves over the $\tau=1$ case in terms of closeness of the recovered  solution to the reference solution $\beta_0$. 
The final recovery error is much smaller for  $\tau = 1/2$ than for $\tau=1$, and slightly smaller again for $\tau=1/4$, but does not decrease significantly for smaller values of $\tau$. 
The warm-starting strategy leads to relatively few PG iterations being required for values of $\tau$ less than one, once the solution has been calculated for $\tau=1$.

\begin{figure}
    \centering
    	\begin{subfigure}[(a)]{.49\textwidth}
		\centering
	\includegraphics[width=1\linewidth, height= 0.25\textheight]{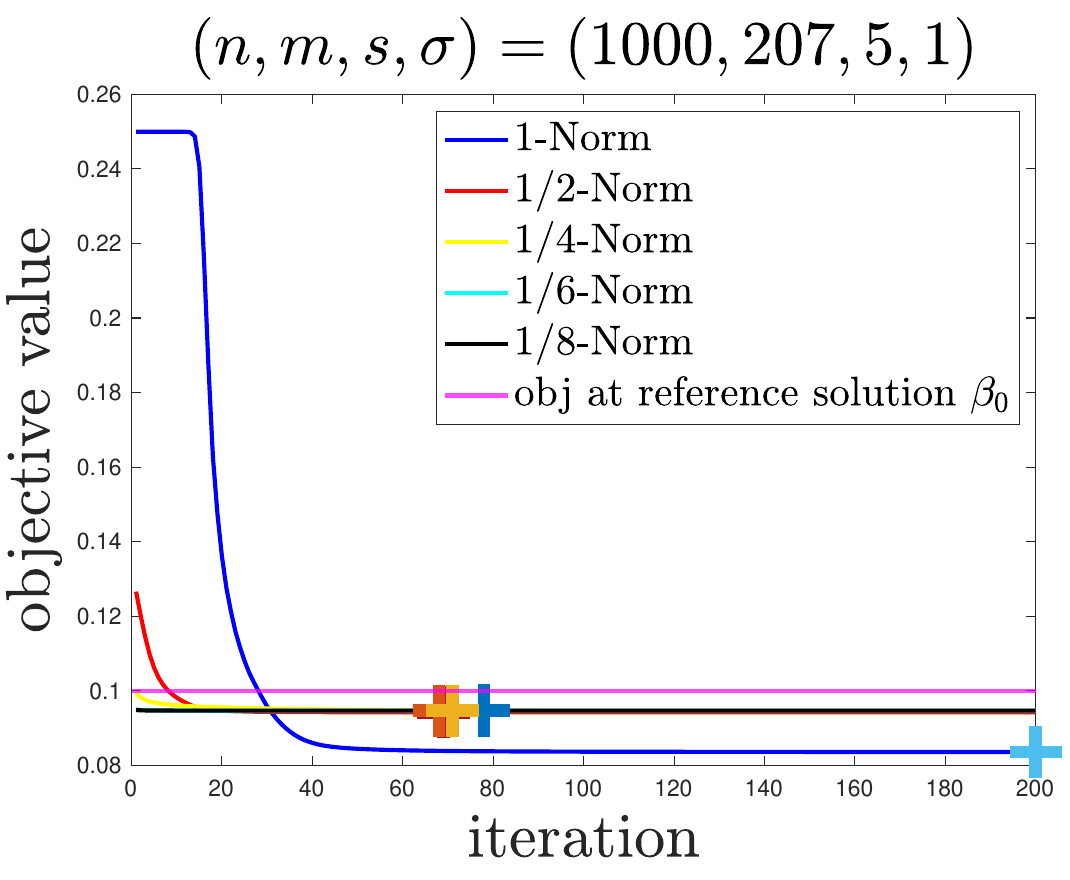}
		\caption{Objective value against the number of iterations}
		\label{fig: objective value RandomUniformTerm}
	\end{subfigure}
 \begin{subfigure}[(b)]{.49\textwidth}
		\centering
\includegraphics[width=1\linewidth, height= 0.25\textheight]{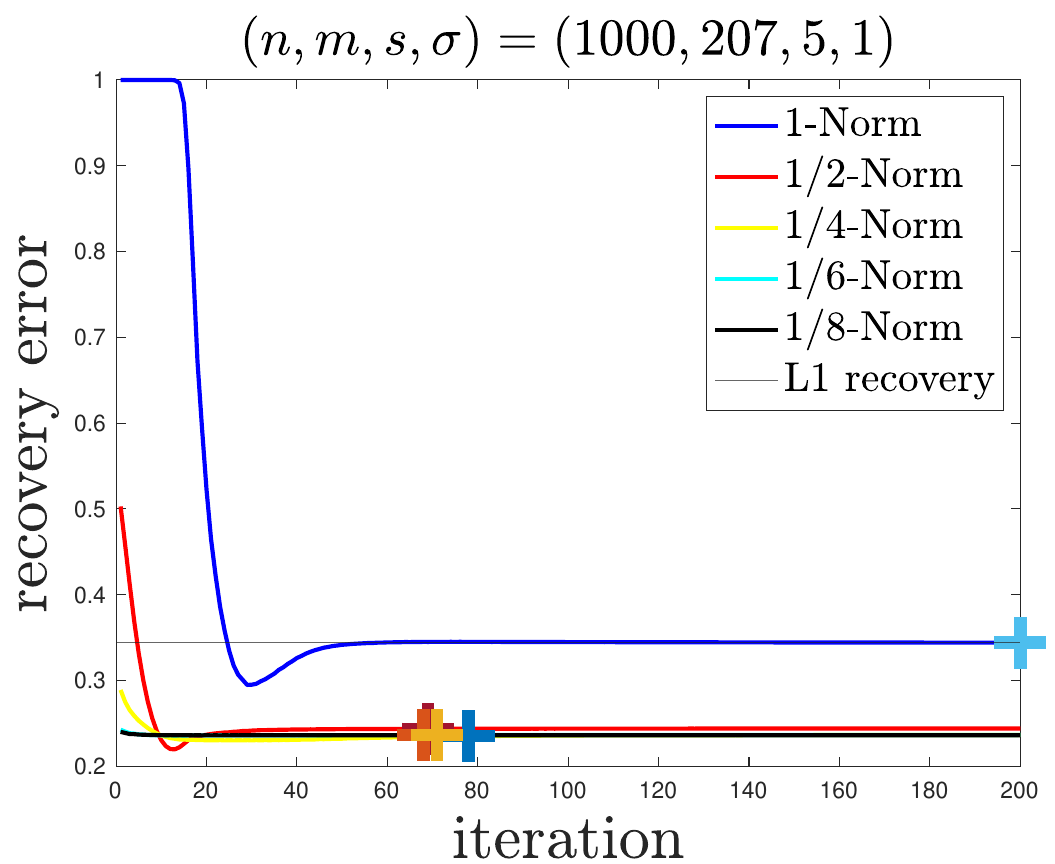}
		\caption{Recovery error against the number of iterations}
		\label{fig: recovery value RandomUniformTerm}
	\end{subfigure}
    \caption{Performance of pseudo-norm recovery using a power-variable formulation with warm starts. Average over $500$ trials. Each trial's reference solution $\beta_0$ has five nonzero components drawn from the uniform distribution over $[-1,1]$.  The last iteration for each value of $\tau$ is averaged over all trials, with the average marked as $+$ in the plot. 
    \label{fig: RandomUniformTerm}}
\end{figure}

\section{Conclusions}
We have considered both theoretical and practical aspects of squared-variables formulations for inequality-constrained optimization problems. 
Building on previous work, we find that some of the theoretical drawbacks  are less problematic than conventional wisdom would suggest.
Practically, although squared-variable approaches do not improve on the state of the art in quadratic and linear programming, they are more competitive than might be expected, \rev{and may become more competitive with further development, at least in some contexts.}  \rev{The results for nonnegative matrix factorization already show improvement over the state of the art.}
In the case of pseudo-norm constrained least squares (a problem of interest in sparse recovery) an extension of the squared-variable approach has good computational performance \rev{and better results than the standard approach based on the $\ell_1$ norm.}

\section*{Acknowledgments}
We thank Ivan Jaen Marquez for his help in resuscitating the PCx linear programming code for use in our experiments.

\bibliographystyle{siamplain}
\bibliography{reference}

% \newpage 

\appendix

\rev{
\section{Equivalence of 1Ps of \eqref{eq: L1} and \eqref{eq: L1bc}}
\label{sec: 1Ps}
\begin{lemma}\label{lem: 1PL1L1bc}
Consider \eqref{eq: L1} and \eqref{eq: L1bc} with $\lambda>0$. 
Then if $x$ is a 1P of \eqref{eq: L1}, then \eqref{eq: L1bc} admits a 1P $\bar{x} = (x^+,x^-)$, where $x^+$, $x^-$ are the positive and negative parts of $x$, respectively.
Conversely, if $\bar{x} =(x^+,x^{-}) $ is a 1P of \eqref{eq: L1bc}, then $x := x^+-x^-$ is a 1P of \eqref{eq: L1}.
\end{lemma}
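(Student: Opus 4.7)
The plan is to translate between the (nonsmooth) first-order necessary condition for \eqref{eq: L1}, namely $0 \in \nabla h(x) + \lambda \partial \|x\|_1$ (with $\partial |x_i| = \{\mathrm{sign}(x_i)\}$ when $x_i \ne 0$ and $\partial |x_i| = [-1,1]$ when $x_i = 0$), and the KKT conditions for the smooth bound-constrained problem \eqref{eq: L1bc}. The KKT conditions for \eqref{eq: L1bc} with multipliers $s^+, s^- \in \R^n$ read: $\nabla h(x^+ - x^-) + \lambda \mathbf{1}_n - s^+ = 0$, $-\nabla h(x^+ - x^-) + \lambda \mathbf{1}_n - s^- = 0$, together with $x^+, x^-, s^+, s^- \ge 0$ and $x^+_i s^+_i = x^-_i s^-_i = 0$ for all $i$.

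For the forward direction, given a 1P $x$ of \eqref{eq: L1} with subgradient certificate $g \in \partial \|x\|_1$ satisfying $\nabla h(x) + \lambda g = 0$, I would take $x^+ = \max(x,0)$ and $x^- = \max(-x,0)$ componentwise, so $x = x^+ - x^-$, and define the multipliers by $s^+ := \lambda(\mathbf{1}_n - g)$ and $s^- := \lambda(\mathbf{1}_n + g)$. Since $g_i \in [-1,1]$ and $\lambda > 0$, these are nonnegative. The two stationarity equations then follow by substituting $\nabla h(x) = -\lambda g$. For complementarity, one checks case by case: if $x_i > 0$ then $g_i = 1$ so $s^+_i = 0$ and also $x^-_i = 0$; the case $x_i < 0$ is symmetric; if $x_i = 0$ then both $x^+_i = x^-_i = 0$ and complementarity is trivial.

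For the reverse direction, given a 1P $(x^+, x^-)$ with multipliers $(s^+, s^-)$ of \eqref{eq: L1bc}, set $x := x^+ - x^-$. The key observation, and the only place where $\lambda > 0$ is essential, is to rule out the case $x^+_i > 0$ and $x^-_i > 0$ simultaneously: both complementarity relations would force $s^+_i = s^-_i = 0$, and summing the two stationarity equations would yield $2\lambda = 0$, a contradiction. Hence at each index $i$ at least one of $x^+_i, x^-_i$ vanishes, and $(x^+_i, x^-_i) = (\max(x_i,0), \max(-x_i,0))$. One then defines $g_i \in \partial |x_i|$ by setting $g_i = 1$ when $x_i > 0$, $g_i = -1$ when $x_i < 0$, and $g_i = -[\nabla h(x)]_i / \lambda$ when $x_i = 0$ (this lies in $[-1,1]$ precisely because nonnegativity of $s^+_i$ and $s^-_i$ forces $[\nabla h(x)]_i \in [-\lambda, \lambda]$). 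A direct substitution verifies $\nabla h(x) + \lambda g = 0$, so $x$ is a 1P of \eqref{eq: L1}. The main (mild) obstacle is just bookkeeping in the degenerate case $x_i = 0$; the only real use of a nontrivial hypothesis is $\lambda > 0$ to exclude the spurious ``double-positive'' situation in the reverse direction.
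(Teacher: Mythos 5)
Your proposal is correct and follows essentially the same route as the paper's proof: your multipliers $s^+,s^-$ are exactly the two blocks of $\nabla\bar h(\bar x)$ in the paper's argument, and your key step in the reverse direction (summing the two stationarity relations to get $s_i^++s_i^-=2\lambda>0$ and thereby excluding $x_i^+>0$, $x_i^->0$ simultaneously) is the same contradiction the paper derives from its gradient formula. The case analysis for complementarity and the recovery of a subgradient $g_i\in[-1,1]$ at degenerate indices also match the paper's proof.
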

\begin{proof}
    Conditions for $x$ to be  a 1P of \eqref{eq: L1} are 
    \begin{equation}\label{eq: 1PL1}
     [\nabla h(x)]_i = \begin{cases}
    -\lambda, & x_i >0 ,\\ 
    \alpha_i \in [-\lambda, \lambda],& x_i  = 0,\\
    \lambda,  & x_i <0.
    \end{cases}
    \end{equation}
    for some $\alpha_i\in [-\lambda, \lambda]$.
    A point $\bar{x}$ is a 1P of \eqref{eq: L1bc}  if it satisfies the 1N conditions in Definition \ref{def:bc}: (i) nonnegativity: $\bar{h}(\bar{x})\geq 0$ and $\bar{x}\geq 0$, and (ii) complementarity: $\nabla \bar{h}(\bar{x})_i  \bar{x}_i = 0$ for $i=1,\dots,2n$. Here, the gradient $\nabla \bar{h}(\bar{x})$ admits the formula: 
    \begin{equation} \label{eq: L1bc1P}
    [\nabla \bar{h}(\bar{x})]_j = 
    \begin{cases}
        [\nabla h(x)]_j + \lambda , & j \leq n\\
        -[\nabla h(x)]_{j-n} +\lambda, & j>n
    \end{cases}.
    \end{equation}

    If $x$ is a 1P of \eqref{eq: L1}, then $\bar{x}\ge 0$ holds by definition, and $\nabla \bar{h} (\bar{x})\geq 0$ due to \eqref{eq: 1PL1} and  \eqref{eq: L1bc1P}. 
For the complementarity condition, if $x_i=0$, then $x_{i}^+=x_{i}^-=0$, and clearly $\nabla h(\bar{x})_j \bar{x}_j =0$ for $j = i,n+i$. If $x_i>0$, then 
    $x_{i}^+ >0$, $x_{i}^- =0$, and 
    $[\nabla h(\bar{x})]_i = 0$ due to \eqref{eq: 1PL1} and 
     \eqref{eq: L1bc1P}. Hence $\nabla h(\bar{x})_j \bar{x}_j =0$ for $j=i,n-i$. 
     (The case $x_i<0$ is similar.)

     Conversely, let $\bar{x}=(x^+,x^-)$ be a 1P of \eqref{eq: L1bc}. 
     We will show $x= x^+-x^-$ is a 1P of \eqref{eq: L1}. 
     % First, we show that $x_{i}^+$ and $x_{i}^-$ cannot be both positive for some $i$. 
     If we have both $x_{i}^+ >0$ and $x_{i}^->0$, then $[\nabla\bar{h}(\bar{x})]_i =0$ and $[\nabla \bar{h}(\bar{x})]_{n+i}=0$ from the complementarity condition. Using \eqref{eq: L1bc1P}, we see this is impossible because $\lambda \not =0$. 
     Thus, if $x_{i}^+>0$, then $x_i = x_{i}^+$, and  $[\nabla h(x)]_j = -\lambda$ from the complementarity condition.
     Thus we see \eqref{eq: 1PL1} holds for $x_i$ in thsi case. 
     A similar argument shows \eqref{eq: 1PL1} for $x_{i}^->0$. 
     Finally, if both $x_{i}^+=x_{i}^-=0$, then \eqref{eq: L1bc1P} with $\nabla \bar{h}(\bar{x})\geq 0$ implies that $[\nabla h(x)]_i \in [-\lambda, \lambda]$, that is, \eqref{eq: 1PL1} holds. 
     Our proof is complete. 
\end{proof}
}

%\iffalse

\section{PDIP, MPC, and SSV for LP with upper bounds}\label{app: lpup}
We derive the formulas of one iteration for PDIP, MPC, and SSV-SQP for \eqref{eq:lpupw}.

\subsection{PDIP}\label{sec: app.pdip}
We introduce dual variables $\lambda \in \RR^m$ for $Ax=b$, $s \in \RR^n$ for $x\geq 0$, $t \in \RR^{|\cI|}$ for $w\geq 0$. The KKT condition for \eqref{eq:lpupw} is 
\begin{subequations} \label{eq:lpupw.kktip}
\begin{align}
\label{eq:lpupw.kktip.1a}
    A^T_{\cI} \lambda + s_\cI - t - c_\cI &= 0, \\
    \label{eq:lpupw.kktip.1b}
    A^T_{\cJ} \lambda + s_\cJ  - c_\cJ &= 0, \\
    \label{eq:lpupw.kktip.2a}
    Ax - b &=0, \\
    \label{eq:lpupw.kktip.2b}
    x_{\cI} +w &=u, \\
    \label{eq:lpupw.kktip.3}
    s \odot x &= 0,\quad w\odot t =0 \\
    \label{eq:lpupw.kktip.4}
    (x,s,t,w)& \ge 0.
\end{align}
\end{subequations}
Here $A_{\cI}$ and $A_{\cJ}$ are columns of $A$ in $I$ and $J$ respectively.

Each iteration of PDIP for \eqref{eq:lpupw.kktip} aims to solve:
\begin{subequations} \label{eq:lpupw.pdip}
\begin{align}
\label{eq:lpupw.pdip.1a}
    A^T_{\cI} \Delta \lambda + 
    \Delta s_\cI - \Delta r &= -r_{c_\cI} :\,= c_\cI -s_\cI +t -A_\cI \lambda , \\
    \label{eq:lpupw.pdip.1b}
    A^T_{\cJ} \Delta \lambda + 
    \Delta s_\cJ &= -r_{c_\cJ} :\,= c_\cJ -s_\cJ -A_\cJ \lambda \\
    \label{eq:lpupw.pdip.2a}
    A\Delta x  &=  -r_x:\, = b-Ax, \\
    \label{eq:lpupw.pdip.2b}
    \Delta x_{\cI} +\Delta w &=-r_u :\;= u -x_\cI -w, \\
    \label{eq:lpupw.pdip.3a}
    X\Delta s + S\Delta x  & = -r_{xs}:\;= - x\odot s + \sigma (s^Tx+w^Tt)/(n+|\cI|),\\ 
     \label{eq:lpupw.pdip.3b}
    W\Delta r + T\Delta W  & = -r_{rw}:\;= - t\odot w+ \sigma (s^Tx+w^Tt)/(n+|\cI|),
 \end{align}
\end{subequations}
where $X = \diag(x)$, $S = \diag(s)$, $W= \diag(w)$ and $T= \diag(t)$, and $\sigma \in [0,1]$ is some centering parameter. All components of $x$, $s$, $t$, and $w$ are maintained strictly positive throughout.

Let $r_{xwr} \in \RR^n$ such that 
\begin{equation} 
r_{xwr,\cI} = X_{\cI}^{-1} r_{xs,\cI} -W^{-1} r_{rw} +W^{-1}T r_u -r_{c_\cI},
\end{equation}
and 
\begin{equation} 
r_{xwr,\cJ} = X_{\cJ}^{-1} r_{xs,\cJ} -r_{c_\cI}.
\end{equation}
Here $X_{\cI} = \diag(x_{\cI})$ and $X_{\cJ} = \diag(x_{\cI}).$
Let $D\in \RR^{n\times n}$ be a diagonal matrix such that 
\begin{equation}
    D_{\cI,\cI} = X_{\cI}^{-1}S_{\cI} +W^{-1}T,\quad \text{and}\quad  D_{\cJ,\cJ} = X_{\cJ}^{-1} S_{\cJ}. 
\end{equation}
Here $D_{\cI,\cI}$ is the block corresponding to the index set $\cI$ and $D_{\cJ,\cJ}$ is the block corresponding to the index set $\cJ$. 

After a series of eliminationn in \eqref{eq:lpupw.pdip}, we obtain $\Delta x$ and $\Delta \lambda$ by solving 
\begin{equation}\label{eq:lpupw.pdip.linearsolve1}
    \begin{bmatrix}
        -D &  A^\top \\
        A &  0 
    \end{bmatrix}
    \begin{bmatrix}
        \Delta x \\ 
        \Delta \lambda 
    \end{bmatrix} = 
    \begin{bmatrix}
        r_{vwr} \\ 
        -r_x
    \end{bmatrix}.
\end{equation}
One can then easily solve for $\Delta w$, $\Delta t$, and $\Delta s$. 
Reducing further, we obtain the system
\begin{equation}\label{eq:lpupw.pdip.linearsolve2}
    AD^{-1}A^\top \Delta \lambda = AD^{-1}r_{vwr} -r_x,
\end{equation}
which is the form often considered in production interior-point software.
We use \eqref{eq:lpupw.pdip.linearsolve1} in the experiments for the Netlib data set because this system yields more accurate solutions in a finite-precision setting than  \eqref{eq:lpupw.pdip.linearsolve2} \cite{wright1997stability, wright1999modified}.
We take steps of the form 
    $(x,w) +\alpha_P (\Delta x, \Delta w) $ and 
    $(\lambda, s,t) +\alpha _D (\Delta \lambda, \Delta s,\Delta t)$. We choose $\alpha_P$ and $\alpha_D$ similarly to \eqref{eq: pdipstepmaxlength} and \eqref{eq: actual_pdipstepmaxlength}. Precisely, we set the maximum step for primal and dual as 
\begin{equation*}
\begin{aligned}\label{eq: pdip_ub_stepmaxlength}
     \mbox{PDIP:} \quad  \alpha_P^{\max} & = \min\left( 1, \max\{\alpha\mid (x,w)+\alpha (\Delta x,\Delta w) \geq 0\}\right), \\
     \mbox{PDIP:} \quad  \alpha_D^{\max} & = \min\left( 1,\max\{\alpha\mid (s,t)+\alpha (\Delta s,\Delta t) \geq 0\}\right),
\end{aligned}
\end{equation*}
then choose the actual steplengths to be
\begin{equation*} \label{eq: pidp_ub_actual_pdipstepmaxlength}
    \mbox{PDIP:} \quad \alpha_P = \tau  \alpha_P^{\max},\quad \text{and}\quad 
    \alpha_D = \tau  \alpha_D^{\max},
\end{equation*}
for some parameter $\tau\in [0,1)$. 

\subsection{MPC} \label{sec: app.mpc}
The MPC updates the iterates $(x,w,\lambda,s,t)$ by solving the system \eqref{eq:lpupw.pdip} twice with a different right-hand side each time. We spell out the details here by following \cite[Chapter 10]{Wri97}. First, MPC obtained an affine-scaling direction $(\Delta x^{\text{aff}},\Delta w^{\text{aff}},\Delta \lambda^{\text{aff}}, \Delta s^{\text{aff}}, \Delta t^{\text{aff}})$ by solving \eqref{eq:lpupw.pdip} with $\sigma = 0$. Next, it computes primal and dual stepsize for these two directions:
\begin{equation*} \label{eq:jh1}
\begin{aligned}
  \alpha_P^{\text{aff}} & = \min\left( 1, \max\{\alpha\mid x+\alpha \Delta x^{\text{aff}} \geq 0, w+\alpha \Delta w^{\text{aff}}\geq 0 \}\right), \\
\quad  \alpha_D^{\text{aff}} & = \min\left( 1,\max\{\alpha\mid s+\alpha \Delta s^{\text{aff}} \geq 0,t+\alpha \Delta t^{\text{aff}}\geq 0\}\right).
\end{aligned}
\end{equation*}
After computing the affine-scaling stepsizes, it
computes a duality gap for the affine-scaling updates:
\begin{equation*}
    \mu_{\text{aff}} = 
    \frac{
    (x+\alpha_P^{\text{aff}}  
    \Delta x^{\text{aff}})^\top  (s+\alpha_D^{\text{aff}}\Delta s^{\text{aff}})+
    (w+\alpha_P^{\text{aff}} \Delta w^{\text{aff}})^\top (
    t+ \alpha_D^{\text{aff}} \Delta t^{\text{aff}} )
    }
    {
    n+|\cI|
    }.
\end{equation*}
Then it obtains the actual search direction $(\Delta x, \Delta w,\Delta \lambda, \Delta s,\Delta t)$ by 
solving the system \eqref{eq:lpupw.pdip} again with $\sigma$ set as follows: 
\begin{equation}
    \sigma = \left(\frac{\mu_{\text{aff}}}{\mu}\right)^3,\quad 
    \text{where}\quad \mu = \frac{x^\top s + w^\top t}{n+|\cI|}.
\end{equation}
It then sets the maximum stepsize for primal and dual as 
\begin{equation*}
\begin{aligned}\label{eq: mpcstepmaxlength}
     \mbox{MPC:} \quad  \alpha_P^{\max} & = \min\left( 1, \max\{\alpha\mid (x,w)+\alpha (\Delta x,\Delta w) \geq 0\}\right), \\
     \mbox{MPC:} \quad  \alpha_D^{\max} & = \min\left( 1,\max\{\alpha\mid (s,t)+\alpha (\Delta s,\Delta t) \geq 0\}\right),
\end{aligned}
\end{equation*}
and takes the actual step length to be
\begin{equation*} \label{eq: actual_mpcstepmaxlength}
    \mbox{MPC:} \quad \alpha_P = \tau  \alpha_P^{\max},\quad \text{and}\quad 
    \alpha_D = \tau  \alpha_D^{\max},
\end{equation*}
for some parameter $\tau\in [0,1)$. We then update the primal and dual variables as follows:
    $(x,w) +\alpha_P (\Delta x, \Delta w) $ and 
    $(\lambda, s,t) +\alpha _D (\Delta \lambda, \Delta s,\Delta t)$.

\subsection{SSV-SQP} \label{sec: app.ssv-sqp}
We first introduce the squared variables for \eqref{eq:lpupw} and obtain 
\begin{equation}\label{eq:lpupw.ssv} \tag{LP-uw-ssv}
\begin{aligned}
    \text{minimize}_{x\in \RR^n} &\quad    c^Tx  \\ 
    \text{subject to} & \quad Ax=b,  \\ 
                      & \quad u = w + x_{\cI} \\ 
                      & \quad x=v\odot v\quad \text{and} \quad w =y\odot y.
\end{aligned}
\end{equation}
KKT conditions for \eqref{eq:lpupw.ssv} is 
\begin{subequations} \label{eq:lpupw.ssv.kkt}
\begin{align}
\label{eq:lpupw.ssv.kkt.1a}
    A^T_{\cI} \lambda + s_\cI - t - c_\cI &= 0, \\
    \label{eq:lpupw.ssv.kkt.1b}
    A^T_{\cJ} \lambda + s_\cJ  - c_\cJ &= 0, \\
    \label{eq:lpupw.ssv.kkt.2a}
    Ax - b &=0, \\
    \label{eq:lpupw.ssv.kkt.2b}
    x_{\cI} +w &=u, \\
    \label{eq:lpupw.ssv.kkt.3}
    s \odot v = 0,\quad y\odot r =0 & \\
    \label{eq:lpupw.ssv.kkt.4}
    x = v\odot v, \quad w = y\odot y. &
\end{align}
\end{subequations}

Each iteration of SQP for \eqref{eq:lpupw.ssv.kkt} aims to solve:
\begin{subequations} \label{eq:lpupw.ssv.sqp}
\begin{align}
\label{eq:lpupw.ssv.sqp.1a}
    A^T_{\cI} \Delta \lambda + 
    \Delta s_\cI - \Delta t &= -r_{c_\cI} :\,= c_\cI -s_\cI +t -A_\cI \lambda , \\
    \label{eq:lpupw.ssv.sqp.1b}
    A^T_{\cJ} \Delta \lambda + 
    \Delta s_\cJ &= -r_{c_\cJ} :\,= c_\cJ -s_\cJ -A_\cJ \lambda \\
    \label{eq:lpupw.ssv.sqp.2a}
    A\Delta x  &=  -r_x:\, = b-Ax, \\
    \label{eq:lpupw.ssv.sqp.2b}
    \Delta x_{\cI} +\Delta w &=-r_u :\;= u -x_\cI -w, \\
    \label{eq:lpupw.ssv.sqp.3a}
    V\Delta s + S\Delta x  & = -r_{sv}:\;= - v\odot s,\\
     \label{eq:lpupw.ssv.sqp.3b}
    Y\Delta t + T \Delta W  & = -r_{ry}:\;= - t\odot y,\\
    \label{eq:lpupw.ssv.sqp.4a}
   \Delta x - 2v\odot \Delta v & = -r_v:\; = -x+v\odot v,\\
     \label{eq:lpupw.ssv.sqp.4b}
    \Delta w + 2 y \odot \Delta y & = -r_{y}:\;= -w + y\odot y.
 \end{align}
\end{subequations}
where $V = \diag(v)$, $S = \diag(s)$, $Y= \diag(y)$, and $T= \diag(t)$.
We maintain strict positivity of all components of $v$, $s$, $y$, and $t$ at all iterations.

Let $r_{vwr} \in \RR^n$ such that 
\begin{equation} 
r_{vwr,\cI} = V_{\cI}^{-1} r_{sv,\cI} -Y^{-1} r_{ry}+\tfrac{1}{2} V_{\cI}^{-2}S_{\cI} r_{v,\cI} -\tfrac{1}{2} Y^{-2}T r_y+\tfrac{1}{2}Y^{-2}T r_u -r_{c_\cI},
\end{equation}
and 
\begin{equation} 
r_{vwr,\cJ} = V_{\cJ}^{-1} r_{sv,\cJ} + \tfrac{1}{2}V_{\cJ}^{-2}S_{\cJ}r_{v,\cJ}-r_{c_\cJ}.
\end{equation}
Here $V_{\cI} = \diag(v_{\cI})$, $V_{\cJ} = \diag(v_{\cI})$, $S_{\cI} = \diag(s_{\cI})$, and $S_{\cJ} =\diag(s_{\cJ})$.
Let $D\in \RR^{n\times n}$ be the diagonal matrix whose diagonal blocks are 
\begin{equation}
    D_{\cI,\cI} = \tfrac12 V_{\cI}^{-2}S_{\cI} + \tfrac12 Y^{-2}T,\quad \text{and}\quad  D_{\cJ,\cJ} = \tfrac12 V_{\cJ}^{-2} S_{\cJ}. 
\end{equation}
Here $D_{\cI,\cI}$ is the block corresponding to the index set $\cI$ and $D_{\cJ,\cJ}$ is the block corresponding to the index set $\cJ$. 

After a series of elimination for \eqref{eq:lpupw.ssv.sqp}, we obtain $\Delta x$ and $\Delta \lambda$ by solving 
\begin{equation}\label{eq:lpupw.ssv.sqp.linearsolve1}
    \begin{bmatrix}
        -D &  A^\top \\
        A &  0 
    \end{bmatrix}
    \begin{bmatrix}
        \Delta x \\ 
        \Delta \lambda 
    \end{bmatrix} = 
    \begin{bmatrix}
        r_{vwr} \\ 
        -r_x
    \end{bmatrix}.
\end{equation}
One can recover the other step components.
Reducing further, we obtain the system 
\begin{equation}\label{eq:lpupw.ssv.sqp.linearsolve2}
    AD^{-1}A^\top \Delta \lambda = AD^{-1}r_{xwr} -r_x.
\end{equation}
We use \eqref{eq:lpupw.ssv.sqp.linearsolve1} in the experiments for the Netlib data set because this system yields more accurate solutions in a finite-precision setting than \eqref{eq:lpupw.ssv.sqp.linearsolve2}.
We update primal and dual variables via 
    $(x,w,y,v) +\alpha_P (\Delta x, \Delta w,\Delta y,\Delta v) $ and 
    $(\lambda, s,t) +\alpha _D (\Delta \lambda, \Delta s,\Delta t)$. The actual stepsizes $\alpha_P$ and $\alpha_D$ are chosen similarly to \eqref{eq: sqp_stepmaxlength} and \eqref{eq: actual_sqpstepmaxlength}. We set the maximum step for primal and dual as 
\begin{equation*}
\begin{aligned}\label{eq: ssv.ub.sqp_stepmaxlength}
    \mbox{SSV-SQP:} \quad \alpha_P^{\max} & = \min\left( 1, \max\{\alpha\mid (v,y)+\alpha (\Delta v,\Delta y) \geq 0\}\right), \\
    \mbox{SSV-SQP:} \quad  \alpha_D^{\max} & = \min\left( 1,\max\{\alpha\mid (s,t)+\alpha (\Delta s ,\Delta t) \geq 0\}\right),
\end{aligned}
\end{equation*}
with actual step lengths defined as
\begin{equation*} \label{eq: ssv.ub.actual_sqpstepmaxlength}
    \mbox{SSV-SQP:} \quad \alpha_P = \tau \alpha_P^{\max},\quad \text{and}\quad 
    \alpha_D = \tau \alpha_D^{\max},
\end{equation*}
where $\tau \in (0,1]$ is a fixed parameter.
%\fi

\section{Results of SSV-SQP and MPC for each problem in Netlib}
\label{sec: netlib_MPCvsSSV} Table~\ref{tb: netlib_MPCvsSSV} shows the number of iterations for each problem in Netlib of SSV-SQP with $\tau =  0.5,\;0.75.\;0.9$ and MPC with $\tau = 0.9$ for accuracy parameters $10^{-2}$ and $10^{-5}$.
The problem dimensions $(m,n)$ are for the presolved versions, obtained from the presolver of PCx~\cite{czyzyk1999pcx}.

{
\small
{
\begin{longtable} {|c|c||*{3}{c}|c||*{3}{c}|c|} 
    \caption{The number of iterations of SSV-SQP (with $\tau = .5,.75,.9$) and MPC (with $\tau = .9$) for different problems in Netlib in terms of accuracy levels $\epsilon = 10^{-2}$ and $\epsilon = 10^{-5}$. For each value of $\epsilon$, we display the number of iterations for SSV-SQP (with $\tau = .5,.75,.9$) and MPC (with $\tau = .9$) from left to right. $*$ indicates the algorithm cannot attain the desired accuracy within $2000$ iterations or $1000$ seconds in terms of CPU time. For example, for $\epsilon = 10^{-2}$ and the instance 25fv47, we have 
    the number of iterations for SSV-SQP with $\tau = .5,.75,.9$ being 65, 66, and 57, and the number of iterations for MPC with $\tau = .9$ is 25. Problem names displayed with a superscript "$+$" are those for which SSV-SQP solves the problem successfully for $\epsilon=10^{-5}$ when $\tau=.15$, though unsuccessful for larger values of $\tau$.\label{tb: netlib_MPCvsSSV}}
    \\
     \hline
    \multicolumn{1}{|c|}{Name} & \multicolumn{1}{c||}{$(n,m)$} & 
     \multicolumn{4}{c||}{$\epsilon = 10^{-2}$} & \multicolumn{4}{c|}{$\epsilon=10^{-5}$}\\
     \hline 
     \hline
25fv47 & (1843,788) & 65 & 66 & 57 & 25 & 120 & $*$ & $*$ & 34 \\ 
\hline 
80bau3b$^+$ & (11066,2140) & 206 & 136 & 131 & $*$ & $*$ & $*$ & $*$ & $*$ \\ 
\hline 
adlittle & (137,55) & 44 & 29 & 24 & 18 & 54 & 35 & $*$ & 22 \\ 
\hline 
afiro & (51,27) & 34 & 21 & 17 & 14 & 47 & 29 & 23 & 17 \\ 
\hline 
agg2 & (750,514) & 93 & 63 & 52 & 27 & 109 & 76 & 67 & 33 \\ 
\hline 
agg3 & (750,514) & 86 & 60 & 52 & 26 & 102 & 71 & 68 & 32 \\ 
\hline 
agg & (477,390) & 68 & 48 & 43 & 25 & 80 & 56 & 51 & 31 \\ 
\hline 
bandm & (395,240) & 69 & 51 & 48 & 18 & 83 & 61 & $*$ & 24 \\ 
\hline 
beaconfd & (171,86) & 54 & 33 & 27 & 16 & 64 & 38 & 31 & 21 \\ 
\hline blend & (111,71) & 40 & 26 & 24 & 14 & 55 & 36 & $*$ & 18 \\ 
\hline 
bnl1$^+$ & (1491,610) & 54 & 39 & 35 & 28 & $*$ & $*$ & $*$ & 40 \\ 
\hline 
bnl2 & (4008,1964) & 103 & 125 & 154 & 30 & 214 & 185 & $*$ & 44 \\ 
\hline 
boeing1 & (697,331) & 49 & 37 & 35 & 19 & 99 & 69 & $*$ & 27 \\ 
\hline 
boeing2 & (264,125) & 60 & 49 & 49 & 20 & $*$ & $*$ & $*$ & 27 \\ \hline bore3d & (138,81) & 49 & 33 & 29 & 21 & 63 & 41 & 33 & 24 \\ \hline brandy & (238,133) & 56 & 39 & 37 & 19 & 82 & 59 & $*$ & $*$ \\ \hline capri & (436,241) & 100 & 84 & 92 & 23 & 117 & 97 & $*$ & 29 \\ \hline cycle & (2780,1420) & $*$ & 129 & $*$ & 33 & $*$ & $*$ & $*$ & $*$ \\ \hline czprob & (2779,671) & 51 & 37 & 34 & 33 & 65 & 46 & 41 & 40 \\ \hline d2q06c & (5728,2132) & 83 & 66 & 96 & 30 & 132 & $*$ & $*$ & $*$ \\ \hline d6cube & (5443,403) & 37 & 24 & 20 & 16 & 72 & 53 & 53 & 32 \\ \hline degen2 & (757,444) & 28 & 18 & 14 & 14 & 51 & 34 & 30 & 21 \\ \hline degen3 & (2604,1503) & 32 & 21 & 18 & 17 & 63 & $*$ & 41 & 25 \\ \hline dfl001 & (12143,5984) & $*$ & $*$ & $*$ & $*$ & $*$ & $*$ & $*$ & $*$ \\ \hline e226 & (429,198) & 41 & 26 & 22 & 17 & 67 & 48 & 43 & 24 \\ 
\hline 
etamacro$^+$ & (669,334) & 46 & 31 & 27 & 19 & $*$ & $*$ & $*$ & 29 \\ \hline fffff800 & (826,322) & 87 & 66 & 61 & 33 & 114 & $*$ & $*$ & 39 \\ 
\hline 
finnis$^+$ & (935,438) & 84 & 67 & 69 & 26 & $*$ & $*$ & $*$ & 31 \\ \hline fit1d & (1049,24) & 36 & 22 & 19 & 19 & 50 & 32 & 27 & 26 \\ \hline fit1p & (1677,627) & 94 & 85 & $*$ & 19 & 104 & 95 & $*$ & 24 \\ \hline fit2d & (10524,25) & 40 & 25 & 24 & 20 & 56 & 40 & 43 & 28 \\ \hline fit2p & (13525,3000) & 67 & 72 & 96 & 20 & 86 & $*$ & $*$ & 26 \\ \hline forplan & (447,121) & 53 & 33 & 31 & 22 & 82 & 57 & 53 & $*$ \\ \hline ganges & (1510,1113) & 52 & 35 & 32 & 18 & 96 & 68 & 63 & 23 \\ \hline gfrd-pnc & (1134,590) & 76 & 60 & 59 & 22 & 157 & 123 & 122 & 26 \\ 
\hline 
greenbea & (4164,1933) & 101 & $*$ & $*$ & 40 & $*$ & $*$ & $*$ & 47 \\ 
\hline 
greenbeb$^+$ & (4155,1932) & 107 & 83 & 83 & 42 & $*$ & $*$ & $*$ & $*$ \\ \hline grow15 & (645,300) & $*$ & $*$ & $*$ & 33 & $*$ & $*$ & $*$ & 38 \\ \hline grow22 & (946,440) & $*$ & $*$ & $*$ & 34 & $*$ & $*$ & $*$ & 39 \\ \hline grow7 & (301,140) & $*$ & $*$ & $*$ & 32 & $*$ & $*$ & $*$ & 36 \\ \hline israel & (316,174) & 85 & 66 & 64 & 25 & 107 & $*$ & $*$ & 29 \\ \hline kb2 & (68,43) & 37 & 26 & 23 & 15 & 51 & 32 & 26 & 19 \\ \hline lotfi & (346,133) & 69 & 50 & 48 & 17 & 87 & 63 & $*$ & 25 \\ \hline maros-r7 & (7440,2152) & 98 & 110 & 132 & 18 & 127 & 134 & 153 & 25 \\ \hline maros & (1437,655) & 56 & 35 & 28 & $*$ & 105 & $*$ & $*$ & $*$ \\ \hline modszk1 & (1599,665) & 168 & 133 & 109 & 22 & 280 & 267 & $*$ & 29 \\ 
\hline 
nesm$^+$ & (2922,654) & 51 & 33 & 37 & 24 & $*$ & $*$ & $*$ & 34 \\ 
\hline 
perold & (1462,593) & 118 & 83 & 73 & $*$ & 154 & $*$ & $*$ & $*$ \\ 
\hline 
pilot-ja$^+$ & (1892,810) & 88 & 65 & 63 & $*$ & $*$ & $*$ & $*$ & $*$ \\ 
\hline 
pilot-we & (2894,701) & 124 & 94 & 143 & $*$ & 145 & 121 & $*$ & $*$ \\ \hline pilot4 & (1110,396) & 132 & 96 & 88 & 24 & 183 & 134 & $*$ & 34 \\ \hline pilot87 & (6373,1971) & 140 & 112 & 93 & 27 & 275 & 267 & $*$ & 35 \\ \hline pilot & (4543,1368) & 95 & 67 & 59 & 30 & 215 & 170 & $*$ & 46 \\ \hline recipelp & (123,64) & 34 & 16 & 12 & 11 & 45 & 27 & 21 & 15 \\ \hline sc105 & (162,104) & 33 & 19 & 14 & 13 & 57 & 35 & 27 & 18 \\ \hline sc205 & (315,203) & 42 & 28 & 23 & 14 & 80 & 54 & 42 & 20 \\ \hline sc50a & (77,49) & 35 & 24 & 19 & 13 & 49 & 31 & 25 & 17 \\ \hline sc50b & (76,48) & 32 & 21 & 17 & 13 & 44 & 26 & 20 & 17 \\ \hline scagr25 & (669,469) & 52 & 35 & 30 & 19 & 64 & 43 & 36 & 24 \\ \hline scagr7 & (183,127) & 47 & 36 & 31 & 18 & 64 & 44 & 36 & 22 \\ \hline scfxm1 & (568,305) & 67 & 52 & 55 & 22 & 86 & 65 & $*$ & 28 \\ \hline scfxm2 & (1136,610) & 81 & 64 & 58 & 23 & 105 & 89 & $*$ & 31 \\ \hline scfxm3 & (1704,915) & 80 & 62 & 59 & 23 & 108 & 94 & 96 & 31 \\ \hline scorpion & (412,340) & $*$ & $*$ & $*$ & $*$ & $*$ & $*$ & $*$ & $*$ \\ \hline scrs8 & (1199,421) & 141 & 123 & $*$ & 22 & 198 & $*$ & $*$ & 30 \\ \hline scsd1 & (760,77) & 30 & 20 & 16 & 10 & 45 & $*$ & $*$ & 15 \\ \hline 
scsd6 & (1350,147) & 31 & 21 & 18 & 10 & 46 & 30 & $*$ & 17 \\ 
\hline 
scsd8$^+$ & (2750,397) & 32 & 22 & 18 & 12 & $*$ & $*$ & $*$ & 16 \\ 
\hline 
sctap1$^+$ & (644,284) & 56 & 40 & 39 & 18 & $*$ & $*$ & $*$ & 24 \\ 
\hline 
sctap2 & (2443,1033) & 50 & 38 & 41 & 17 & 128 & $*$ & $*$ & 22 \\ 
\hline 
sctap3$^+$ & (3268,1408) & 52 & 40 & 42 & 18 & $*$ & $*$ & $*$ & 23 \\ 
\hline 
seba & (901,448) & 53 & 37 & 33 & 20 & 68 & 45 & 39 & 23 \\ 
\hline 
share1b & (248,112) & 55 & 40 & 38 & 22 & 76 & 53 & $*$ & 28 \\ 
\hline 
share2b$^+$ & (162,96) & 33 & 22 & 18 & 15 & $*$ & $*$ & $*$ & 19 \\ 
\hline shell & (1451,487) & 93 & 78 & 79 & 31 & 106 & $*$ & $*$ & 35 \\ 
\hline 
ship04l & (1905,292) & 84 & 94 & 177 & 18 & 94 & 99 & $*$ & 22 \\ 
\hline 
ship04s & (1281,216) & 70 & 68 & 167 & 18 & 81 & 74 & $*$ & 22 \\ 
\hline 
ship08l & (3121,470) & 105 & 120 & 230 & 19 & 122 & 129 & $*$ & $*$ \\ 
\hline 
ship08s & (1604,276) & 65 & 55 & 125 & 18 & 77 & 60 & $*$ & $*$ \\ 
\hline 
ship12l & (4171,610) & 107 & 103 & 183 & 19 & 120 & $*$ & $*$ & 24 \\ 
\hline 
ship12s & (1943,340) & 65 & 59 & 74 & 18 & 79 & 67 & $*$ & 23 \\ 
\hline 
sierra & (2705,1212) & $*$ & $*$ & $*$ & $*$ & $*$ & $*$ & $*$ & $*$ \\ \hline 
stair & (538,356) & 44 & 26 & 19 & 16 & 79 & 49 & 40 & 23 \\ 
\hline 
standata & (796,314) & 61 & 46 & 52 & 23 & 80 & $*$ & $*$ & 27 \\ 
\hline 
standgub & (796,314) & 61 & 46 & 52 & 23 & 80 & $*$ & $*$ & 27 \\ 
\hline 
standmps$^+$ & (1192,422) & 66 & 49 & $*$ & 29 & $*$ & $*$ & $*$ & 42 \\ 
\hline 
stocfor1 & (150,102) & 40 & 26 & 22 & 18 & 55 & $*$ & $*$ & 22 \\ 
\hline 
stocfor2 & (2868,1980) & 71 & 60 & 62 & 22 & 112 & $*$ & $*$ & 29 \\ 
\hline 
stocfor3 & (22228,15362) & $*$ & $*$ & $*$ & 27 & $*$ & $*$ & $*$ & $*$ \\ 
\hline 
truss$^+$ & (8806,1000) & 45 & 34 & 31 & 16 & $*$ & $*$ & $*$ & 23 \\ 
\hline 
tuff & (567,257) & 45 & 30 & 26 & 18 & 111 & 78 & $*$ & 28 \\ 
\hline 
vtp-base & (111,72) & 44 & 30 & 25 & 17 & 54 & 35 & 28 & 20 \\ 
\hline 
wood1p & (1718,171) & 51 & 31 & 25 & 24 & 76 & 49 & 43 & 41 \\ 
\hline 
woodw & (5364,708) & 27 & 17 & 14 & 18 & 66 & 48 & 45 & 31 \\ 
\hline  
\end{longtable}
}
}

\end{document}